\newtheorem{thm}{Theorem}[section]
\newtheorem{lemma}[thm]{Lemma}
\newtheorem{remark}{Remark}
\def\Ai{\mbox{Ai}}
\def\bpm{\left(\begin{array}{cc}}
\def\epm{\end{array}\right)}
\def\eq{\begin{equation}}
\def\endeq{\end{equation}}
\def\eqarr{\begin{eqnarray}}
\def\endeqarr{\end{eqnarray}}
\begin{document}
\title[Total integrals of global solutions to Painlev\'e II]{Total integrals of global solutions to Painlev\'e II}

\author{Jinho Baik$^1$, Robert Buckingham$^2$, 
Jeffery DiFranco$^3$ and Alexander Its$^4$}

\address{$^1$ Department of Mathematics, University of Michigan, Ann Arbor, MI, 48109}
\address{$^2$ Centre de Recherches Math\'ematiques, Universit\'e de
Montr\'eal, Montr\'eal, QC, H3C~3J7}
\address{$^3$ Department of Mathematics, Seattle University, 
Seattle, WA, 98122}
\address{$^4$ Department of Mathematical Sciences, IUPUI, 
Indianapolis IN, 46202}
\ead{baik@umich.edu, buckingham@crm.umontreal.ca, difranco@seattleu.edu, and itsa@math.iupui.edu}
\begin{abstract}
We evaluate the total integral from negative infinity to positive infinity of 
all global solutions to the Painlev\'e II equation on the real line.  The 
method is based on the interplay between one of the equations of the 
associated Lax pair and the corresponding Riemann-Hilbert problem.  In 
addition, we evaluate the total integral of a function related to a special 
solution to the 
Painlev\'e V equation.  As a corollary, we obtain short proofs of the 
computation of the constant terms of the limiting gap probabilities in the 
edge and the bulk of the Gaussian Orthogonal and Gaussian Symplectic 
Ensembles that were obtained recently in \cite{Baik:2007} and 
\cite{Ehrhardt:2007}.  We also evaluate the total integrals of 
certain polynomials of the Painlev\'e functions and their derivatives.
These polynomials are the densities of the first integrals of the 
modified Korteweg-de Vries equation. We discuss the relations
of the formulae we have obtained to the classical trace formulae for 
the Dirac operator on the line.
\end{abstract}
Mathematics Subject Classification: 33E17, 35Q15, 15A52

\maketitle

\section{Introduction}
\label{intro}

In this paper we compute the total integral, or integral from negative 
infinity to positive infinity, of all global solutions to the Painlev\'e II 
equation on the real line (modulo an additve factor of 
$2\pi \rmi\mathbb{Z}$ for one case;  see Theorem \ref{pi-sing-thm}).  If the 
solutions do not decay sufficiently fast as $x\to\pm\infty$ then 
appropriate terms from the asymptotic expansion of the solution are 
subtracted off to make the integral convergent.  One of the motivations is 
to give a new, short proof of the constant terms (first computed in 
\cite{Baik:2007}) in the asymptotic expansions of the distributions of the 
largest eigenvalue of a GOE or GSE matrix in the edge scaling limit.  This 
employs the total integral of the special Hastings-McLeod solution (see 
Theorem \ref{hm-theorem}).  In addition, in Section \ref{sine-kernel} we 
compute the total integral of a function related to a special solution of the 
Painlev\'e V equation.  
This allows us to give a short proof of the constant terms (first computed 
in \cite{Ehrhardt:2007}) in the asymptotic expansions of the limiting gap 
probabilities in the bulk for a GOE or GSE matrix. In the last two sections
we evaluate the total integrals of the polynomials of the Painlev\'e functions and their derivatives
that are produced  by the densities of the first integrals of the 
modified Korteweg-de Vries equation. The evaluation of these
integrals, although much simpler than the evaluation of the
total integrals of the Painelv\'e functions themselves,
allows us to introduce the Painlev\'e analogs of the classical
trace formulae of the scattering theory (see equations (\ref{ptrace5})
in Section \ref{trace}).

The homogenous Painlev\'e II equation 
\eq
\label{pII}
u_{xx}(x) = 2u^3(x) + xu(x)
\endeq
can be solved via a certain 
Riemann-Hilbert problem (see \cite{Fokas:2006-book};  also see 
\cite{Flaschka:1980, Jimbo:1981, Fokas:1983, Its:1986-book} for the 
derivation and for the history of the subject).  Define the six rays 
$\gamma_k:=\{\rme^{\rmi(2k-1)\pi/6}\mathbb{R}^+\}, \,\, k=1,\dots,6$ oriented 
outwards from 0 in the complex plane.  On each $\gamma_k$ define the jump 
matrix $S_k$ as shown in figure \ref{pII-rhp}.  The complex constants $s_1$, 
$s_2$, and $s_3$ satisfy 
\eq
\label{pqr-relation}
s_1 - s_2 + s_3 + s_1s_2s_3 = 0.
\endeq
\begin{figure}[h*]
\begin{center}
\epsfig{file=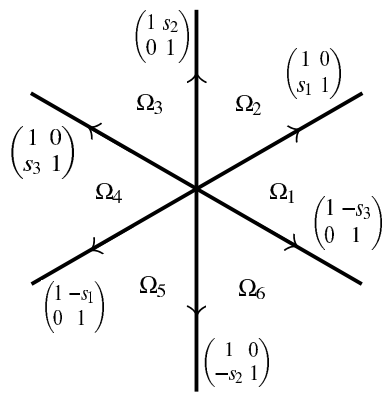, height=2in}
\caption{The Riemann-Hilbert problem for Painlev\'e II.}
\label{pII-rhp}
\end{center}
\end{figure}
Solving the Riemann-Hilbert problems means finding a 2$\times$2 matrix 
valued function $\Psi(\lambda;x)$ such that
\eq
\label{rhp}
\cases{
\Psi(\lambda;x) \mbox{ is analytic for } \lambda\notin\gamma_k, \quad k=1,\dots,6 \\
\Psi_+(\lambda;x) \mbox{ and } \Psi_-(\lambda;x) \mbox{ are continuous for } \lambda\in\gamma_k, \quad k=1,\dots,6\\
\Psi_+(\lambda;x) = \Psi_-(\lambda;x) S_k \mbox{ on } \gamma_k, \mbox{ with } S_k \mbox{ defined in figure \ref{pII-rhp}} \\
\Psi(\lambda;x)\rme^{\theta(\lambda;x)\sigma_3} = I + O\left(\frac{1}{\lambda}\right) \mbox{ as } \lambda\to\infty.
}
\endeq
Here $\Psi_+(\lambda;x)$ and $\Psi_-(\lambda;x)$ denote the nontangential 
limits of $\Psi(\lambda;x)$ from the left and right sides of the jump contour, 
respectively, and
\eq
\theta(\lambda;x) := \rmi\left(\frac{4}{3}\lambda^3 + x\lambda\right).
\endeq
If $\Psi$ exists, 
\eq
\label{u-from-Psi}
u(x) := 2\lim_{\lambda\to\infty}(\lambda\Psi_{12}(\lambda;x)\rme^{-\theta(\lambda;x)})
\endeq
is a solution of \eref{pII}.  Indeed, the Riemann-Hilbert problem is always uniquely solvable 
(the solution is a meromorphic function of $x$) and the map
\eq
\{s_{1}, s_{2}, s_{3}\} \to \{\mbox{set of all solutions of (\ref{pII})}\},
\endeq
defined by formula (\ref{u-from-Psi}), is a bijection
(see Theorem 3.4, Theorem 4.2, and Corollary 4.4 in \cite{Fokas:2006-book}).  

We define the 
Pauli matrices as
\eq
\sigma_1:=\bpm 0 & 1 \\ 1 & 0 \epm, \quad \sigma_2:=\bpm 0 & -\rmi \\ \rmi & 0 \epm, \quad \sigma_3:=\bpm 1 & 0 \\ 0 & -1 \epm.
\endeq
It is easy to check from the Riemann-Hilbert problem that  $\Psi(\lambda;x)$ satisfies the Lax pair
\eq\label{Lax-lambda}
\frac{\partial}{\partial\lambda}\Psi = (-\rmi(4\lambda^2+x+2u^2)\sigma_3 - 4u\lambda\sigma_2 - 2v\sigma_1)\Psi
\endeq
\eq
\label{Lax-x}
\frac{\partial}{\partial x}\Psi = (-\rmi\lambda\sigma_3 - u\sigma_2)\Psi.
\endeq
Here the function $v(x)$ satisfies $v(x) = u_{x}(x)\equiv \rmd u(x)/\rmd x \equiv u'(x)$.  The 
Painleve II equation \eref{pII} is indeed the compatability condition for this 
overdetermined system{\footnote[1]{Note that here we use the Lax pair
from p. 174 of  \cite{Fokas:2006-book}. This Lax pair differs from the original Lax
pair suggested in \cite{Flaschka:1980} and reproduced on page 161 of \cite{Fokas:2006-book}
by a matrix conjugation with the matrix $\exp(\rmi\frac{\pi}{4}\sigma_{3})$.}}.  

Let $\Psi_k(\lambda;x)$ indicate the function $\Psi(\lambda;x)$ restricted 
to $\lambda\in\Omega_k$, where the regions $\Omega_k$ are defined in figure 
\ref{pII-rhp}.  The $x$ differential equation \eref{Lax-x} is particularly simple when $\lambda=0$:\begin{equation}\label{eq:Lax0}
    \frac{\rmd}{\rmd x} P(x) = -u(x) \sigma_2 P(x), \qquad P(x):=\lim_{\lambda\to 0}\Psi_k(\lambda;x)
\end{equation}
for some $k$, where $\lambda$ approaches 0 in $\Omega_k$.  This limit is well defined since $\Psi_k(\lambda;x)$ takes 
continuous boundary values.  The general solution of~\eref{eq:Lax0} is  
\begin{equation}
\label{P}
P(x)= \rme^{-U(a,x)\sigma_2}P(a) = \bpm \cosh U(a,x) & \rmi\sinh U(a,x) \\ -\rmi\sinh U(a,x) & \cosh U(a,x) \epm P(a),
\end{equation}
where $a$ is a constant and 
\begin{equation}
    U(a,x):=\int_a^x u(y)\rmd y.
\end{equation}
Hence 
\begin{equation}
    \bpm \cosh U(a,x) & \rmi\sinh U(a,x) \\ -\rmi\sinh U(a,x) & \cosh U(a,x) \epm 
    =P(x)P(a)^{-1}.
\end{equation}
If we take $x\to +\infty$ and $a\to-\infty$, this yields a relation between the total integral $\int_{-\infty}^\infty u(y)\rmd y$ and the solution of the Riemann-Hilbert problem~\eref{rhp}. Therefore, by analyzing the Riemann-Hilbert problem asymptotically as $x\to\pm\infty$ using the Deift-Zhou steepest-descent method, we can compute the total integral.

The asymptotic analysis as $x\to\pm\infty$ for the 
Painlev\'e II Riemann-Hilbert problem has been worked out in \cite{Deift:1995}
and \cite{Fokas:2006-book}{\footnote{We refer to the introduction of \cite{Fokas:2006-book}
for a detailed historic review on the asymptotic analysis of the Painlev\'e
equations via the Riemann-Hilbert-isomonodromy method.}}.  Most of the asymptotic analysis we will need in this 
paper is carried out in these references with the exception of the $O(x^{-1})$ term in the generic purely 
imaginary global solutions, which we compute in Section 
\ref{direct-computation}.  Nevertheless, the value of the solution at $z=0$, $P(x):=\lim_{\lambda\to 0}\Psi_k$, has not been specifically addressed  before, and in the subsequent sections we compute this term explicitly. We adopt the notation in \cite{Fokas:2006-book} 
except when computing the total integral of the Hastings-McLeod solutions, 
when it is convenient to follow \cite{Deift:1995} where the original
Riemann-Hilbert setting of \cite{Flaschka:1980} is used.  We remark that the monodromy data 
$(p,q,r)$, jump matrix $V_{\rm{DZ}}$, and solution $m^{(1)}$ to the 
Riemann-Hilbert problem in \cite{Deift:1995} are related to those in 
\cite{Fokas:2006-book} by 
\eq
\label{Deift-to-Fokas}
\eqalign
{p = \rmi s_3, \quad q=\rmi s_1, \quad r=-\rmi s_2, \cr V_{\rm{DZ}} = \rme^{-\rmi\pi\sigma_3/4}\rme^{-\theta\sigma_3}V\rme^{\theta\sigma_3}\rme^{\rmi\pi\sigma_3/4}, \cr m^{(1)} = \rme^{-\rmi\pi\sigma_3/4}\Psi \rme^{\theta\sigma_3}\rme^{+\rmi\pi\sigma_3/4}.}
\endeq
Note that the phase factor $\rme^{\theta\sigma_3}$ appears in the normalization 
condition in \cite{Fokas:2006-book} and in the jump matrices in 
\cite{Deift:1995}.

We conclude the introduction with the following useful observation that relates 
the solution corresponding to monodromy data $(s_1,s_2,s_3)$ to the solution 
corresponding to monodromy data $(-s_1,-s_2,-s_3)$ (cf. \cite{kapaev91} and
Chapter 11 of \cite{Fokas:2006-book}).
\begin{lemma}
\label{symmetry-lemma}
If $u(x;s_1,s_2,s_3)$ is the solution to \eref{pII} with monodromy data 
$(s_1,s_2,s_3)$, then 
\eq
u(x;s_1,s_2,s_3) = -u(x;-s_1,-s_2,-s_3).
\endeq
\end{lemma}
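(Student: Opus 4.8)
The plan is to exploit a discrete symmetry of the Riemann--Hilbert problem \eref{rhp} induced by conjugation with $\sigma_3$. Suppose $\Psi(\lambda;x)$ is the unique solution of \eref{rhp} with monodromy data $(s_1,s_2,s_3)$, and define
\eq
\tilde\Psi(\lambda;x) := \sigma_3\Psi(\lambda;x)\sigma_3.
\endeq
First I would check that $\tilde\Psi$ satisfies a Riemann--Hilbert problem of exactly the same form but with the negated data. Since $\sigma_3$ is a constant invertible matrix, $\tilde\Psi$ is analytic off the rays $\gamma_k$ and takes continuous boundary values there. Across each $\gamma_k$ the jump relation $\Psi_+ = \Psi_- S_k$ transforms into $\tilde\Psi_+ = \tilde\Psi_-(\sigma_3 S_k\sigma_3)$, and because each $S_k$ is unipotent and triangular with its single off-diagonal entry drawn from $\{s_1,s_2,s_3\}$, conjugation by $\sigma_3$ merely reverses the sign of that entry. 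Hence $\sigma_3 S_k\sigma_3$ is precisely the jump matrix obtained by the replacement $(s_1,s_2,s_3)\mapsto(-s_1,-s_2,-s_3)$. One also verifies directly that the constraint \eref{pqr-relation} is invariant under this sign change, so the negated data is admissible.

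Next I would confirm the normalization at infinity is preserved. Because $\sigma_3$ commutes with $\rme^{\theta(\lambda;x)\sigma_3}$ and $\sigma_3 I\sigma_3 = I$, we have
\eq
\tilde\Psi(\lambda;x)\rme^{\theta(\lambda;x)\sigma_3} = \sigma_3\left(\Psi(\lambda;x)\rme^{\theta(\lambda;x)\sigma_3}\right)\sigma_3 = \sigma_3\left(I + O(\lambda^{-1})\right)\sigma_3 = I + O(\lambda^{-1})
\endeq
as $\lambda\to\infty$. Thus $\tilde\Psi$ solves \eref{rhp} with data $(-s_1,-s_2,-s_3)$, and by the unique solvability of the problem quoted in the excerpt it is \emph{the} solution associated to that data.

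Finally I would read off the effect on the Painlev\'e function via \eref{u-from-Psi}. Conjugation by $\sigma_3$ flips the sign of the off-diagonal entries, so $\tilde\Psi_{12} = -\Psi_{12}$, whence
\eq
u(x;-s_1,-s_2,-s_3) = 2\lim_{\lambda\to\infty}\left(\lambda\tilde\Psi_{12}(\lambda;x)\rme^{-\theta(\lambda;x)}\right) = -2\lim_{\lambda\to\infty}\left(\lambda\Psi_{12}(\lambda;x)\rme^{-\theta(\lambda;x)}\right) = -u(x;s_1,s_2,s_3),
\endeq
which, after rearranging, is the claimed identity. I expect the only point requiring genuine care to be the bookkeeping in the jump-matrix step: one must confirm ray by ray, using the explicit assignment of $s_1,s_2,s_3$ to the six contours in figure \ref{pII-rhp}, that $\sigma_3 S_k\sigma_3$ really reproduces the $S_k$ of the negated problem rather than some permutation of the data. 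Once the triangular structure on each $\gamma_k$ is tracked this is immediate, and the remainder of the argument is purely formal.
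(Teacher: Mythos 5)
Your proposal is correct and is essentially identical to the paper's own argument: the paper conjugates by $\rme^{\rmi\pi\sigma_3/2}=\rmi\sigma_3$, which is the same conjugation as yours by $\sigma_3$, and the rest of the reasoning (sign flip of the off-diagonal Stokes entries, uniqueness of the Riemann--Hilbert solution, sign flip of $\Psi_{12}$ in \eref{u-from-Psi}) matches step for step. No issues.
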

\begin{proof}
Define 
\eq
\widetilde{\Psi}(\lambda;x):=\rme^{\rmi\pi\sigma_3/2}\Psi(\lambda;x)\rme^{-\rmi\pi\sigma_3/2}.  
\endeq
Then $\widetilde{\Psi}(\lambda;x)$ satisfies the Riemann-Hilbert 
problem \eref{rhp} with the jump condition replaced by 
$\widetilde{\Psi}_+(\lambda;x) = \widetilde{\Psi}_-(\lambda;x)\rme^{\rmi\pi\sigma_3/2}S_k \rme^{-\rmi\pi\sigma_3/2}$ on $\gamma_k$.  The only effect this conjugation has 
is to change $(s_1,s_2,s_3)$ to $(-s_1,-s_2,-s_3)$ in the jump matrices.  
Therefore, if $\Psi(\lambda;x,s_1,s_2,s_3)$ is the solution to the 
Riemann-Hilbert problem \eref{rhp} with monodromy data $(s_1,s_2,s_3)$, then 
$\widetilde{\Psi}(\lambda;x,s_1,s_2,s_3)$ = $\Psi(\lambda;x,-s_1,-s_2,-s_3)$
by the existence and uniqueness of the solution of the Riemann-Hilbert 
problem.  From \eref{u-from-Psi},
\eq
\eqalign{
u(x;s_1,s_2,s_3) & = 2\lim_{\lambda\to\infty}(\lambda\widetilde{\Psi}_{12}(\lambda;x,-s_1,-s_2,-s_3)\rme^{-\theta(\lambda;x)}) \cr
  & = -2\lim_{\lambda\to\infty}(\lambda\Psi_{12}(\lambda;x,-s_1,-s_2,-s_3)\rme^{-\theta(\lambda;x)}) \cr
  & = -u(x;-s_1,-s_2,-s_3),}
\endeq
as desired.
\end{proof}

The paper is organized as follows.  In Section~\ref{sec:real}, the total integrals of the purely real solutions of Painlev\'e II equation are evaluated. 
In particular, Theorem~\ref{hm-theorem} gives a new short proof of the evaluation of the constant term of the asymptotics of the GOE and GSE Tracy-Widom distribution functions in random matrix theory obtained in \cite{Baik:2007}. 
The total integrals of the purely imaginary solutions are computed in 
Section~\ref{sec:imaginary}. In Section~\ref{direct-computation}, we compute the asymptotic expansion of the generic purely imaginary solution up to $O(x^{-3/2})$ as $x\to\infty$, 
whose total integral is studied in Theorem~\ref{pi-sing-thm}.  
In Section~\ref{sine-kernel}, the total integral of a special solution to Painlev\'e V equation is computed, and a new simple proof of the constant term in the asymptotics of the gap distribution of orthogonal and symplectic ensembles of random matrix theory is given.  Finally, in the last two
sections the  total integrals of the
densities of the mKdV conservation laws evaluated for the Painlev\'e functions 
are computed  (Section~\ref{mKdV}), and the relations to the trace formulae
of the scattering theory for the Dirac operator are discussed (Section~\ref{trace}).  

\section{Purely real solutions}\label{sec:real}

A solution of Painlev\'e II is real for all real $x$ if and only if the 
monodromy data satisfy
\eq
s_3 = \overline{s_1}, \quad s_2 = \overline{s_2}.
\endeq
See, for example, page 158 in \cite{Fokas:2006-book}.  The constraint 
\eref{pqr-relation} on the monodromy data shows that if $|s_1|=1$ then 
$s_1$ must be $\pm \rmi$ and $s_2$ can be any real number.  If $|s_1|\neq 1$ 
then $s_2 = (s_1+\overline{s_1})/(1-|s_1|^2)$.  If $s_2\neq 0$, then $u(x)$ 
has infinitely many poles;  specifically (\cite{kapaev:P2}; see also page 349 in \cite{Fokas:2006-book}), for purely real solutions with $s_2\neq 0$:
\eq
u(x)\sim\pm\sqrt{x}\tan\left(\frac{\sqrt{2}}{3}x^{3/2}+O(\ln x)\right) \mbox{ as } x\to+\infty.
\endeq
Since we want to integrate $u(x)$ we will assume $s_2=0$, and thus that $s_1$ 
is purely imaginary.  If $|s_1|>1$ then $u(x)$ again has infinitely many
poles;  specifically (\cite{kapaev:P2}; see also page 349 in \cite{Fokas:2006-book}), for purely real solutions with $|s_1|>1$:
\eq
u(x)\sim\pm\sqrt{-x}\bigg/\sin\left(\frac{2}{3}(-x)^{3/2}+O(\ln(-x))\right) \mbox{ as } x\to-\infty.
\endeq
There are two cases of global purely real solutions:
\begin{itemize}
\item The purely real Ablowitz-Segur solutions \cite{Segur:1977,Segur:1981} 
with monodromy data
\eq\label{abseg}
-1<\rmi s_1<1, \quad s_3=\overline{s_1}=-s_1, \quad s_2=0
\endeq
and asymptotics
\eqarr
\label{pr-as-minf}
\fl u(x) = \frac{\sqrt{-2\beta}}{(-x)^{1/4}}\cos\left(\frac{2}{3}(-x)^{3/2}+\beta\log(8(-x)^{3/2})+\phi\right) + O\left(\frac{\log(-x)}{(-x)^{5/4}}\right) \nonumber \\ \mbox{as } x\to-\infty,
\endeqarr
\eq
\label{pr-as-pinf}
\fl u(x) = \rmi s_1\Ai(x) + O\left(\frac{\rme^{-(4/3)x^{3/2}}}{x^{1/4}}\right) \quad \mbox{as } x\to+\infty.
\endeq
Here
\eq \label{pr-as-cond}
\beta:=\frac{1}{2\pi}\log(1-|s_1|^2)<0, \quad \phi:=-\frac{\pi}{4}-\arg\Gamma(\rmi\beta)-\arg s_1,
\endeq
and $\Ai(x)$ is the standard Airy function.  A representative  solution with 
$s_1=-\rmi/2$ is shown in figure \ref{as-plot}.

\item The Hastings-McLeod solutions \cite{Hastings:1980} with monodromy data
\eq
s_1 = \pm \rmi, \quad s_3=\mp \rmi, \quad s_2=0.
\endeq
and asymptotics
\eq
\label{hm-minf}
u(x) = \rmi s_1\sqrt{\frac{-x}{2}} + O((-x)^{-5/2}) \quad \mbox{as } x\to-\infty,
\endeq
\eq
\label{hm-pinf}
u(x) = \rmi s_1\Ai(x) + O\left(\frac{\rme^{-(4/3)x^{3/2}}}{x^{1/4}}\right) \quad \mbox{as } x\to+\infty.
\endeq
The solution with $s_1=-\rmi$ is shown in figure \ref{hm-plot}.
\end{itemize}
\begin{figure}[ht]
       \subfigure[The real Ablowitz-Segur solution with $s_1=-\frac{\rmi}{2}$.]{
                \label{as-plot}
                \begin{minipage}[b]{0.48\textwidth} 
                \centering
                       \includegraphics[width=2.7in]{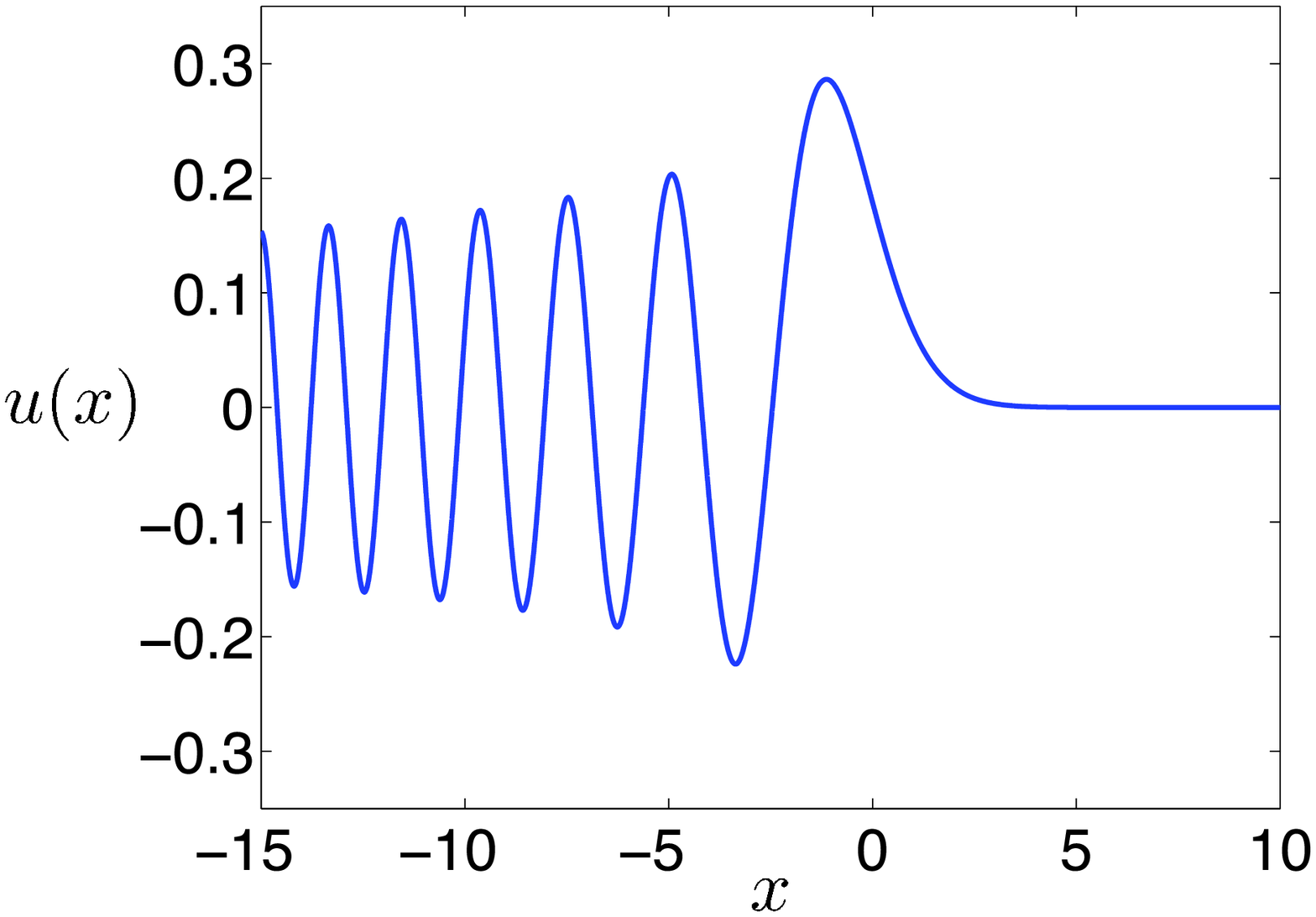}
                \end{minipage} }
        \subfigure[The Hastings-McLeod solution with $s_1=-\rmi$.] {
                \label{hm-plot}
                \begin{minipage}[b]{0.48\textwidth}
                \centering
                        \includegraphics[width=2.7in]{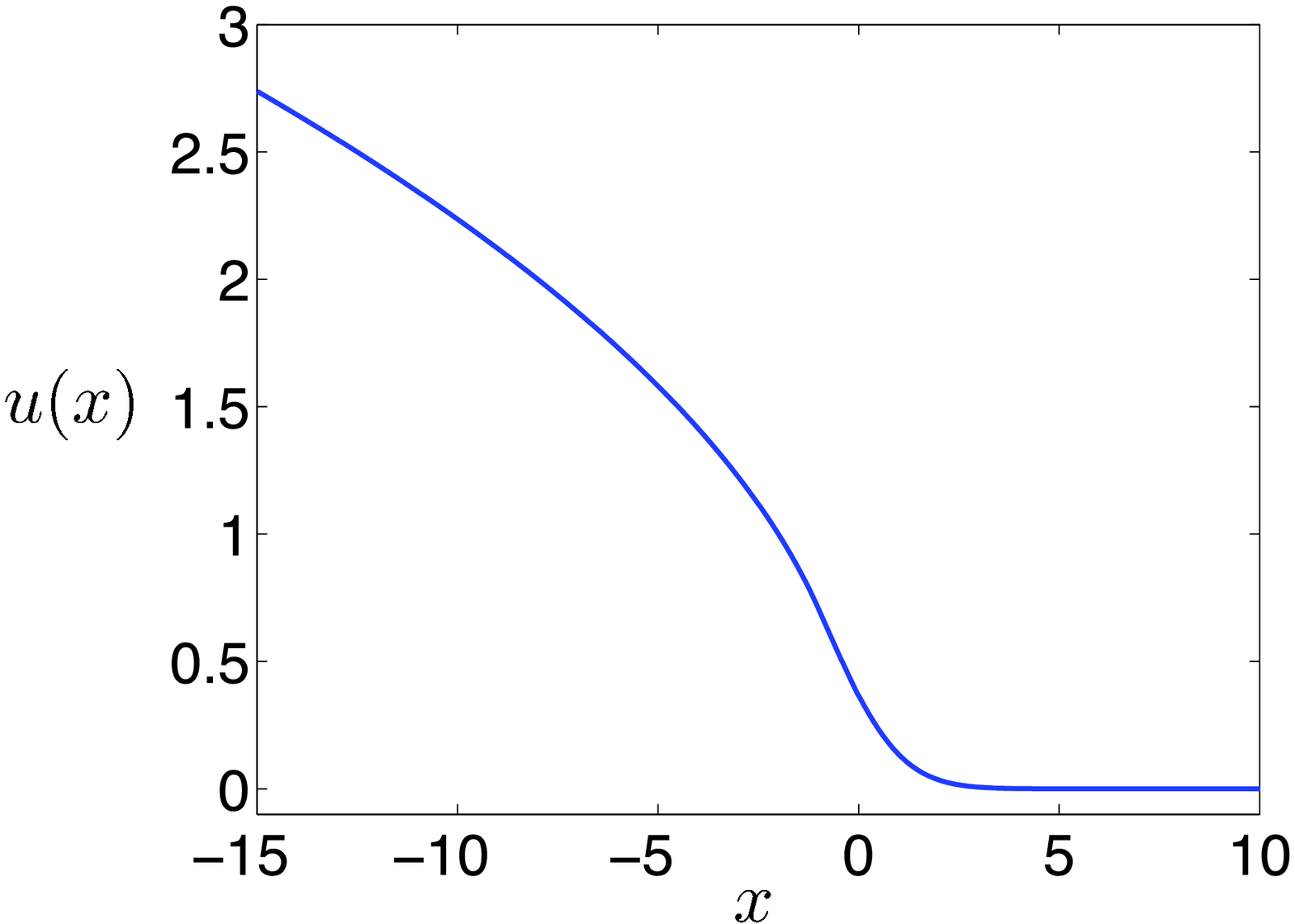}
                \end{minipage} } 
  \caption{Plots of purely real solutions to Painlev\'e II.}
  \label{hm-and-as-plots}
\end{figure}

The error estimates above come from \cite{Deift:1995}.  These solutions have 
no singularities for finite $x$ \cite{Ablowitz:1977, Hastings:1980}.  Both of 
these solutions look like the Airy function (up to a constant) as 
$x\to+\infty$.  However, as $x\to-\infty$, their asymptotic behaviors differ 
dramatically:  the Ablowitz-Segur solutions decay, whereas the Hastings-McLeod 
solutions grow.  We begin with the total integral for the 
Ablowitz-Segur solutions.
\begin{thm} \label{pr-as-theorem} {\bf [Purely real Ablowitz-Segur solutions]}  
Suppose that 
$u(x)$ is a solution to the Painlev\'e II equation \eref{pII} with monodromy 
data $-1<\rmi s_1<1$, $s_3=\overline{s_1}=-s_1$, $s_2=0$ (that is, with 
asymptotics given by \eref{pr-as-minf} and \eref{pr-as-pinf}).  Then
\eq
\label{pr-as-integral}
\int_{-\infty}^{+\infty}u(y)\rmd y = \frac{1}{2}\log\left(\frac{1+\rmi s_1}{1-\rmi s_1}\right).
\endeq
\end{thm}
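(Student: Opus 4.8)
The plan is to exploit the matrix identity $\rme^{-U(a,x)\sigma_2}=P(x)P(a)^{-1}$ derived in the introduction from \eref{P}, which expresses the exponential of the integral $U(a,x)=\int_a^x u\,\rmd y$ purely in terms of the boundary value $P(\cdot)=\lim_{\lambda\to0}\Psi_k(\lambda;\cdot)$ of the Riemann--Hilbert solution at the origin. Writing $\mathcal{I}:=\int_{-\infty}^{\infty}u(y)\,\rmd y=\lim_{x\to+\infty,\,a\to-\infty}U(a,x)$, the problem reduces to computing the two constant matrices $P_{+}:=\lim_{x\to+\infty}P(x)$ and $P_{-}:=\lim_{a\to-\infty}P(a)$ and then reading $\mathcal{I}$ off from $\rme^{-\mathcal{I}\sigma_2}=P_{+}P_{-}^{-1}$. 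Because $\rme^{-\mathcal{I}\sigma_2}=\cosh\mathcal{I}\,I-\sinh\mathcal{I}\,\sigma_2$, the value of $\mathcal{I}$ is fixed by a single ratio of entries, so it suffices to identify $P_{+}$ and $P_{-}$.

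First I would treat the limit $x\to+\infty$. In this regime the Ablowitz--Segur solution decays like $\rmi s_1\Ai(x)$ by \eref{pr-as-pinf}, the phase $\theta(\lambda;x)=\rmi(\tfrac{4}{3}\lambda^3+x\lambda)$ has no real stationary points, and its complex turning points $\pm\rmi\sqrt{x}/2$ recede to infinity. The Deift--Zhou analysis of \cite{Deift:1995, Fokas:2006-book} then has a trivial outer parametrix, giving $\Psi(\lambda;x)\rme^{\theta\sigma_3}\to I$ uniformly on a fixed neighborhood of $\lambda=0$, which stays bounded away from the turning points. Since $\theta(0;x)=0$, this yields $P_{+}=\lim_{x\to+\infty}\Psi(0;x)=I$ up to exponentially small corrections, so that $\rme^{-\mathcal{I}\sigma_2}=P_{-}^{-1}$ and hence $P_{-}=\rme^{\mathcal{I}\sigma_2}$.

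The substance of the argument is the limit $a\to-\infty$, where the two turning points $\pm\sqrt{-a}/2$ are real and $\lambda=0$ sits exactly between them in the oscillatory band. I would run the steepest-descent analysis for the oscillatory regime as in \cite{Deift:1995, Fokas:2006-book}: conjugate by the $g$-function, open the lenses across the band, and build the outer parametrix from a scalar Szeg\H{o}-type function $\delta(\lambda)$ governed by $1-|s_1|^2$ on $(-\sqrt{-a}/2,\sqrt{-a}/2)$ with $\delta(\infty)=1$, together with local Airy (or parabolic-cylinder) parametrices at the endpoints. Rather than extracting $u$ from the $\lambda\to\infty$ coefficient as usual, I would follow the composition of all these transformations at the fixed interior point $\lambda=0$ and evaluate the limit $P_{-}$. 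The expected outcome is that $\delta(0)$ produces the diagonal factor $(1-|s_1|^2)^{\mp1/2}$ while the off-diagonal part of the outer parametrix produces the ratio $s_1$, so that $P_{-}=\rme^{\mathcal{I}\sigma_2}$ with $\tanh\mathcal{I}=\rmi s_1$ and $\cosh\mathcal{I}=(1-|s_1|^2)^{-1/2}$. Solving $\tanh\mathcal{I}=\rmi s_1$ then gives $\mathcal{I}=\tfrac12\log\!\big((1+\rmi s_1)/(1-\rmi s_1)\big)$, which is \eref{pr-as-integral}.

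The main obstacle is precisely this final evaluation at $\lambda=0$. The standard Deift--Zhou output controls $\Psi$ near $\lambda=\infty$, which gives $u$, whereas here the required quantity is the solution at an interior point of the oscillatory interval, where both the band structure of the outer parametrix and the scalar $\delta$-function act nontrivially. One must (i) keep careful track of the diagonal conjugations and the branch of $\delta$ at the origin, (ii) show that the two endpoint local parametrices do not contribute to the interior limit, and (iii) verify that the Deift--Zhou error terms remain uniformly small at $\lambda=0$ so that $P_{-}$ genuinely exists and equals the parametrix value. The existence of $P_{-}$ is in any case guaranteed a posteriori by the identity $\rme^{-U(a,x)\sigma_2}=P(x)P(a)^{-1}$, since $P_{+}=I$ and $U(a,x)$ converges; the content of the proof is to read off the explicit constant from the parametrix at the origin.
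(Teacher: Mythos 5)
Your overall strategy coincides with the paper's: use the $\lambda=0$ reduction \eref{eq:Lax0} of the Lax equation to write $\rme^{-U(a,x)\sigma_2}=P(x)P(a)^{-1}$ and then identify the two limits of $P$ by Deift--Zhou analysis. Your guessed matrices are in fact consistent with what the paper obtains (working in the sector $\Omega_1$ one gets $\lim_{x\to+\infty}\Psi_1(0;x)=I$ and $\lim_{x\to-\infty}\Psi_1(0;x)=(1+s_1^2)^{-1/2}\bpm 1 & s_1 \\ -s_1 & 1 \epm=\rme^{\mathcal{I}\sigma_2}$ with $\tanh\mathcal{I}=\rmi s_1$), so the final formula is right. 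But there is a genuine gap in the one limit you do justify. The point $\lambda=0$ is the common endpoint of all six jump rays, so $\Psi(0;x)$ exists only as a boundary value from a fixed sector $\Omega_k$, and the boundary values from different sectors differ by the constant Stokes matrices $S_k$, which do not become small as $x\to+\infty$. Your argument for $P_+=I$ (``$\Psi\rme^{\theta\sigma_3}\to I$ uniformly on a fixed neighborhood of $\lambda=0$'') would give $I$ in every sector and is therefore false as stated: the paper works in $\Omega_2$ and finds $\lim_{x\to+\infty}\Psi_2(0;x)=\bpm 1 & 0 \\ s_1 & 1 \epm$, precisely because deforming the rays off the origin leaves behind a triangular connection factor. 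The same bookkeeping is the source of the off-diagonal entries of $P_-$: in the $x\to-\infty$ regime the model problem on the band carries the purely diagonal jump $S_D$, so the outer parametrix is diagonal (the scalar $f(z)^{\sigma_3}$ with $f_+(0)=\sqrt{1-s_1s_3}$), and the entries $\pm s_1$ of $P_-$ come entirely from the connection matrix between $\Psi_1$ and the deformed solution (see \eref{Psi-minusinf-sing}), not from ``the off-diagonal part of the outer parametrix'' as you suggest.

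Beyond that, the computation of $P_-$ --- which you yourself flag as ``the main obstacle'' --- is only stated as an expected outcome. Since reading off $\mathcal{I}$ from $P_+P_-^{-1}$ is trivial once $P_\pm$ are known, this expectation is essentially the entire content of the theorem; the paper carries it out by evaluating the Szeg\H{o}-type diagonal parametrix at the interior point $z=0$ and multiplying by the explicit triangular connection matrix. Finally, inverting $\tanh\mathcal{I}=\rmi s_1$ (equivalently, taking the logarithm as in \eref{2piim-as}) determines $\mathcal{I}$ only modulo $2\pi\rmi\mathbb{Z}$; the paper removes this ambiguity by noting that $u$, and hence $\mathcal{I}$, is real. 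You should include both of these steps.
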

\begin{proof}
Consider $\lambda$ approaching 0 in the region $\Omega_2$ (see figure 
\ref{pII-rhp}).  That is, set
\eq
P(x):=\Psi_2(0;x).
\endeq
Since the Ablowitz-Segur solutions are integrable on the entire real line, we 
could choose $a=-\infty$ or $a=+\infty$ in \eref{P};  we pick $a=+\infty$.  
Then
\eq
\lim_{x\to+\infty}P(x) = \lim_{x\to+\infty}\rme^{U(+\infty,x)\sigma_2}C = C.
\endeq
We therefore find $C$ by analyzing the Riemann-Hilbert problem as 
$x\to+\infty$.  This analysis is done in \cite{Fokas:2006-book}, Chapter 11, Section 6, so 
we merely provide a short sketch of the argument.  Note that since $s_2=0$ the 
jump contour consists of only four rays.  We use the scalings
\eq
z:=\frac{\lambda}{x^{1/2}}, \quad t:=x^{3/2}, \quad \Phi(z;t):=\Psi(\lambda(z);x), \quad \phi(z):=\rmi\frac{4}{3}z^3+\rmi z.
\endeq
Using standard contour deformations, this Riemann-Hilbert problem for 
$\Phi(z;t)$ may be transformed into the following Riemann-Hilbert problem 
for $\Phi^{\rm{def}}(z;t)$:
\begin{equation}
    \cases{
        \Phi^{\rm{def}}(z;t) \mbox{ is analytic in } \mathbb{C}\setminus\left\{\Im z=\pm\frac{1}{2}\right\}\\
        \Phi_+^{\rm{def}}(z;t)= \Phi_-^{\rm{def}}(z;t) \bpm 1 & 0 \\ s_1 & 1 \epm, \qquad z\in \left\{\Im z = \frac{1}{2}\right\}\\
        \Phi_+^{\rm{def}}(z;t)= \Phi_-^{\rm{def}}(z;t) \bpm 1 & s_1 \\ 0 & 1 \epm, \qquad z\in \left\{\Im z = -\frac{1}{2}\right\}\\
        \Phi^{\rm{def}}(z;t)\rme^{t\phi(z)\sigma_3} = I + O(z^{-1}), \qquad z\to \infty.}
\end{equation}
Here the two jump contours are oriented from $-\infty$ to $+\infty$.  Under 
this deformation,
\eq
\Phi(z;t) = \Phi^{\rm{def}}(z;t)\bpm 1 & 0 \\ s_1 & 1 \epm \mbox{ for } z\in\Omega_2\cap \left\{0<\Im z<\frac{1}{2}\right\}.
\endeq
Then, from a standard Riemann-Hilbert problem small norm argument 
\cite{Deift:1995},
\eq
\Phi^{\rm{def}}(z;t)\rme^{t\phi(z)\sigma_3} = I + O\left(\frac{\rme^{-2t/3}}{\sqrt{t}}\right) \mbox{ as } t\to+\infty.
\endeq
Undoing the contour deformations gives
\eq
\label{C-as}
C = \lim_{x\to+\infty}P(x) = \lim_{t\to+\infty}\Psi^{\rm{def}}(0;t)\bpm 1 & 0 \\ s_1 & 1 \epm = \bpm 1 & 0 \\ s_1 & 1 \epm.
\endeq
Thus
\begin{equation}
\label{P-as}
\hspace{-.1in}P(x) = \bpm \cosh U(+\infty,x) + \rmi s_1\sinh U(+\infty,x) & \rmi\sinh U(+\infty,x) \\ -\rmi\sinh U(+\infty,x) + s_1\cosh U(+\infty,x) & \cosh U(+\infty,x) \epm.
\end{equation}
The analysis for $x$ near $+\infty$ goes through even if $s_1=\pm \rmi a$, $a\geq 1$
(and $s_3=-s_1$, $s_2=0$).  We will use this fact when studying the 
Hastings-McLeod solution below.  However, for the analysis at $x$ near 
$-\infty$ the analysis is different for $s_1=\pm \rmi$ (since $u(x)$ is not 
integrable at that endpoint).

The analysis of $\Psi_1(0;x)$ as $x\to-\infty$ is identical for both the 
Ablowitz-Segur solutions and the generic purely imaginary solutions.  This 
calculation is carried out below as part of the proof of Theorem 
\ref{pi-sing-thm}.  Specifically, for the Ablowitz-Segur solutions 
equation \eref{Psi-minusinf-sing} holds with $s_3=-s_1$.  At $\lambda=0$ 
the two functions $\Psi_1(\lambda;x)$ and $\Psi_2(\lambda;x)$ are related 
by a multiplicative jump:
\eq
\label{Psi-minusinf-as}
\fl \eqalign{
\lim_{x\to-\infty}P(x) & =\lim_{x\to-\infty}\Psi_2(0;x) = \lim_{x\to-\infty}\Psi_1(0;x)\bpm 1 & 0 \\ s_1 & 1 \epm \\ 
  & = \frac{1}{\sqrt{1-s_1s_3}}\bpm 1 & -s_3 \\ -s_1 & 1 \epm \bpm 1 & 0 \\ s_1 & 1 \epm = \bpm \sqrt{1-s_1s_3} & \frac{-s_3}{\sqrt{1-s_1s_3}} \\ 0 & \frac{1}{\sqrt{1-s_1s_3}} \epm.}
\endeq
Combining \eref{P-as} and \eref{Psi-minusinf-as} and using $s_3=-s_1$ shows
\eq
\eqalign{
\fl \bpm \cosh U(+\infty,-\infty) + \rmi s_1\sinh U(+\infty,-\infty) & \rmi\sinh U(+\infty,-\infty) \\ -\rmi\sinh U(+\infty,-\infty) + s_1\cosh U(+\infty,-\infty) & \cosh U(+\infty,-\infty) \epm \\ 
 = \bpm \sqrt{1+s_1^2} & \frac{s_1}{\sqrt{1+s_1^2}} \\ 0 & \frac{1}{\sqrt{1+s_1^2}} \epm.}
\endeq
The (21) entry gives
\eq
(-\rmi+s_1)\rme^{U(+\infty,-\infty)} + (\rmi+s_1)\rme^{-U(+\infty,-\infty)} = 0.
\endeq
Solving for $U(+\infty,-\infty)$ gives 
\eq
\label{2piim-as}
\int_{-\infty}^{+\infty}u(y)\rmd y = \frac{1}{2}\log\left(\frac{1+\rmi s_1}{1-\rmi s_1}\right) + 2\rmi\pi m
\endeq
for some $m\in\mathbb{Z}$.  Since $u(x)$ is purely real, $m=0$, which gives 
equation \eref{pr-as-integral}.
\end{proof}

Next we compute the total integral of the Hastings-McLeod solutions.  
Since these functions are not integrable near $x=-\infty$ we will 
subtract off the nonintegrable part.  

The integral of the Hastings-McLeod solution with $s_1=-\rmi$ appears 
in the Tracy-Widom distribution functions that arise in random matrix theory 
\cite{Tracy:1994,Tracy:1996}.  The proof of Theorem \ref{hm-theorem} below 
is a new, shorter way to show a result that was 
obtained previously by the first three authors using the asymptotics of 
orthogonal polynomials in \cite{Baik:2007}.

\begin{thm} \label{hm-theorem} {\bf [Hastings-McLeod solutions]}  Suppose that 
$u(x)$ is a solution to the Painlev\'e II equation \eref{pII} with monodromy 
data $s_1=\pm \rmi$, $s_3=\mp \rmi$, $s_2=0$ (that is, with 
asymptotics given by \eref{hm-minf} and \eref{hm-pinf}).  Then, for any 
$c\in\mathbb{R}$,
\eq
\label{hm-integral}
\fl \int_c^{+\infty}u(y)\rmd y+\int_{-\infty}^c\left(u(y)-\rmi s_1\sqrt{\frac{|y|}{2}}\right)\rmd y = -\rmi s_1\frac{\sqrt{2}}{3}c|c|^{1/2} + \rmi s_1\frac{1}{2}\log(2).
\endeq
\end{thm}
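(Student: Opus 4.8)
The plan is to run the same $\lambda=0$ argument as in the proof of Theorem~\ref{pr-as-theorem}, the only genuinely new ingredient being the behaviour of $P(x):=\Psi_2(0;x)$ as $x\to-\infty$, where now $u$ grows instead of decaying. By Lemma~\ref{symmetry-lemma} it suffices to treat $s_1=-\rmi$, since both sides of \eref{hm-integral} are odd under $s_1\mapsto-s_1$, $u\mapsto-u$. Near $+\infty$ nothing changes: as the text already observes, the small-norm argument behind \eref{P-as} goes through for $s_1=\pm\rmi$, so $C=\lim_{x\to+\infty}P(x)=\bpm 1 & 0 \\ s_1 & 1 \epm$ and $P(x)=\rme^{-U(+\infty,x)\sigma_2}C$ with $U(+\infty,x)=\int_{+\infty}^x u$. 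Everything therefore reduces to evaluating $\lim_{x\to-\infty}\Psi_2(0;x)$ and converting the result into a statement about the regularized integral $I(c)$ (the left-hand side of \eref{hm-integral}). Splitting off the nonintegrable part via \eref{hm-minf} gives, for fixed $c<0$ and as $x\to-\infty$,
\eq
U(+\infty,x)=-\rmi s_1\frac{\sqrt{2}}{3}|x|^{3/2}-I(c)-\rmi s_1\frac{\sqrt{2}}{3}c|c|^{1/2}+o(1).
\endeq
I would note in passing that the $c$-dependence is automatic: differentiating \eref{hm-integral} in $c$ produces $-\rmi s_1\sqrt{|c|/2}$ on both sides, so it suffices to verify the identity for a single convenient $c$.

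Substituting this expansion into \eref{P-as} with $s_1=-\rmi$ shows that the two columns of $P(x)$ scale like $\rme^{U}$ and $\tfrac12\rme^{-U}$, that is, they are respectively exponentially small and exponentially large of order $\rme^{\mp\frac{\sqrt{2}}{3}|x|^{3/2}}$, with $O(1)$ prefactors carrying the unknown constant $\rme^{\pm I(c)}$. Thus the target constant $I(c)$ is encoded entirely in the subleading ($O(1)$) prefactors of $\Psi_2(0;x)$, not in its leading exponential.

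The core of the proof is the independent evaluation of $\Psi_2(0;x)$ as $x\to-\infty$ by the Deift--Zhou steepest-descent method, for which I would switch to the Flaschka--Newell setting of \cite{Deift:1995} (as the introduction signals through \eref{Deift-to-Fokas}). The phase $\theta(\lambda)=\rmi(\tfrac43\lambda^3+x\lambda)$ has real stationary points at $\lambda=\pm\sqrt{|x|}/2$, so the relevant equilibrium measure lives on the single band $[-\sqrt{|x|}/2,\sqrt{|x|}/2]$, whose midpoint is precisely $\lambda=0$. I would introduce the associated $g$-function, open lenses about the band, install Airy parametrices at the two endpoints and the outer parametrix elsewhere, and estimate the error by a small-norm argument. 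Evaluating the approximation at the band centre $\lambda=0$ then yields $\Psi_2(0;x)$ as an explicit $O(1)$ matrix whose two columns are dressed by the scalar exponentials $\rme^{\mp(\frac{\sqrt{2}}{3}|x|^{3/2}+\kappa)}$, with $\kappa$ a constant generated by the value of the $g$-function at the band centre and by the normalizations of the outer and Airy parametrices.

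Finally I would match the two descriptions of $P(x)$ entry by entry. The exponential factors $\rme^{\pm\frac{\sqrt{2}}{3}|x|^{3/2}}$ cancel against the divergent part of $U(+\infty,x)$ found above, confirming consistency, while equating the $O(1)$ prefactors gives a single equation for $I(c)$, whose solution is $I(c)=-\rmi s_1\frac{\sqrt{2}}{3}c|c|^{1/2}+\rmi s_1\frac12\log 2$, the $\tfrac12\log 2$ coming exactly from the constant $\kappa$; the case $s_1=+\rmi$ then follows from Lemma~\ref{symmetry-lemma}. The main obstacle is this penultimate step: unlike the $+\infty$ analysis, which was a pure small-norm perturbation of the identity with a trivial outer solution, the $-\infty$ analysis must be pushed far enough to capture not merely the leading exponential but the \emph{constant} prefactor of $\Psi_2(0;x)$, since that is what produces $\log 2$. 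This forces one to track the value of the $g$-function at the band centre together with the precise normalizing constants of the Airy and outer parametrices, and to verify that both the exponentially large and the exponentially small columns are reproduced consistently.
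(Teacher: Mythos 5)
Your strategy is exactly the paper's: reduce everything to $P(x)=\Psi_2(0;x)=\rme^{-U(+\infty,x)\sigma_2}C$, reuse the Ablowitz--Segur computation $C=\bpm 1 & 0 \\ s_1 & 1\epm$ at $+\infty$, and extract the constant by an independent $g$-function steepest-descent evaluation of $\Psi_2(0;x)$ as $x\to-\infty$, with reality of $u$ fixing the branch of the logarithm. Your subsidiary observations (the $c$-dependence is forced by differentiation; the constant must be read off the $O(1)$ prefactors multiplying the exponentials $\rme^{\mp\frac{\sqrt{2}}{3}|x|^{3/2}}$; the Airy parametrices at the band edges only enter through the error term while the $\log 2$ comes from the outer parametrix evaluated at the band centre) are all correct and match what the paper does, quoting \cite{Deift:1995}.

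There is, however, one concrete error in the step you yourself identify as the crux. You place the band at the stationary points of the bare phase, $[-\sqrt{|x|}/2,\sqrt{|x|}/2]$. That is the correct saddle-point geometry for the \emph{decaying} (Ablowitz--Segur) solutions, but not for Hastings--McLeod: here the band is determined by the growth $u(x)\sim \rmi s_1\sqrt{|x|/2}$ of the potential in the Dirac operator \eref{Lax-x}, and it is $[-\sqrt{|x|/2},\sqrt{|x|/2}]$ --- the stationary points $\pm\sqrt{|x|}/2$ lie strictly inside it. Equivalently, after the rescaling $\lambda\mapsto\sqrt{-x/2}\,\lambda$ the correct $g$-function is $g(\lambda)=(\lambda^2-1)^{3/2}$ with branch points at $\pm 1$, not at $\pm 1/\sqrt{2}$. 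This is not cosmetic: with your band the conjugated jump on $[-1,1]$ does not reduce to the constant off-diagonal matrix $S_6S_1$ plus decaying lens factors, and the exponential rate at $\lambda=0$ would come out as $\frac{1}{6}|x|^{3/2}$ rather than the required $\frac{\sqrt{2}}{3}|x|^{3/2}$, so the matching against the divergent part of $U(+\infty,x)$ would fail. With the correct band one gets $g_+(0)=-\rmi$ and $f_+(0)=\rme^{\rmi\pi/4}$ for the outer parametrix, whence $\lim_{x\to-\infty}P(x)\rme^{t\sigma_3}=\frac{1}{\sqrt{2}}\bpm 1 & \rmi \\ \rmi & 1\epm$ with $t=\frac{\sqrt{2}}{3}(-x)^{3/2}$; the factor $\frac{1}{\sqrt{2}}$ is precisely your $\kappa$ and yields $\frac{1}{2}\log 2$. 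Once the band is corrected, the rest of your argument goes through and coincides with the paper's proof.
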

\begin{proof}
We set $s_1=\rmi$.  The alternate case $s_1=-\rmi$ follows immediately from noting 
that if $u(x)$ is a solution to \eref{pII} then so is $-u(x)$.

Take $\lambda\in\Omega_2$ and define
\eq
P(x):=\Psi_2(0;x).  
\endeq
The Hastings-McLeod solution is integrable at $x=+\infty$, so set $a=+\infty$.  
The constant matrix $C$ was computed above in \eref{C-as} in the section on 
Ablowitz-Segur solutions.  Explicitly,
\eq
C = \lim_{x\to+\infty}P(x) = \bpm 1 & 0 \\ \rmi & 1 \epm,
\endeq
and therefore
\begin{equation}
\label{P-hm}
P(x) = \bpm \rme^{-U(+\infty,x)} & \rmi\sinh U(+\infty,x) \\ \rmi\rme^{-U(+\infty,x)} & \cosh U(+\infty,x) \epm.
\end{equation}

Now we compute the asymptotics of $P(x)$ as $x\to-\infty$, taking into 
account the nonintegrable term using a $g$-function.  This Riemann-Hilbert 
problem was analyzed in \cite{Deift:1995}, and we give a sketch of the 
argument.  Recall that the function $\Psi(\lambda;x)$ and $m^{(1)}(\lambda;x)$ 
used in \cite{Deift:1995} are related as in \eref{Deift-to-Fokas}.  Define
\eq
g(\lambda):=(\lambda^2-1)^{3/2}
\endeq
with branch cut on $[-1,1]$ and sheet chosen so $g(\lambda)\sim \lambda^3$ as 
$\lambda\to\infty$.  Then set (see (6.17) in \cite{Deift:1995})
\eq
m^{\rm{g}}(\lambda;x) := m^{(1)}\left(\sqrt{\frac{-x}{2}}\lambda;x\right)\rme^{\rmi t(g(\lambda)-(\lambda^3-\frac{3}{2}\lambda))\sigma_3}, \quad t:=\frac{\sqrt{2}}{3}(-x)^{3/2}.
\endeq
By standard changes of variables we can transform $m^g(\lambda;x)$ to 
$m^{(23)}(\lambda;x)$, which solves the Riemann-Hilbert problem\footnote{We 
use the notation $m^{(23)}(\lambda;x)$ to correspond to reference 
\cite{Deift:1995}.}
\begin{equation}
\fl    \cases{
        m^{(23)}(\lambda;x) \mbox{ is analytic in } \mathbb{C}\setminus\Sigma\\
        m^{(23)}_+(\lambda;x)= m^{(23)}_-(\lambda;x)\rme^{-\rmi \pi\sigma_3/4}\rme^{-\rmi tg_-(\lambda)\sigma_3} V^{\rm{HM}} \rme^{\rmi tg_+(\lambda)\sigma_3} \rme^{\rmi \pi\sigma_3/4}, \quad \lambda\in\Sigma\\
        m^{(23)}(\lambda;x)= I + O(\lambda^{-1}), \quad \lambda\to \infty,}
\end{equation}
with the contour $\Sigma$ and the constant jump $V^{\rm{HM}}$ given in figure 
\ref{pII-hm-rhp}.  For $\lambda\in\Omega_2$, 
\eq
\Psi(\lambda;x) = \rme^{\rmi \pi\sigma_3/4}m^{(23)}(\lambda;x)\rme^{-\theta(\lambda;x)\sigma_3}\rme^{-\rmi \pi\sigma_3/4}.
\endeq
In \cite{Deift:1995} the analysis includes $s_2\neq0$, but 
for us $S_2=S_5=I$.  As 
$x\to-\infty$ (that is, $t\to+\infty$), formally the jump approaches 
the identity on all portions of the contour in figure \ref{pII-hm-rhp} with the 
exception of the jump $S_4^{-1}S_3^{-1} = S_6S_1$ on the interval $[-1,1]$.  
\begin{figure}[h]
\begin{center}
\epsfig{file=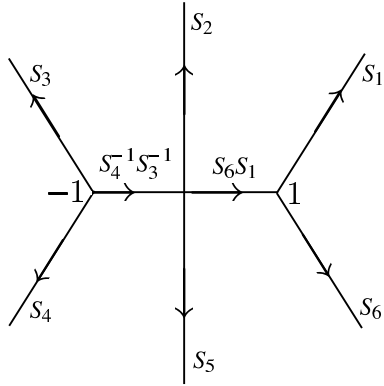, height=2in}
\caption{The deformed Riemann-Hilbert problem for the Hastings-McLeod solutions to Painlev\'e II.}
\label{pII-hm-rhp}
\end{center}
\end{figure}
Now $m^g(\lambda;x) = (I+O(t^{-1/2}))m^{\rm{mod}}(\lambda)$ 
\cite{Deift:1995}, where $m^{\rm{mod}}(\lambda)$ solves
\begin{equation}
    \cases{
        m^{\rm{mod}}(\lambda) \mbox{ is analytic in } \mathbb{C}\setminus[-1,1]\\
        m_+^{\rm{mod}}(\lambda)= m_-^{\rm{mod}}(\lambda) \rme^{-\rmi \pi\sigma_3/4}\bpm 0 & \rmi \\ \rmi & 0 \epm \rme^{\rmi \pi\sigma_3/4}, \qquad \lambda\in[-1,1]\\
        m^{\rm{mod}}(\lambda)= I + O(\lambda^{-1}), \qquad \lambda\to \infty.}
\end{equation}
This problem is solved explicitly by
\eq
\eqalign{
m^{\rm{mod}}(\lambda) = \frac{1}{2}\rme^{-\rmi \pi\sigma_3/4}\bpm f(\lambda)+f(\lambda)^{-1} & f(\lambda)-f(\lambda)^{-1} \\ f(\lambda)-f(\lambda)^{-1} & f(\lambda)+f(\lambda)^{-1} \epm \rme^{\rmi \pi\sigma_3/4}, \\ 
f(\lambda):=\left(\frac{\lambda-1}{\lambda+1}\right)^{1/4}.}
\endeq
Here $f(\lambda)$ has its branch cut on $[-1,1]$ and $f(\lambda)\sim 1$ as $\lambda\to\infty$.  
Undoing the transformations from $\Psi_2(\lambda;x)$ and using 
$f_+(0)=\rme^{\rmi \pi/4}$, $g_+(0)=-\rmi$, and $\displaystyle \lim_{x\to-\infty}m^{(23)}(\lambda;x) = m^{\rm{mod}}(\lambda)$ outside of small neighborhoods of $\pm 1$ 
we have 
\eq
\label{P-limit-hm}
\fl
\eqalign{
\lim_{x\to-\infty}P(x)\rme^{t\sigma_3} & = \lim_{x\to-\infty}\Psi_2(0;x)\rme^{\rmi tg_+(0)\sigma_3} = \rme^{\rmi \pi\sigma_3/4}m_+^{\rm{mod}}(0)\rme^{-\rmi \pi\sigma_3/4} \\ 
   & = \frac{\sqrt{2}}{2}\bpm 1 & \rmi \\ \rmi & 1 \epm.}
\endeq
Combining \eref{P-hm} and \eref{P-limit-hm} gives
\eq
\lim_{x\to-\infty}\bpm \rme^{-U(+\infty,x)+t} & \rmi\sinh U(+\infty,x)\rme^{-t} \\ \rmi\rme^{-U(+\infty,x)+t} & \cosh U(+\infty,x)\rme^{-t} \epm = \frac{\sqrt{2}}{2}\bpm 1 & \rmi \\ \rmi & 1 \epm.
\endeq
The (11) entry is equivalent to 
\eq
\lim_{x\to-\infty}\exp\left(\int_x^{+\infty}u(y)\rmd y+\frac{\sqrt{2}}{3}(-x)^{3/2}\right) = \frac{\sqrt{2}}{2}.
\endeq
Hence, for any fixed $c$,
\eq
\lim_{x\to-\infty}\exp\left(\int_c^{+\infty}u(y)\rmd y+\int_x^c u(y)\rmd y+\frac{\sqrt{2}}{3}(-x)^{3/2}\right) = \frac{\sqrt{2}}{2}.
\endeq
From the asymptotics of $u(y)$ we see that $u(y)+\sqrt{|y|/2}$ is 
integrable at $-\infty$, so
\eq
\fl \lim_{x\to-\infty}\exp\left(\int_c^{+\infty}u(y)\rmd y+\int_x^c\left(u(y)+\sqrt{\frac{|y|}{2}}\right)\rmd y-\frac{\sqrt{2}}{3}c|c|^{1/2}\right) = \frac{\sqrt{2}}{2}.
\endeq
Taking a logarithm shows
\eq
\eqalign{
\fl \int_c^{+\infty}u(y)\rmd y+\int_{-\infty}^c\left(u(y)-\rmi s_1\sqrt{\frac{|y|}{2}}\right)\rmd y \\ = -\rmi s_1\frac{\sqrt{2}}{3}c|c|^{1/2} + \rmi s_1\frac{1}{2}\log(2) + 2\rmi \pi m}
\endeq
for some $m\in\mathbb{Z}$.  Since $u(x)$ is purely real, we see $m=0$, which 
shows \eref{hm-integral}.
\end{proof}

\section{Purely imaginary solutions}\label{sec:imaginary}
Solutions to \eref{pII} are purely imaginary if and only if 
(page 159 of \cite{Fokas:2006-book}) the monodromy data satisfy
\eq
s_3 = -\overline{s_1}, \quad s_2 = -\overline{s_2}.
\endeq
All purely imaginary solutions are global (page 297 in \cite{Fokas:2006-book}).  
There are two distinct asymptotic 
behaviors:
\begin{itemize}
\item The purely imaginary Ablowitz-Segur solutions 
\cite{Segur:1977,Segur:1981} with monodromy data
\eq
s_1\in\mathbb{R}, \quad s_3=-s_1, \quad s_2=0
\endeq
and asymptotics
\eq
\label{pi-as-minf}
\eqalign{
\fl u(x) = \frac{\rmi d}{(-x)^{1/4}}\sin\left(\frac{2}{3}(-x)^{3/2}+\frac{3}{4}d^2\log(-x)+\phi\right) + O\left(\frac{\log(-x)}{(-x)^{5/4}}\right) \\ 
\mbox{as } x\to-\infty,}
\endeq
\eq
\label{pi-as-pinf}
\fl u(x) = \rmi s_1\Ai(x) + O\left(\frac{\rme^{-(4/3)x^{3/2}}}{x^{1/4}}\right) \quad \mbox{as } x\to+\infty,
\endeq
with
\eq
\label{d-and-phi}
\eqalign{
d^2:=\frac{1}{\pi}\log(1+|s_1|^2), \quad d>0, \hspace{.45in} \\
\phi:=\frac{3}{2}d^2\log(2)-\frac{\pi}{4}-\arg\Gamma\left(\rmi\frac{d^2}{2}\right)-\arg s_1.}
\endeq
\item The generic purely imaginary solutions \cite{Its:1988,Deift:1995} with 
monodromy data
\eq
\Im(s_1)\neq 0, \quad s_3=-\overline{s_1}, \quad s_2 = \frac{s_1-\overline{s_1}}{1+|s_1|^2}
\endeq
and asymptotics
\eq
\label{pi-sing-minf}
\eqalign{
\fl u(x) = \frac{\rmi d}{(-x)^{1/4}}\sin\left(\frac{2}{3}(-x)^{3/2}+\frac{3}{4}d^2\log(-x)+\phi\right) + O\left(\frac{\log(-x)}{(-x)^{5/4}}\right), \\ 
x\to-\infty,}
\endeq
\eq
\label{pi-sing-pinf}
\eqalign{
\fl u(x) = \rmi\sigma\sqrt{\frac{x}{2}}+\frac{\rmi\sigma\rho}{(2x)^{1/4}}\cos\left(\frac{2\sqrt{2}}{3}x^{3/2}-\frac{3}{2}\rho^2\log x + \theta\right) + O\left(\frac{1}{x}\right), \\ 
x\to+\infty.}
\endeq
Here $d$ and $\phi$ are given by \eref{d-and-phi}, and
\eq
\label{rho-sigma-theta}
\eqalign{
\rho^2:= -\frac{1}{\pi}\log(|s_2|) = \frac{1}{\pi}\log\frac{1+|s_1|^2}{2|\Im(s_1)|}, \quad \rho>0, \hspace{.6in} \\
\sigma:=-\mbox{sgn}(\Im(s_1)),\hspace{1in} \\
\theta := -\frac{3\pi}{4}-\frac{7}{2}\rho^2\log 2 + \arg\Gamma(\rmi\rho^2)+\arg(1+s_1^2).}
\endeq
\end{itemize}
Note that the asymptotics as $x\to-\infty$ are exactly the same for both 
types of purely imaginary solutions.  We first find the integral of the 
purely imaginary Ablowitz-Segur solutions.  The result is the same as for 
the purely real Ablowitz-Segur solutions.
\begin{thm} {\bf [Purely imaginary Ablowitz-Segur solutions]}  Suppose that 
$u(x)$ is a solution to the Painlev\'e II equation \eref{pII} with monodromy 
data $s_1\in\mathbb{R}$, $s_3=-s_1$, $s_2=0$ (that is, with 
asymptotics given by \eref{pi-as-minf} and \eref{pi-as-pinf}).  Then
\eq
\label{pi-as-integral}
\int_{-\infty}^{+\infty}u(y)\rmd y = \frac{1}{2}\log\left(\frac{1+\rmi s_1}{1-\rmi s_1}\right) = \rmi\arctan(s_1) = \rmi\arg(1+\rmi s_1).
\endeq
\end{thm}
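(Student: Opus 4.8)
The plan is to run the argument in essentially the same way as the proof of Theorem~\ref{pr-as-theorem}, exploiting the fact that the purely imaginary Ablowitz--Segur solutions share the monodromy constraints $s_2=0$ and $s_3=-s_1$ with the purely real ones; the only change is that now $s_1\in\mathbb{R}$ rather than $s_1$ being purely imaginary. Since $s_2=0$ the jump contour again has only four rays, and since the solution is globally smooth and (conditionally) integrable at both ends, the quantity $\int_{-\infty}^{+\infty}u(y)\,\rmd y=U(+\infty,-\infty)$ is well defined and can be extracted from the $\lambda\to 0$ value of $\Psi$.

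First I would set $P(x):=\Psi_2(0;x)$ and choose $a=+\infty$ in \eref{P}. The asymptotic analysis of the Riemann--Hilbert problem as $x\to+\infty$ is literally the one already carried out for the real case: it depends only on $s_2=0$ and $s_3=-s_1$ and not on the nature of $s_1$, so \eref{C-as} and \eref{P-as} hold verbatim, giving $C=\bpm 1&0\\ s_1&1\epm$. Next, because the asymptotics as $x\to-\infty$ coincide for both families of purely imaginary solutions, the limit of $\Psi_1(0;x)$ computed in the proof of Theorem~\ref{pi-sing-thm} (equation \eref{Psi-minusinf-sing}) applies with $s_3=-s_1$, and the jump relation between $\Psi_1$ and $\Psi_2$ at $\lambda=0$ yields \eref{Psi-minusinf-as}. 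Equating the two expressions for $\lim_{x\to-\infty}P(x)$ and reading off the $(21)$ entry gives $(-\rmi+s_1)\rme^{U}+(\rmi+s_1)\rme^{-U}=0$ with $U=U(+\infty,-\infty)$, hence $\int_{-\infty}^{+\infty}u(y)\,\rmd y=\frac12\log\!\big(\frac{1+\rmi s_1}{1-\rmi s_1}\big)+2\pi\rmi m$ for some $m\in\mathbb{Z}$, exactly as in \eref{2piim-as}.

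The step I expect to be the genuine obstacle is pinning down the integer $m$. In the real case reality of $u$ forced $m=0$, but here this no longer works: for $s_1\in\mathbb{R}$ the principal value $\frac12\log\!\big(\frac{1+\rmi s_1}{1-\rmi s_1}\big)=\rmi\arctan(s_1)$ is purely imaginary, $2\pi\rmi m$ is purely imaginary, and $\int u$ is purely imaginary, so the purity constraint is vacuous. To resolve this I would use a continuity argument in the parameter $s_1$. The map $s_1\mapsto u(\,\cdot\,;s_1,0,-s_1)$ is continuous (the Riemann--Hilbert solution depends continuously on the monodromy data, and the asymptotics \eref{pi-as-minf}--\eref{pi-as-pinf} are uniform enough that the conditionally convergent integral $\int_{-\infty}^{+\infty}u$ is a continuous function of $s_1$), while $\rmi\arctan(s_1)$ is continuous on $\mathbb{R}$; therefore $m=m(s_1)$ is a continuous integer-valued function, hence constant. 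At the base point $s_1=0$ the monodromy data vanish, the corresponding solution is $u\equiv 0$, and both sides of \eref{pi-as-integral} are zero, so $m(0)=0$ and thus $m\equiv 0$. Finally, for real $s_1$ one has $1-\rmi s_1=\overline{1+\rmi s_1}$, so $\frac12\log\!\big(\frac{1+\rmi s_1}{1-\rmi s_1}\big)=\rmi\arg(1+\rmi s_1)=\rmi\arctan(s_1)$, which is the claimed chain of equalities; the oddness of both sides in $s_1$ is moreover consistent with the symmetry of Lemma~\ref{symmetry-lemma}.
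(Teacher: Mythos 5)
Your proposal is correct and follows essentially the same route as the paper: reuse the real Ablowitz--Segur computation verbatim up to the $2\pi\rmi m$ ambiguity, then eliminate $m$ by continuity in $s_1$ together with the base point $s_1=0$, $u\equiv 0$. The only place the paper does more work is in actually verifying that $\int_{-\infty}^{+\infty}u(x;s_1)\,\rmd x$ is continuous in $s_1$ (splitting the line at $\pm L$, using dominated convergence on the tails, and explicitly integrating the oscillatory leading term at $-\infty$ via incomplete exponential integrals), whereas you assert this uniformity; but you correctly identify it as the point needing justification, and your outline of why it holds matches the paper's argument.
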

\begin{proof}
The asymptotic analysis of the purely imaginary Ablowitz-Segur solutions is 
exactly the same as that for the purely real Ablowitz-Segur solutions.  
The proof of Theorem \ref{pr-as-theorem} applies without change through 
equation \eref{2piim-as}:
\eq
\label{pi-as-with-m}
\int_{-\infty}^{+\infty}u(y)\rmd y = \frac{1}{2}\log\left(\frac{1+\rmi s_1}{1-\rmi s_1}\right) + 2\rmi\pi m.
\endeq
Assume $s_3=-s_1$ and $s_2=0$ and parameterize the purely imaginary 
Ablowitz-Segur solutions $u(x;s_1)$ by $s_1$.  Note that $s_1=0$ 
corresponds to to the solution $u(x;s_1=0)\equiv 0$, and 
in this case clearly $m=0$.  We now show continuity of the total integral 
$\int_{-\infty}^\infty u(x)\rmd x$ with respect to $s_1$ for $s_1\in\mathbb{R}$, 
which shows $m=0$ in \eref{pi-as-with-m}.  The Fredholm theory for 
Riemann-Hilbert problems shows that, for fixed $x$, the solution 
$\Psi$ to \eref{rhp} is either meromorphic in $s_1$ or there
is no solution for any $s_1$ (\cite{Fokas:2006-book} Corollary 3.1).  
Furthermore, the associated Riemann-Hilbert problem has a global solution
for all $s_1\in\mathbb{R}$ assuming $s_3=-s_1$ and $s_2=0$ (see 
\cite{Fokas:2006-book} Theorem 5.6 and note the condition in (5.5.1) 
should read $|s_1+\overline{s_3}|<2$).  Combining these two facts shows that 
$\Psi$ is analytic in $s_1$ for the purely imaginary Ablowitz-Segur solutions, 
and thus $u(x;s_1)$ is continuous in $s_1$.  To show the total integral is 
continuous in $s_1$ we fix $L>0$ large and show
\eq
\label{cont-in-s1}
\eqalign{
\fl
\lim_{s_1\to s_1'}\bigg(\int_{-\infty}^{-L}\big(u(x;s_1)-u(x;s_1') \big)\rmd x+\int_{-L}^L\big(u(x;s_1)-u(x;s_1')\big)\rmd x \\ 
+\int_L^{+\infty}\big(u(x;s_1)-u(x;s_1') \big)\rmd x\bigg) = 0.}
\endeq
The continuity of $u(x;s_1)$ with respect to $s_1$ shows the limit of the 
second integral is zero since the region of integration is compact.  For the 
third integral, use \eref{pi-as-pinf} to write 
\eq
u(x;s_1) = \rmi s_1\mbox{Ai}(x)+E^+(x;s_1), 
\endeq
where $|E^+(x;s_1)|<B^+(x)$ for some 
$B^+(x)\in L^1$ uniformly for $s_1\in[s_1'-\e,s_1'+\e]$.  So
\eqarr
\fl 
\nonumber
\lim_{s_1\to s_1'}\int_L^{+\infty}\big(u(x;s_1)-u(x;s_1') \big)\rmd x & = & \lim_{s_1\to s_1'}\rmi(s_1-s_1')\int_L^{+\infty}\mbox{Ai}(x)\rmd x \\
  & & - \lim_{s_1\to s_1'}\int_L^{+\infty} \big(E^+(x;s_1)-E^+(x;s_1') \big) \rmd x\\ 
\nonumber
   & = & 0
\endeqarr
by the dominated convergence theorem.  For the first integral, use 
\eref{pi-as-minf} to write 
\eq
u(x;s_1) = \frac{\rmi d}{(-x)^{1/4}}\sin\left(\frac{2}{3}(-x)^{3/2}+\frac{3}{4}d^2\log(-x)+\phi\right) + E^-(x;s_1), 
\endeq
where $|E^-(x;s_1)|<B^-(x)$ for some 
$B^-(x)\in L^1$ uniformly for $s_1\in[s_1'-\e,s_1'+\e]$.  Direct computation 
shows that 
\eq
\label{sine-integral}
\eqalign{
\fl \int_{-\infty}^{-L} \frac{\rmi d}{(-x)^{1/4}}\sin\left(\frac{2}{3}(-x)^{3/2}+\frac{3}{4}d^2\log(-x)+\phi\right)\rmd x \\
\hspace{-.5in} = \frac{d}{3}\rme^{-\rmi \phi}L^{3(1-\rmi d^2)/4}\left[ \rme^{2\rmi\phi}L^{3\rmi d^2/2}E_{(1-\rmi d^2)/2}\left( \frac{2}{3}\rmi L^{3/2} \right) - E_{(1+\rmi d^2)/2}\left( \frac{2}{3}\rmi L^{3/2} \right)  \right],}
\endeq
where $E_n(z):=\int_1^\infty\frac{\rme^{-zt}}{t^n}\rmd t$ has a branch cut in $z$ 
on $(-\infty,0)$.  The right-hand side of \eref{sine-integral} is continuous 
in $s_1$.  Therefore, by the dominated convergence theorem,
\eq
\lim_{s_1\to s_1'}\int_{-\infty}^L\big(u(x;s_1)-u(x;s_1') \big)\rmd x = 0.
\endeq
This verifies equation \eref{cont-in-s1}.
\end{proof}
We now compute the integral of the generic purely imaginary solutions.
The $O(x^{1/2})$ term in the asymptotic expansion \eref{pi-sing-pinf} as
$x\to+\infty$ is not integrable,
so we will subtract it off as in the Hastings-McLeod case.  The
$O(x^{-1/4})$ term is integrable because of the cosine factor.  However, the
$O(x^{-1})$ term is not integrable, so it must be computed and subtracted
off as well.   The explicit form of the $O(x^{-1})$  correction to
the asymptotics  \eref{pi-sing-pinf} was formally calculated via the
analysis of
a certain nonlinear integral equation equivalent to (\ref{pII}) in
\cite{kapaev:thesis}.
The asymptotic  expansion for $u(x)$ up to the
$O(x^{-1})$ terms turns out to be
\eq
\label{pi-sing-pinf2}
\eqalign{
\fl u(x) = \rmi\sigma\sqrt{\frac{x}{2}}+\frac{\rmi\sigma\rho}{(2x)^{1/4}}
\cos\left(\frac{2\sqrt{2}}{3}x^{3/2}-\frac{3}{2}\rho^2\log x +
\theta\right) -\frac{3\rmi\sigma\rho^2}{4x} \\
 + \frac{\rmi\sigma \rho^2}{4x}\cos\left( 2 \left[
\frac{2\sqrt{2}}{3}x^{3/2}-\frac{3}{2}
\rho^2\log x + \theta \right] \right) + O(x^{-3/2}), \quad x\to+\infty.}
\endeq
With the first two terms already known, the third and the forth terms of
this formula
(and, in principal, the terms of an arbitrary higher order)
can be formally derived via substitution into the Painlev\'e equation
(\ref{pII}) (or to the nonlinear integral equation of
\cite{kapaev:thesis}).
It should be emphasized that even the formal
derivation of (\ref{pi-sing-pinf2}) is quite challenging; indeed, because of
the presence of the growing term $\sqrt{x/2}$, it is  much more difficult
than the similar
derivation of the correction terms to the semi-linear asymptotics
(\ref{pr-as-minf}).
A serious additional question is the justification of the asymptotics
(\ref{pi-sing-pinf2})
which can be in principal done using a priori information of the
structure of the
asymptotic series which in turn can be extracted from the Riemann-Hilbert
analysis
(compare to the approach of \cite{deift2}).
In Section \ref{direct-computation} we will present an alternative and
rigorous
derivation of (\ref{pi-sing-pinf2}) using the direct asymptotic analysis
of the Riemann-Hilbert problem (\ref{rhp}).  It also should be noticed
that, in fact, we do
not need to know the $O(x^{-1})$ terms a priori in the proof of Theorem
\ref{pi-sing-thm}.  The $O(x^{-1})$ term that must be subtracted off to
make the integral finite arises naturally during the computation.  However,
note that the oscillatory term of $O(x^{-1})$ in \eref{pi-sing-pinf2} will
not be subtracted off because it is integrable.
\begin{thm} \label{pi-sing-thm} {\bf [Generic purely imaginary solutions]}  
Suppose that $u(x)$ is a solution to the Painlev\'e II equation \eref{pII} 
with monodromy data $\Im(s_1)\neq 0$, $s_3=-\overline{s_1}$, 
$s_2=(s_1-\overline{s_1})/(1+|s_1|^2)$ (that is, with asymptotics given by 
\eref{pi-sing-minf} and \eref{pi-sing-pinf}).   Define $\rho^2$ as in 
\eref{rho-sigma-theta}.
Then, for any $c>0,$ there exists $m\in\mathbb{Z}$ such that
\eq
\label{pi-sing-integral}
\eqalign{
\fl \int_{-\infty}^c u(y)\rmd y + \int_c^\infty\left(u(y)-\rmi\sigma\sqrt{\frac{y}{2}}+\rmi\sigma\frac{3\rho^2}{4y}\right)\rmd y \\
= \rmi\sigma\bigg\{\arg(1+\rmi \sigma s_1) - \frac{5\rho^2}{4}\log 2 + \arg\left(\Gamma\left(\frac{1}{2}+\rmi\frac{\rho^2}{2}\right)\right) \\ 
\hspace{.2in} + \frac{\sqrt{2}}{3}c^{3/2} - \frac{3\rho^2}{4}\log c +2\pi m \bigg\},}
\endeq
where $\sigma:=-{\mbox sgn}(\Im(s_1))$ and $\Gamma$ denotes the Gamma function.
\end{thm}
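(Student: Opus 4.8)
The plan is to reuse the mechanism of Theorems~\ref{pr-as-theorem} and~\ref{hm-theorem}: track the boundary value $P(x):=\Psi_2(0;x)$, which by~\eref{P} obeys $P(x)=\rme^{-U(a,x)\sigma_2}P(a)$ with $U(a,x)=\int_a^x u(y)\,\rmd y$, compute $P(x)$ independently from the Riemann--Hilbert problem~\eref{rhp} at both ends, and match. The crucial structural difference from the Hastings--McLeod case is that here $u$ is purely imaginary, so $U$ is purely imaginary and the factor $\rme^{-U\sigma_2}=\bpm\cosh U&\rmi\sinh U\\-\rmi\sinh U&\cosh U\epm$ is oscillatory rather than exponentially large; the unbounded growth coming from the $\rmi\sigma\sqrt{x/2}$ term in~\eref{pi-sing-pinf} is carried by the unbounded \emph{argument} of the trigonometric entries, not by a diverging amplitude. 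Since the $x\to-\infty$ asymptotics~\eref{pi-sing-minf} are (conditionally) integrable exactly as in the Ablowitz--Segur case, the limit $P_-:=\lim_{x\to-\infty}P(x)$ exists, and sending $a\to-\infty$ in~\eref{P} gives the master identity $P(x)=\rme^{-U(-\infty,x)\sigma_2}P_-$ valid for all $x$.

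First I would analyze the left end. As noted in the statement preamble, the $x\to-\infty$ analysis of $\Psi_1(0;x)$ is identical for the Ablowitz--Segur and generic purely imaginary solutions, so the same steepest-descent computation used for~\eref{Psi-minusinf-as} applies and yields $\lim_{x\to-\infty}\Psi_1(0;x)=\frac{1}{\sqrt{1-s_1s_3}}\bpm 1&-s_3\\-s_1&1\epm$, which I record as~\eref{Psi-minusinf-sing}. Multiplying on the right by the jump $\bpm1&0\\s_1&1\epm$ across $\gamma_1$ and specializing $s_3=-\overline{s_1}$ (so $1-s_1s_3=1+|s_1|^2$) produces the explicit constant matrix $P_-=\lim_{x\to-\infty}\Psi_2(0;x)$.

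The heart of the proof is the right end. I would carry out the Deift--Zhou steepest-descent analysis of~\eref{rhp} as $x\to+\infty$ for the generic case $s_2\neq0$. Because $u\sim\rmi\sigma\sqrt{x/2}$ grows, a $g$-function must be introduced as in the Hastings--McLeod analysis, but now $s_2\neq0$ keeps all six rays active and forces a genuinely oscillatory model problem; the local parametrix near the coalescing stationary point is built from confluent-hypergeometric/parabolic-cylinder functions, whose connection matrices are exactly what generate the $\arg\Gamma\!\left(\tfrac12+\rmi\tfrac{\rho^2}{2}\right)$ and $\rho^2\log 2$ constants, while the slowly varying phase $-\tfrac32\rho^2\log x$ emerges from the same model. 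This gives $\Psi_2(0;x)=R(x)M_++o(1)$ with $R(x)$ an explicit growing--oscillating rotation-type factor and $M_+$ a constant matrix. To regularize correctly one needs the expansion~\eref{pi-sing-pinf2} through $O(x^{-1})$, whose rigorous derivation I defer to Section~\ref{direct-computation}; the non-integrable $O(x^{-1})$ term is precisely what dictates the $\tfrac{3\rho^2}{4y}$ counterterm appearing in the integrand of~\eref{pi-sing-integral}.

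Finally I would match. Equating the two descriptions of $\Psi_2(0;x)$ gives $\rme^{-U(-\infty,x)\sigma_2}P_-=R(x)M_++o(1)$. Writing $I_{\mathrm{reg}}$ for the left-hand side of~\eref{pi-sing-integral}, the explicit antiderivative of the subtracted terms yields $U(-\infty,x)=I_{\mathrm{reg}}+\rmi\sigma\big[\tfrac{\sqrt2}{3}x^{3/2}-\tfrac{3\rho^2}{4}\log x-\tfrac{\sqrt2}{3}c^{3/2}+\tfrac{3\rho^2}{4}\log c\big]+o(1)$. The growing and $\log x$ parts of the rotation angle coincide identically with the phase carried by $R(x)$, since both encode the same expansion~\eref{pi-sing-pinf2}; what remains after they cancel is a single scalar relation for $I_{\mathrm{reg}}$, extracted from one matrix entry (as the (21)-entry was used in the earlier proofs) and solved by taking a logarithm. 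Because $u$ is purely imaginary, that logarithm is determined only modulo $2\pi\rmi$, which is exactly the source of the undetermined $m\in\mathbb{Z}$; unlike the purely real cases of Theorems~\ref{pr-as-theorem} and~\ref{hm-theorem}, there is no reality or sign constraint available to pin $m$ down, so it must stay in the statement. Collecting the constants from $M_+$, $P_-$, and the phase bookkeeping then reproduces~\eref{pi-sing-integral}. The main obstacle I anticipate is the $x\to+\infty$ steepest-descent step, and within it the correct accounting of the $O(x^{-1})$ term and the $\Gamma$-function constant from the oscillatory local parametrix; the bulk of this analysis rests on \cite{Its:1988,Deift:1995} together with Section~\ref{direct-computation}.
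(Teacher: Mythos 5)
Your proposal follows essentially the same route as the paper: track the boundary value of $\Psi$ at $\lambda=0$, compute its limits at both ends by steepest descent (with the parabolic-cylinder parametrix at $+\infty$ producing the $\Gamma$ and $\log 2$ constants), match the two representations, and accept the $2\pi m$ ambiguity from taking a logarithm of a unimodular quantity. Two small refinements in the paper's execution are worth noting: it isolates the single exponential $\rme^{U(-\infty,x)}$ by taking the $\sigma_2$-eigenvector combination $P_{11}+\rmi P_{21}$ rather than a single matrix entry (a single entry mixes $\rme^{U}$ and $\rme^{-U}$, both oscillatory here), and it does not need the expansion \eref{pi-sing-pinf2} a priori --- the $3\rho^2/(4y)$ counterterm emerges automatically from the $t^{\nu/2}$ factor in the parametrix.
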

This result determines the total integral up to an additive factor of 
$2\pi \rmi m$ for some $m\in\mathbb{Z}$.
\begin{proof}
Since the solutions are integrable for $x$ near $-\infty$, pick $a=-\infty$ 
and consider $U(-\infty,x)$.  For convenience we consider $\Psi_1$.  Set
\eq
P(x) := \Psi_1(0;x).
\endeq
Now we compute 
\eq
C = \lim_{x\to-\infty}P(x)
\endeq
using the methods in \cite{Fokas:2006-book}.  Start with the scalings
\eq
\eqalign{
z:=\frac{\lambda}{(-x)^{1/2}}, \quad t:=(-x)^{3/2}, \\ 
\Psi(z;t):=\Psi(\lambda(z);x), \quad \phi(z):=\rmi \frac{4}{3}z^3-\rmi z.}
\endeq
The solution $\Psi(z;t)$ can be transformed using standard algebraic 
manipulations to $\Psi^{\rm def}(z;t)$ which solves the Riemann-Hilbert 
problem on the deformed contour shown in figure \ref{pII-as-rhp}, wherein 
\eq
\fl S_L:=\bpm 1 & 0 \\ \frac{s_1}{1-s_1s_3} & 1 \epm, \quad S_D:=\bpm 1-s_1s_3 & 0 \\ 0 & \frac{1}{1-s_1s_3} \epm, \quad S_U:= \bpm 1 & \frac{s_1}{1-s_1s_3} \\ 0 & 1 \epm.
\endeq
The normalization for the deformed Riemann-Hilbert problem is
\eq
\Psi^{\rm def}(z;t)\rme^{\phi(z)t\sigma_3} = I + O(z^{-1}) \mbox{ as } z\to\infty.
\endeq
In particular,
\eq
\Psi_1(\lambda;x) = \Psi^{\rm def}(z;t)\bpm \frac{1}{1-s_1s_3} & \frac{-s_3}{1-s_1s_3} \\ -s_1 & 1 \epm 
\endeq
for $\lambda\in\Omega_1\cap\{\Im\lambda>0\}\cap\left\{|\lambda|<1/4\right\}$ 
when $x\leq-1$.
\begin{figure}[ht]
\begin{center}
\epsfig{file=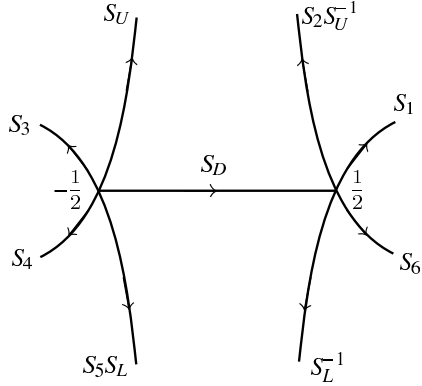, height=2in}
\caption{The deformed Riemann-Hilbert problem for the generic purely imaginary solutions and the Ablowitz-Segur solutions to Painlev\'e II as $x\to-\infty$.}
\label{pII-as-rhp}
\end{center}
\end{figure}
The jump matrices for $\Psi^{\rm def}(z;t)\rme^{t\phi(z)\sigma_3}$ off the real interval 
$\left[-\frac{1}{2},\frac{1}{2}\right]$ decay 
to the identity as $t\to\infty$.  Indeed, from page 328 in \cite{Fokas:2006-book}, 
\begin{equation}\label{new1}
\Psi^{\rm def} = \widehat{\Psi}^D(I+O(t^{-1/2})),
\endeq 
where 
$\widehat{\Psi}^D(z;t)$ solves the model problem
\eq
    \cases{
        \widehat{\Psi}^D(z;t) \mbox{ is analytic off } \left[-\frac{1}{2},\frac{1}{2}\right] \\
        \widehat{\Psi}^D(z;t) \mbox{ does not have a non-square integrable} \\ \hspace{.65in} \mbox{singularity at the endpoints } \pm\frac{1}{2}\\
        \widehat{\Psi}_+^D(z;t)= \widehat{\Psi}_-^D(z;t) S_D, \qquad z\in\left[-\frac{1}{2},\frac{1}{2}\right]\\
        \widehat{\Psi}^D(z;t)\rme^{t\theta(z)\sigma_3} \to I, \qquad z\to \infty, \quad \theta(z):=\rmi\frac{4}{3}z^3-\rmi z.}
\endeq
This problem is solved by
\eq \label{new2}
\widehat{\Psi}^D(z;t) = \bpm f(z) & 0 \\ 0 & \frac{1}{f(z)} \epm \rme^{-t\theta(z)\sigma_3}
\endeq
\eq
f(z):=\left(\frac{z+\frac{1}{2}}{z-\frac{1}{2}}\right)^{\mu}, \quad \mu:=\frac{-1}{2\pi \rmi}\log(1-s_1s_3).
\endeq
The function $f(z)$ is defined with its branch cut on 
$\left[-\frac{1}{2},\frac{1}{2}\right]$ and satisfies $f(z)\to 1$ as $z\to\infty$.  
It follows that 
\eq
\eqalign{
\fl \lim_{x\to-\infty}P(x) = \lim_{x\to-\infty}\Psi_1(0;x) & = \lim_{t\to-\infty}\Psi^{\rm def}_+(0;t)\bpm \frac{1}{1-s_1s_3} & \frac{-s_3}{1-s_1s_3} \\ -s_1 & 1 \epm \\
& = \widehat{\Psi}^D_+(0;t) \bpm \frac{1}{1-s_1s_3} & \frac{-s_3}{1-s_1s_3} \\ -s_1 & 1 \epm.}
\endeq
Using $f_+(0)=\sqrt{1-s_1s_3}$ gives
\eq
\label{Psi-minusinf-sing}
C=\lim_{x\to-\infty}\Psi_1(0;x) = \frac{1}{\sqrt{1-s_1s_3}}\bpm 1 & -s_3 \\ -s_1 & 1 \epm.
\endeq
We note specifically that this gives
\eq
\label{P11+iP21}
(P(x))_{11} + \rmi(P(x))_{21} = \frac{1-\rmi s_1}{\sqrt{1+|s_1|^2}}\rme^{U(-\infty,x)}.
\endeq

Now we analyze $P(x)$ as $x\to+\infty$.  This limit does not exist since 
$u(x)$ is not integrable at $+\infty$.  However, the limit of 
$P(x)$ times an appropriate decaying factor will exist.  In 
\cite{Fokas:2006-book} (see page 346) it is shown that 
\eq
\label{Psi-hat-error}
\Psi(\lambda;x) = (I+O(x^{-3/4}))\widehat{\Psi}(z;x),
\endeq
where $\widehat{\Psi}(z;x)$ is the solution to a model Riemann-Hilbert 
problem.  Let $\widehat{\Psi}_1$ be $\widehat{\Psi}$ in the region 
$\Omega_1$.  Then, by the computations in \cite{Fokas:2006-book},
\eq
\label{Psi1hat0}
\eqalign{
\fl \widehat{\Psi}_1(0;x) = \frac{1}{\sqrt{2}}\bpm 1 & \rmi\sigma \\ \rmi\sigma & 1 \epm \rme^{-\rmi \pi\nu\sigma_3/4}2^{7\nu\sigma_3/4}(2Q)^{-\sigma_3/2} \\
\times \rme^{-\rmi t\sqrt{2}\sigma_3/3}t^{\nu\sigma_3/2}\sigma_1\mathop{\lim_{z\to0}}_{z\in\Omega_1}Z^{\rm RH}(\zeta(z)),}
\endeq
where
\eq\label{def:centercircle}
\eqalign{
z:=\frac{\lambda}{x^{1/2}}, \quad t := x^{3/2}, \quad \nu:=\frac{1}{\rmi\pi}\log(\rmi\sigma s_2), Q:=\rmi \Gamma(\nu+1)\frac{1+s_1s_2}{\sqrt{2\pi}s_2}, \\ 
\zeta(z) := 2\sqrt{\rmi t\frac{\sqrt{2}}{3}-\rmi t\mbox{sgn}(\Re z)\frac{4}{3}\left(z^2+\frac{1}{2}\right)^{3/2}} }
\endeq
and $Z^{\rm RH}(\zeta)$ is a function built out of parabolic cylinder 
functions.  Specifically, for $z\in\Omega_1$,
\eq
\hspace{-.1in}Z^{\rm RH}(\zeta):=2^{-\sigma_3/2}\bpm D_{-\nu-1}(\rmi\zeta) & D_\nu(\zeta) \\ \frac{\rmd}{\rmd \zeta}D_{-\nu-1}(\rmi\zeta) & \frac{\rmd}{\rmd \zeta}D_\nu(\zeta) \epm \bpm \rme^{\rmi \pi(\nu+1)/2} & 0 \\ 0 & 1 \epm Q^{\sigma_3/2},
\endeq
where $D_\nu(\zeta)$ is Whittaker's parabolic cylinder function satisfying 
\eq
\frac{\rmd^2D_\nu}{\rmd\zeta^2} + \left(\nu+\frac{1}{2}-\frac{\zeta^2}{4}\right)D_\nu = 0.
\endeq
Note that $\widehat{\Psi}_1(0;x)$ is uniformly bounded independent of $x$.  
From Whittaker and Watson \cite{Whittaker:1927-book}, Section 16.5,
\eq
\label{ZRH}
\eqalign{
\fl \bpm Z_{11} & Z_{12} \\ Z_{21} & Z_{22} \epm := \mathop{\lim_{z\to0}}_{z\in\Omega_1}Z^{\rm RH}(\zeta(z)) \\ 
\hspace{-.1in}= 2^{-\sigma_3/2}\bpm \frac{\Gamma(\frac{1}{2})}{\Gamma(1+\frac{\nu}{2})}2^{-\nu/2-1/2} & \frac{\Gamma(\frac{1}{2})}{\Gamma(\frac{1}{2}-\frac{\nu}{2})}2^{\nu/2} \\ \rmi\frac{\Gamma(-\frac{1}{2})}{\Gamma(\frac{1}{2}+\frac{\nu}{2})}2^{-\nu/2-1} & \frac{\Gamma(-\frac{1}{2})}{\Gamma(-\frac{\nu}{2})}2^{\nu/2-1/2} \epm \bpm \rme^{\rmi \pi(\nu+1)/2} & 0 \\ 0 & 1 \epm Q^{\sigma_3/2}.}
\endeq
Equation \eref{Psi1hat0} implies
\eq
(\widehat{\Psi}_1(0;x))_{11} + \rmi(\widehat{\Psi}_1(0;x))_{21} = \frac{1}{\sqrt{2}}[\rmi\delta^{-1}(1+\sigma)Z_{11} + \delta(1-\sigma)Z_{21}],
\endeq
where
\eq
\delta:=\rme^{-\rmi \pi\nu/4}2^{7\nu/4}(2Q)^{-1/2}\rme^{-\rmi t\sqrt{2}/3}t^{\nu/2}.
\endeq
Assume for the moment that $\sigma=+1$.  Then
\eq
\label{Psi11+iPsi21}
(\widehat{\Psi}_1(0;x))_{11} + \rmi(\widehat{\Psi}_1(0;x))_{21} = 2\rmi\rme^{\rmi \pi\nu/4}2^{-7\nu/4}Q^{1/2}Z_{11}\rme^{\rmi t\sqrt{2}/3}t^{-\nu/2}.
\endeq
Now from \eref{Psi-hat-error} and using the fact that $\widehat{\Psi}_1(0;x)$ 
is uniformly bounded independent of $x$ we find
\eq
\eqalign{
\fl \lim_{x\to+\infty}(P(x)_{11}+\rmi P(x)_{21})\rme^{-\rmi x^{3/2}\sqrt{2}/3}x^{3\nu/4} \\ 
   = \lim_{x\to+\infty}((\widehat{\Psi}_1(0;x))_{11}+\rmi(\widehat{\Psi}_1(0;x))_{21})\rme^{-\rmi x^{3/2}\sqrt{2}/3}x^{3\nu/4} \\
   = 2\rmi\rme^{\rmi \pi\nu/4}2^{-7\nu/4}Q^{1/2}Z_{11}.}
\endeq
Along with \eref{P11+iP21} this gives
\eq
\label{sing-imag-x-limit}
\eqalign{
\fl \lim_{x\to+\infty}\exp\left(U(-\infty,x)-\rmi x^{3/2}\sqrt{2}/3+(3\nu/4)\log(x)\right) \\ 
= \frac{\sqrt{1+|s_1|^2}}{1-\rmi s_1}2\rmi\rme^{\rmi \pi\nu/4}2^{-7\nu/4}Q^{1/2}Z_{11}.}
\endeq
Writing $U(-\infty,x)=U(-\infty,c)+U(c,x)$, \eref{sing-imag-x-limit} implies 
that
\eq
\eqalign{
\fl \lim_{x\to+\infty}\exp\left(\int_{-\infty}^c u(y)\rmd y + \int_c^x\left(u(y)-\rmi \sqrt{\frac{y}{2}} + \frac{3\nu}{4y}\right)\rmd y-\rmi\frac{\sqrt{2}}{3}c^{3/2}+\frac{3\nu}{4}\log c \right)  \\
= \frac{\sqrt{1+|s_1|^2}}{1-\rmi s_1}2\rmi\rme^{\rmi \pi\nu/4}2^{-7\nu/4}Q^{1/2}Z_{11}}
\endeq
for any $c>0$ when $\sigma=+1$.  It follows that the integral
 $\int_c^x\left(u(y)-\rmi \sqrt{y/2} + 3\nu/4y\right)\rmd y$ is convergent, and hence 
\eq
\label{gen-pi-algebra}
\eqalign{
\fl \exp\left(\int_{-\infty}^c u(y)\rmd y + \int_c^\infty\left(u(y)-\rmi \sqrt{\frac{y}{2}}+\frac{3\nu}{4y}\right)\rmd y\right) \\
\fl = -\frac{\sqrt{1+|s_1|^2}}{1-\rmi s_1}\frac{(1+s_1s_2)}{2^{1/2}s_2}\frac{\Gamma(1+\nu)}{\Gamma(1+\frac{\nu}{2})}\rme^{3\rmi\pi\nu/4}2^{-9\nu/4}\rme^{\rmi \pi/2}\exp\left(\rmi \frac{\sqrt{2}}{3}c^{3/2} - \frac{3\nu}{4}\log c\right) \\
\fl = \frac{1+\rmi s_1}{2^{1/2}(1+|s_1|^2)^{1/4}|s_1-\overline{s_1}|^{1/4}}\frac{\Gamma(1+\nu)}{\Gamma(1+\frac{\nu}{2})}2^{-9\nu/4}\exp\left(\rmi \frac{\sqrt{2}}{3}c^{3/2} - \frac{3\nu}{4}\log c\right) \\
\fl = \frac{(1+\rmi s_1)\Gamma\left(\frac{1}{2}+\frac{\nu}{2}\right)}{(2\pi)^{1/2}(1+|s_1|^2)^{1/4}|s_1-\overline{s_1}|^{1/4}}2^{-5\nu/4}\exp\left(\rmi \frac{\sqrt{2}}{3}c^{3/2} - \frac{3\nu}{4}\log c\right).}
\endeq
The first equality follows from the definitions of $Q$ and $Z_{11}$ in 
\eref{def:centercircle} and \eref{ZRH}, respectively, the second follows 
from $s_2=(s_1-\overline{s_1})/(1+|s_1|^2)$, and the third follows from the 
identity (see (6.1.18) in \cite{Abramowitz:1965-book})
\eq
\Gamma(2z) = \frac{1}{\sqrt{\pi}}2^{2z}\Gamma(z)\Gamma\left(z+\frac{1}{2}\right).
\endeq
The fact that the right-hand side of \eref{gen-pi-algebra} has modulus $1$ 
follows automatically from the fact that $u(x)$ is purely imaginary.  
However, this can also be checked directly using (6.1.29-31) in 
\cite{Abramowitz:1965-book}.  Equation \eref{pi-sing-integral} with 
$\sigma=+1$ follows by taking the 
logarithm of both sides of \eref{gen-pi-algebra} and setting $\nu=-\rmi \rho^2$.

Now assume $\sigma=-1$.  This result can be obtained from the $\sigma=+1$ 
case via Lemma \ref{symmetry-lemma}.  We also give a direct proof as follows.  
Using the definition of $Z_{21}$ in \eref{ZRH},
\eq
\hspace{-.4in}(\widehat{\Psi}_1(0;x))_{11} + \rmi(\widehat{\Psi}_1(0;x))_{21} = \frac{\Gamma(-\frac{1}{2})}{\Gamma(\frac{1}{2}+\frac{\nu}{2})}2^{-1/2}\rmi\rme^{\rmi \pi(\nu+2)/4}2^{5\nu/4}\rme^{-\rmi t\sqrt{2}/3}t^{\nu/2}.
\endeq
From \eref{Psi-hat-error} and the fact that $\widehat{\Psi}_1(0;x)$ 
is uniformly bounded independent of $x$, we see
\eq
\eqalign{
\fl \lim_{x\to+\infty}(P(x)_{11}+\rmi P(x)_{21})\rme^{\rmi x^{3/2}\sqrt{2}/3}x^{-3\nu/4} \\ 
   = \lim_{x\to+\infty}((\widehat{\Psi}_1(0;x))_{11}+\rmi(\widehat{\Psi}_1(0;x))_{21})\rme^{\rmi x^{3/2}\sqrt{2}/3}x^{-3\nu/4} \\
   = \frac{\Gamma(-\frac{1}{2})}{\Gamma(\frac{1}{2}+\frac{\nu}{2})}2^{-1/2}\rmi\rme^{\rmi \pi(\nu+2)/4}2^{5\nu/4}.}
\endeq
From \eref{P11+iP21},
\eq
\label{sing-imag-x-limit-msigma}
\eqalign{
\fl \lim_{x\to+\infty}\exp\left(U(-\infty,x)+\rmi x^{3/2}\sqrt{2}/3-(3\nu/4)\log(x)\right) \\ 
= \frac{\sqrt{1+|s_1|^2}}{1-\rmi s_1}\frac{\Gamma(-\frac{1}{2})}{\Gamma(\frac{1}{2}+\frac{\nu}{2})}2^{-1/2}\rmi\rme^{\rmi \pi(\nu+2)/4}2^{5\nu/4}.}
\endeq
Writing $U(-\infty,x)=U(-\infty,c)+U(c,x)$, \eref{sing-imag-x-limit-msigma} 
shows that
\eq
\eqalign{
\fl \lim_{x\to+\infty}\exp\left(\int_{-\infty}^c u(y)\rmd y + \int_c^x\left(u(y)+\rmi \sqrt{\frac{y}{2}} - \frac{3\nu}{4y}\right)\rmd y+\rmi \frac{\sqrt{2}}{3}c^{3/2}-\frac{3\nu}{4}\log c \right) \\
  = \frac{\sqrt{1+|s_1|^2}}{1-\rmi s_1}\frac{\Gamma(-\frac{1}{2})}{\Gamma(\frac{1}{2}+\frac{\nu}{2})}2^{-1/2}\rmi\rme^{\rmi \pi(\nu+2)/4}2^{5\nu/4} \\
  = \frac{(2\pi)^{1/2}(1+|s_1|^2)^{1/4}|s_1-\overline{s_1}|^{1/4}}{(1-\rmi s_1)\Gamma(\frac{1}{2}+\frac{\nu}{2})}2^{5\nu/4}}
\endeq
for any $c>0$ if $\sigma=-1$.  Therefore the integral 
$\int_c^x\left(u(y)+\rmi \sqrt{y/2} - 3\nu/4y\right)\rmd y$ is convergent, and hence 
\eref{pi-sing-integral} with $\sigma=-1$ follows by taking logarithms 
and using $\nu=\rmi \rho^2$.
\end{proof}

\section{ Direct computation of asymptotics of $\boldsymbol {u(x)}$ in the generic purely imaginary solutions}
\label{direct-computation}
In both \cite{Deift:1995} and  \cite{Fokas:2006-book} the authors write down asymptotic expansions  of the purely imaginary solutions to the Painlev\'e II equation for large positive $x$. In this section we calculate the higher order terms for these expansions. Specifically, we will calculate 
the $O(x^{-1})$ terms in the asymptotic 
expansion \eref{pi-sing-pinf} of the generic purely imaginary solution 
as $x\rightarrow +\infty$ and show:
\begin{thm}
\label{sing-imag-asymp}
Let $u(x)$ be a generic purely imaginary solution of the Painlev\'e II 
equation \eref{pII} with asymptotic expansion \eref{pi-sing-minf} as 
$x\to-\infty$.  Then
\eq
\eqalign{
\fl u(x) = \rmi\sigma\sqrt{\frac{x}{2}}+\frac{\rmi\sigma\rho}{(2x)^{1/4}}\cos\left(\frac{2\sqrt{2}}{3}x^{3/2}-\frac{3}{2}\rho^2\log x + \theta\right) -\frac{3\rmi\sigma\rho^2}{4x} \\
+ \frac{\rmi\sigma \rho^2}{4x}\cos\left( 2 \left[ \frac{2\sqrt{2}}{3}x^{3/2}-\frac{3}{2}\rho^2\log x + \theta \right] \right) + O(x^{-3/2}), \quad x\to+\infty,}
\endeq
where $\sigma$, $\rho$, and $\theta$ are defined in \eref{rho-sigma-theta}.
\end{thm}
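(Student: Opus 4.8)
The plan is to read the expansion of $u(x)$ directly off the solution of the Riemann--Hilbert problem \eref{rhp} via the moment formula \eref{u-from-Psi}, by pushing the Deift--Zhou steepest-descent analysis already assembled for $x\to+\infty$ in the proof of Theorem \ref{pi-sing-thm} two orders further. Writing $\Psi(\lambda;x)\rme^{\theta(\lambda;x)\sigma_3} = I + m_1(x)/\lambda + O(\lambda^{-2})$ as $\lambda\to\infty$, formula \eref{u-from-Psi} gives $u(x) = 2(m_1(x))_{12}$, so the entire expansion is encoded in the $1/\lambda$--coefficient of $\Psi\rme^{\theta\sigma_3}$ in the outer region $z=\lambda/x^{1/2}\to\infty$. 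First I would reuse the model solution $\widehat{\Psi}(z;x)$ built from the parabolic-cylinder functions $D_\nu(\zeta),\,D_{-\nu-1}(\rmi\zeta)$ of \eref{Psi1hat0}--\eref{ZRH}, together with \eref{Psi-hat-error}, and set $R(z;x):=\Psi\,\widehat{\Psi}^{-1}$, which solves a small-norm Riemann--Hilbert problem with $R = I + O(x^{-3/4})$. The essential difference from Theorem \ref{pi-sing-thm} is that there $\widehat{\Psi}$ was evaluated at $z=0$, whereas here I need its \emph{large}-$z$ expansion, governed by the normalization $\widehat{\Psi}\rme^{\theta\sigma_3}\to I$ and its $1/z$ corrections.

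The core of the computation is to refine the small-norm problem. I would expand the jump matrix of $R$, which is supported near the turning point and inherits its $x$--dependence from the large-argument asymptotics of the parabolic-cylinder functions, in descending powers of $t^{1/2}$ with $t=x^{3/2}$, and then solve for $R = I + t^{-1/2}R^{(1)} + t^{-1}R^{(2)} + O(t^{-3/2})$ by the usual Cauchy-operator/residue iteration. Since $\Psi\rme^{\theta\sigma_3}=R\,\widehat{\Psi}\rme^{\theta\sigma_3}$ and $\lambda=x^{1/2}z$, a $1/z$--coefficient of size $t^{-k/2}$ produces a contribution of size $x^{1/2-3k/4}$ to $u$; thus the leading $\rmi\sigma\sqrt{x/2}$ comes from the model normalization, the oscillatory $O(x^{-1/4})$ term of \eref{pi-sing-pinf} from $R^{(1)}$, and the new $O(x^{-1})$ terms precisely from $R^{(2)}$. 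Collecting the $(12)$--entries, multiplying by $2x^{1/2}$, and simplifying the resulting $\Gamma$--function prefactors (using the reflection and duplication identities already invoked around \eref{gen-pi-algebra}) should yield the stated expansion for $\sigma=+1$; the case $\sigma=-1$ then follows from the $\sigma=+1$ case by Lemma \ref{symmetry-lemma}, exactly as in the proof of Theorem \ref{pi-sing-thm}, since the substitution $(s_1,s_2,s_3)\mapsto(-s_1,-s_2,-s_3)$ flips $\sigma$ while fixing $\rho$ and $\theta$ in \eref{rho-sigma-theta}.

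The main obstacle will be the bookkeeping in the parabolic-cylinder asymptotics. To reach order $x^{-1}$ one must retain both the dominant $\zeta^{\nu}\rme^{-\zeta^2/4}$ and the recessive $\zeta^{-\nu-1}\rme^{\zeta^2/4}$ contributions of $D_\nu$ together with their first subleading corrections, and the two contributions are threaded through the oscillatory factors $\rme^{\pm\rmi t\sqrt{2}/3}$. It is exactly the cross term between these two exponentials, surviving at order $x^{-1}$, that should produce the double-frequency term $\frac{\rmi\sigma\rho^2}{4x}\cos\bigl(2[\frac{2\sqrt2}{3}x^{3/2}-\frac{3}{2}\rho^2\log x+\theta]\bigr)$, while the diagonal contributions produce the non-oscillatory $-\frac{3\rmi\sigma\rho^2}{4x}$ term. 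Pinning down the numerical coefficients $-\frac{3}{4}$ and $+\frac{1}{4}$ and confirming that the constant phase is the same $\theta$ of \eref{rho-sigma-theta} is the delicate part; a useful final consistency check is that the entire right-hand side must be purely imaginary, which is forced by the monodromy constraints and was already exploited after \eref{gen-pi-algebra}.
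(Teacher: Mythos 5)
Your overall strategy is the one the paper actually follows: form the ratio of $\Psi$ to the local model, treat it as a small-norm Riemann--Hilbert problem, iterate the Cauchy-operator (Neumann) expansion to second order, and read the expansion of $u$ off the $1/z$ moment (the paper packages this as the function $\chi$ of \eref{u-from-chi} and the two integrals in \eref{errorterms}). Your reduction of $\sigma=-1$ to $\sigma=+1$ via Lemma \ref{symmetry-lemma} is also sound, since $(s_1,s_2,s_3)\mapsto(-s_1,-s_2,-s_3)$ flips $\sigma$ and fixes $\rho$ and $\theta$. The gap is in your accounting of where the $O(x^{-1})$ terms originate. The jump of the ratio problem is supported not only on a small circle about $z=0$, where the parabolic-cylinder parametrix lives and whose large-$\zeta$ asymptotics you describe, but also on circles about the two turning points $z=\pm\rmi/\sqrt{2}$, where the local parametrices are built from \emph{Airy} functions. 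The first subleading ($O(\zeta^{-3/2})=O(t^{-1})$) corrections of the Airy asymptotics on those circles contribute $-\sigma\nu/(2x)$ to $u$ (the paper's assertion 3), which is two thirds of the non-oscillatory coefficient. Your scheme attributes the entire $O(x^{-1})$ term to the parabolic-cylinder expansion; that source (the second iterate of the $O(t^{-1/2})$ jump on the origin circle) yields only $-\sigma\nu/(4x)+\frac{\sigma\nu}{4x}\cos(2[\cdots])$, so you would land on $-\rmi\sigma\rho^2/(4x)$ in place of the correct $-3\rmi\sigma\rho^2/(4x)$. The double-frequency cosine and its coefficient $+\frac{1}{4}$ do come out of the cross structure you describe (in the paper, from the commutator $A[B_0'(0)B_0(0)^{-1},A]$ built on the residue of the approximate jump at $z=0$), so that part of your outline is consistent.

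Two smaller points. First, the subleading diagonal corrections of the parabolic-cylinder parametrix that you propose to retain in fact contribute nothing at this order: the $(12)$ entry of the corresponding correction to the jump has vanishing residue at $z=0$ because $\beta(0)+\beta^{-3}(0)=0$ (the paper's assertion 1). This is a pleasant cancellation, but it must be verified rather than assumed, since a nonzero value would enter $u$ at exactly $O(x^{-1})$. Second, when you restrict the quadratic (second-iterate) term to the origin circle you need the operator bounds $\|G-I\|_{L^2\cap L^\infty}\leq c t^{-1/2}$ on the whole contour and $\leq ct^{-1}$ off the origin circle to justify discarding all other double-integral contributions at $O(t^{-3/2})$; this is routine but is part of making the ``residue iteration'' rigorous.
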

Much of the notation is inherited from \cite{Fokas:2006-book}.  
We note that 
\eq
\nu = \rmi\rho^2, \quad |\nu| = \rho^2, \quad t=x^{3/2}.
\endeq
Here the solution $u(x)$ to (\ref{pII}) is obtained as
\begin{equation}
\label{u-from-chi}
u(x)=\rmi \sigma\sqrt{\frac{x}{2}}+2\sqrt{x}\lim_{z\rightarrow\infty}\left(z\chi_{12}(z)\right),
\end{equation}
where $\chi_{12}(z)$ is the $12$ entry of the $2\times2$ matrix valued function that solves the {\it ratio} Riemann-Hilbert problem
\eq
\cases{
\chi \mbox{ is analytic in } \mathbb{C}\backslash \gamma\\
\chi(z)\rightarrow I \mbox{ as } z\rightarrow\infty\\
\chi_+(z)=\chi_-(z)G(z) \mbox{ on the contours } \gamma.}
\endeq
\begin{figure}[h]
\begin{center}
\epsfig{file=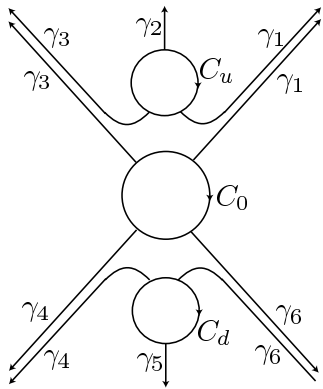, height=2in}
\caption{The contour $\gamma$ for the Riemann-Hilbert problem for $\chi(z)$.}
\label{rhp-gamma}
\end{center}
\end{figure}
The jump $G(z)$ is given in \cite{Fokas:2006-book} (9.5.61) and (9.5.62).  
As illustrated in figure \ref{rhp-gamma}, the contour $\gamma$ is the union of the several contours $\gamma_i$,  $i=1\cdots6$ and $C_m$, $m=0,u,d$. The contours $\gamma_i$ are the anti-stokes lines and the contours $C_m$, $m=0,u,l,$ are small circles oriented clockwise around the origin, $+\rmi /\sqrt{2}$, and $-\rmi /\sqrt{2}$ respectively. On each of these contours the jump $G(z)$ has a different definition and we write $G(z)=G_i(z)$ or $G(z)=G_m(z)$ to denote the corresponding jump on each contour.  As in (9.5.73) in \cite{Fokas:2006-book} one can write
\eq
\label{errorterms}
\eqalign{
\fl \lim_{z\rightarrow\infty}(z\chi_{12}(z)) \\ 
  =-\frac{1}{2\pi \rmi}\int_\gamma\left(G(z)-I\right)_{12}\rmd z-\frac{1}{2\pi \rmi} \int_{\gamma}\left(\left(\chi_-(z)-I\right)(G(z)-I)\right)_{12}\rmd z.}
\endeq
To obtain (\ref{pi-sing-pinf}) the authors of \cite{Fokas:2006-book} proved that equation (\ref{errorterms}) reduces to
\begin{eqnarray}
\label{errorterms2}\lim_{z\rightarrow\infty}(z\chi_{12}(z))=-\frac{1}{2\pi \rmi}\int_{C_0}G_0(z)_{12}\rmd z +O(x^{-3/2}).
\end{eqnarray}
See (9.5.74) in \cite{Fokas:2006-book} and note $\Re\nu=0$.  It is exactly the $O(x^{-3/2})$ terms that we wish to now compute. To calculate these terms there are several things to check.
The following three assertions, 
once proven, will establish the desired result.
\begin{enumerate}
\item In \cite{Fokas:2006-book}, the authors do not compute $\begin{displaystyle}-\frac{1}{2\pi \rmi}\int_{C_0}G_0(z)_{12}\rmd z\end{displaystyle}$ explicitly, but they compute the integral $\begin{displaystyle}-\frac{1}{2\pi \rmi}\int_{C_0}\breve{G}_0(z)_{12}\rmd z\end{displaystyle}$ where $\breve{G}_0$ is an approximation of $G_0$. The error from using this approximation is written as $O(x^{-3/2})$, which could contribute to the $O(x^{-1})$ term in $u(x)$ (note the 
$\sqrt{x}$ in \eref{u-from-chi}).  However, the error in the off-diagonal entries is actually higher order 
and does not contribute to the $O(x^{-1})$ term. 
\item The contribution to $u(x)$ from the integral $\begin{displaystyle}-\frac{1}{2\pi \rmi} \int_{\gamma}\left(\left(\chi_-(z)-I\right)(G(z)-I)\right)_{12}\rmd z\end{displaystyle}$ is\\ $-\frac{\sigma\nu}{4x} + \frac{\sigma \nu}{4x}\cos\left(2|\nu|\log(t) - 4\sqrt{2}t/3 -2\theta \right)$.
\item The contribution to $u(x)$ from $\begin{displaystyle} \int_\gamma G(z)_{12}\rmd z\end{displaystyle}$ is broken down into the sum of the integrals on each component of the contour $\gamma$. In \cite{Fokas:2006-book} it is shown that  $||G_i(z)-I||\leq c\,\mathrm{exp}(-(2x)^{3/2}|z|^2)$  for some constant $c$. Consequently, the integrals $\int_{\gamma_i} (G_i(z)-I)_{12}\rmd z$ will not contribute to the term we wish to compute. The contribution from the integral on $C_0$ is handled in the first assertation. The contribution from $\begin{displaystyle}-\frac{1}{2\pi \rmi}\int_{C_u}G_u(z)_{12}\rmd z\end{displaystyle}$ and $\begin{displaystyle}-\frac{1}{2\pi \rmi}\int_{C_l}G_l(z)_{12}\rmd z\end{displaystyle}$ is $-\frac{\sigma\nu}{2x}$.
\end{enumerate}
To prove assertion 1 we first proceed to verify the following Lemma:
\begin{lemma}For $z\in C_0$, the jump $G_0$ has the expansion
\begin{equation}\label{G_0}
G_0=B_0(z)\rme^{-\frac{\rmi t\sqrt{2}}{3}\sigma_3}\left(\breve{M}_0+\breve{M}_0^{(2)}\right)\rme^{\frac{\rmi t\sqrt{2}}{3}\sigma_3}(B_0(z))^{-1}+O\left(t^{-3/2}\right),\end{equation}
where 
\eq
\breve{M}_0:= \bpm 1 & \frac{\nu}{Q\zeta}\\ \frac{Q}{\zeta} & 1\epm, \quad \breve{M}_0^{(2)}:=\bpm \frac{\nu(\nu+1)}{2\zeta^2} & 0 \\  0 & -\frac{\nu(\nu-1)}{2\zeta^2}\epm.
\endeq
The function $B_0(z)$ is analytic in a neighborhood of the origin and is defined as (see \cite{Fokas:2006-book})
\eq\label{B0}
B_0(z)= \cases{\breve{Y}(z)\left(\zeta^\nu(z)\delta(z)\right)^{\sigma_3} & $\Re(z)>0$ \\ \breve{Y}(z)\rmi\sigma\sigma_1(\rmi\sigma s_2)^{-\sigma_3} \left(\zeta^\nu(z)\delta(z)\right)^{\sigma_3} & $\Re(z)<0$.}
\endeq
The functions $\breve{Y}(z)$ and $\delta(z)$ are given by
\eq
\label{Ytilde}
\eqalign{
\breve{Y}=\frac{1}{2}\bpm \beta+\beta^{-1} & \sigma(\beta-\beta^{-1})\\  \sigma(\beta-\beta^{-1}) &\beta+\beta^{-1}\epm,      \quad      &\beta^2(z)=\left(\frac{z+\frac{\rmi}{\sqrt{2}}}{z-\frac{\rmi}{\sqrt{2}}}\right)^{\frac{1}{2}},    \\
\delta^2(z)=\left(\frac{\left(z^2+\frac{1}{2}\right)^{\frac{1}{2}}-\frac{1}{\sqrt{2}}}{\left(z^2+\frac{1}{2}\right)^{\frac{1}{2}}+\frac{1}{\sqrt{2}}}\right)^{-\nu},     \quad      &\nu=\frac{1}{\rmi\pi}\ln(\rmi\sigma s_2),}
\endeq
and $\zeta(z)$ is as in equation \eref{def:centercircle}.
 \end{lemma}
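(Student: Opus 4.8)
The plan is to recognize that the stated expansion of $G_0$ is nothing but the large-$t$ asymptotic expansion of the local parabolic-cylinder parametrix on the fixed circle $C_0$: since $G_0$ is the ratio of the inner (parabolic-cylinder) parametrix to the outer model solution, and since on $C_0$ the argument $\zeta(z)$ of the parabolic cylinder functions grows like $\sqrt{t}$, the expansion in powers of $\zeta^{-1}$ becomes an expansion in powers of $t^{-1/2}$. First I would write $G_0$ explicitly from (9.5.61)--(9.5.62) of \cite{Fokas:2006-book}, isolating all of its $\zeta$-dependence in the factor $Z^{\rm RH}(\zeta(z))$ and recording how the analytic prefactor $B_0(z)$ and the constant phase $\rme^{\mp\rmi t\sqrt{2}\sigma_3/3}$ enter. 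The conjugating structure $B_0(z)\,(\cdots)\,B_0(z)^{-1}$ in \eref{G_0} is already dictated by this form, so the entire content of the lemma is the evaluation of the bracketed matrix $\breve M_0+\breve M_0^{(2)}$.

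The heart of the computation is the two-term large-argument asymptotics of Whittaker's parabolic cylinder functions (Whittaker and Watson \cite{Whittaker:1927-book}, Section~16.5),
\eq
D_\nu(\zeta)\sim \rme^{-\zeta^2/4}\zeta^\nu\Big(1-\frac{\nu(\nu-1)}{2\zeta^2}+O(\zeta^{-4})\Big),
\endeq
together with the companion expansion for $D_{-\nu-1}(\rmi\zeta)$ and the corresponding expansions of the $\zeta$-derivatives obtained from the recurrence $D_\nu'(\zeta)=\frac{\zeta}{2}D_\nu(\zeta)-D_{\nu+1}(\zeta)$. I would substitute these into the explicit parabolic-cylinder matrix $Z^{\rm RH}(\zeta)$, factor out the dominant diagonal behaviour $\rme^{\pm\zeta^2/4}\zeta^{\pm\nu}$ (which is precisely what $B_0(z)$ and the constant phase are designed to absorb), and read off the remaining series $I+(\text{order }\zeta^{-1})+(\text{order }\zeta^{-2})+O(\zeta^{-3})$. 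The order-$\zeta^{-1}$ part is purely off-diagonal and reproduces the entries $\nu/(Q\zeta)$ and $Q/\zeta$ of $\breve M_0$, with the consistency relation $(\nu/Q)\cdot Q=\nu$ fixing the connection coefficient $Q$; here the $\Gamma$-function value in $Q$ enters through the standard connection formula for $D_\nu$ expressing it in terms of $D_{-\nu-1}(\pm\rmi\zeta)$. The order-$\zeta^{-2}$ part is purely diagonal and yields $\breve M_0^{(2)}$ with the asymmetric entries $\nu(\nu+1)/2$ and $-\nu(\nu-1)/2$.

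Finally I would control the remainder and the two sub-cases of $B_0(z)$. On the fixed circle $C_0$ one has $\zeta(z)\sim\sqrt{t}$ uniformly in $z$, and $B_0(z)$ is $t$-independent and bounded (together with its inverse) on $C_0$, so the neglected $O(\zeta^{-3})$ terms of the parabolic-cylinder expansion become $O(t^{-3/2})$ after conjugation, as claimed. The Stokes data enter only through the $\Re(z)<0$ branch of $B_0(z)$ in \eref{B0}, where the extra factor $\rmi\sigma\sigma_1(\rmi\sigma s_2)^{-\sigma_3}$ must be carried through; one checks that it is exactly the jump of $Z^{\rm RH}$ across the imaginary axis, so that $G_0$ is in fact analytic across $\Re z=0$ in a neighborhood of the origin and the two branches give the same bracketed matrix. \textbf{The main obstacle} I anticipate is the order-$\zeta^{-2}$ bookkeeping: one must verify that the off-diagonal contributions at this order cancel (leaving a purely diagonal $\breve M_0^{(2)}$) and that the surviving diagonal coefficients come out as $\nu(\nu\pm1)/2$ rather than the naive products of the individual second-order coefficients of $D_\nu$ and $D_{-\nu-1}$; this cancellation is forced by the Hamiltonian structure of the parabolic-cylinder equation (equivalently, by the recursion satisfied by the matrices $M_k$ in the formal series $\sum_k M_k\zeta^{-k}$), and keeping the derivative-row asymptotics consistent to this order is the most error-prone part.
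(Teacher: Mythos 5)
Your proposal is correct and follows essentially the same route as the paper: both write $G_0$ as the ratio of the parabolic-cylinder parametrix to the outer model, substitute the two-term large-argument asymptotics of $D_\nu$, $D_{-\nu-1}(\rmi\zeta)$ and their derivatives into the explicit matrix, absorb the dominant diagonal factors into $B_0(z)$ and the phase $\rme^{\mp\rmi t\sqrt{2}\sigma_3/3}$, and convert $O(\zeta^{-3})$ into $O(t^{-3/2})$ using $\zeta\sim\sqrt{t}$ on the fixed circle $C_0$ (the paper simply imports the expanded parametrix from (9.5.43) and (9.5.50) of \cite{Fokas:2006-book} rather than re-deriving it). Two small points: the absence of off-diagonal $\zeta^{-2}$ terms is just the even/odd parity of the parabolic-cylinder asymptotic series, not a delicate cancellation, and $B_0(z)$ is not literally $t$-independent (it carries $\zeta^\nu\sim t^{\nu/2}$) but is uniformly bounded with bounded inverse because $\Re\nu=0$, which is all the error estimate needs.
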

\begin{proof}Consider (9.5.50)  and (9.5.43) of \cite{Fokas:2006-book}.  
Combining these two facts gives:
\begin{equation}
\eqalign{
\fl \Psi_0=B_0(z)\rme^{-\frac{\rmi t\sqrt{2}}{3}\sigma_3}\bpm \frac{1}{2} & 0 \\ 0 & 1\epm Q^{\frac{-\sigma_3}{2}}   \\
\hspace{-.4in} \times \bpm 2+\frac{\nu(\nu+1)}{\zeta^2}+O\left(\zeta^{-4}\right) & \frac{2\nu}{\zeta}+O\left(\zeta^{-3}\right)\\ \zeta^{-1}+\frac{\nu^2+3\nu+2}{2\zeta^{3}}+O\left(\zeta^{-5}\right) & 1-\frac{\nu(\nu-1)}{2\zeta^2}+O\left(\zeta^{-4}\right)\epm Q^{\frac{\sigma_3}{2}}\rme^{\left(\frac{\zeta^2}{4}-\nu\log \zeta\right)\sigma_3}.}
\end{equation}
From (9.5.55) and (9.5.56) in \cite{Fokas:2006-book} we can then obtain

\begin{equation}
\eqalign{
\fl G_0=\Psi_0\breve{\Psi}^{-1}=B_0(z)\rme^{-\frac{\rmi t\sqrt{2}}{3}\sigma_3}\bpm 1+\frac{\nu(\nu+1)}{2\zeta^2}+O\left(\zeta^{-4}\right) & \frac{\nu}{Q\zeta}+O\left(\zeta^{-3}\right)\\ \frac{Q}{\zeta}+Q\frac{\nu^2+3\nu+2}{2\zeta^{3}}+O\left(\zeta^{-5}\right) & 1-\frac{\nu(\nu-1)}{2\zeta^2}+O\left(\zeta^{-4}\right)\epm \\ 
\times \rme^{\frac{\rmi t\sqrt{2}}{3}\sigma_3}(B_0(z))^{-1}.}
\end{equation}
From the definition of $\zeta^3$ in \eref{def:centercircle} one can see that $\zeta^3=ct^{\frac{3}{2}}z^{3}\left(1+O(z)\right)$ for some constant c. Equation (\ref{G_0}) then follows. 
\end{proof}

Consequently, we have that
\begin{equation}\label{integralsum}
\int_{C_0}G_0(z)\rmd z=\int_{C_0}\breve{G}_{0}(z)\rmd z+\int_{C_0}\breve{G}_0^{(2)}(z)\rmd z +O(t^{-\frac{3}{2}}),\end{equation}
where 
\eq
\eqalign{
\breve{G}_0:=B_0(z)\rme^{-\frac{\rmi t\sqrt{2}}{3}\sigma_3}\breve{M}_0\rme^{\frac{\rmi t\sqrt{2}}{3}\sigma_3}(B_0(z))^{-1}, \\
\breve{G}^{(2)}_0:=B_0(z)\rme^{-\frac{\rmi t\sqrt{2}}{3}\sigma_3}\breve{M}^{(2)}_0\rme^{\frac{\rmi t\sqrt{2}}{3}\sigma_3}(B_0(z))^{-1}.}
\endeq
The first integral on the right 
hand side of equation \eref{integralsum} is computed explicitly in 
\cite{Fokas:2006-book} and is what gives rise to the cosine term of the 
expansion \eref{pi-sing-pinf}. The second integral of \eref{integralsum} 
does not contribute to the next order term of \eref{errorterms2}, moreover:
\eq\label{G0(2)}
\int_{C_0}(\breve{G}_0^{(2)})_{12}(z)\rmd z=0.
\endeq
To show this, we first write down the (12) entry of $\breve{G}_0^{(2)}$ using the definition of $\breve{M}_0^{(2)}$. Simple algebra yields that 
\eq\label{G0212}
\eqalign{
(\breve{G}_0^{(2)})_{12}(z)=-\frac{\sigma\nu^2}{2}\frac{\left(\beta^2-\beta^{-2}\right)}{\zeta^2},\,\,\, \Re(z)>0,\\
(\breve{G}_0^{(2)})_{12}(z)=\frac{\sigma\nu^2}{2}\frac{\left(\beta^2-\beta^{-2}\right)}{\zeta^2},\,\,\, \Re(z)<0.}
\endeq
The function $\beta$ has a branch on the imaginary axis. However, since $B_0$ is analytic in a vicinity of the origin we can deform the contour of integration through the branch so that it does not pass through the interior of $C_0$. However, as a function of $z$, $\zeta^2$ is analytic in a vicinity of the origin, but has a zero of multiplicity two at $z=0$. Consequently, when we deform the integral we pick up a residue from the origin. From the definition of $\zeta$ in \eref{def:centercircle} we can write $\zeta^2=-\rmi t4\sqrt{2}z^2(1+O(z^2))$. With this it is clear that
$$\int_{C_0}{G_0}^{(2)}_{12}(z)\rmd=-\frac{\sigma\nu^2}{-\rmi t2\sqrt{2}}\int_{C_0}\frac{\left(\beta^2-\beta^{-2}\right)}{z^2}\rmd z=-\frac{\pi \sigma\nu^2}{t\sqrt{2}}\frac{\rmd}{\rmd z}\left.\left(\beta^2-\beta^{-2}\right)\right|_{z=0}=0.
$$
The last inequality is due to the fact that $\beta(0)+\beta^{-3}(0)=0$. This proves the first assertion.

Next we check assertion 2 by showing
\eq
\eqalign{
\fl -\frac{1}{2\pi \rmi} \int_{\gamma}\left(\left(\chi_-(z)-I\right)(G(z)-I)\right)_{12}\rmd z \\ 
= -\frac{\sigma\nu}{8t} + \frac{\sigma\nu}{8t}\cos\left( 2|\nu|\log(t) - 4\sqrt{2}t/3 -2\theta \right)+O\left(t^{-3/2}\right).}
\endeq
Let $C$ be the Cauchy operator on $L^2(\gamma)$ defined for 
$f\in L^2(\gamma)$ by 
\eq
(Cf)(z) := \frac{1}{2\pi \rmi}\int_\gamma\frac{f(s)}{s-z}\rmd s \quad \mbox{for } z\notin\gamma.
\endeq
Let $C_-$ denote the boundary limit of $C$ defined for $f\in L^2(\gamma)$ by
\eq
(C_-f)(z) := \lim_{z'\to z}(Cf)(z'), 
\endeq
where $z'$ is on the right-hand side of $\gamma$ and $z\in\gamma$.
Define, for $f\in L^2(\gamma)$,
\eq
(C_{G-I}f)(z) := C_-[(G-I)f](z) \quad \mbox{for } z\in\gamma.
\endeq
Note that $||C_{G-I}||_{L^2(\gamma)\to L^2(\gamma)} \leq ||C_-||_{L^2(\gamma)\to L^2(\gamma)}||G-I||_{L^\infty(\gamma)}$.  Using the fact ((9.5.69) in \cite{Fokas:2006-book})
that the jump matrix 
$G$ satisfies
\eq
||G-I||_{L^2(\gamma)\cap L^\infty(\gamma)}\leq \frac{c}{\sqrt{t}}
\endeq
for some constant $c$ we have 
\eq
\label{bound-on-CGmI}
||C_{G-I}||_{L^2(\gamma)\to L^2(\gamma)}\leq \frac{c}{\sqrt{t}}
\endeq
as $C_-$ is a bounded operator on $L^2(\gamma)$.

Suppose $\mu(s)$ satisfies $\mu-I\in L^2(\gamma)$ and $(1-C_{G-I})(\mu-I) = C_{G-I}I$.  Note that from \eref{bound-on-CGmI}, $1-C_{G-I}$ is invertible for 
all $x$ sufficiently large.  So, for $x$ sufficiently large,
\eq
\eqalign{
||\mu-I||_{L^2(\gamma)} & \leq ||(1-C_{G-I})^{-1}C_{G-I}I||_{L^2(\gamma)} \leq 2||C_{G-I}I||_{L^2(\gamma)} \\
   & \leq 2||C_-||_{L^2(\gamma)\to L^2(\gamma)}||G-I||_{L^\infty(\gamma)} \leq \frac{c}{\sqrt{t}}.}
\endeq
From standard Riemann-Hilbert theory (see, for instance, 
\cite{Deift:1998-book}), for $z\notin\gamma$,
\eq
\chi(z) = I + \frac{1}{2\pi \rmi}\int_\gamma\frac{\mu(s)(G(s)-I)}{s-z}\rmd s = I + \frac{1}{2\pi \rmi}\int_\gamma\frac{G(s)-I}{s-z}\rmd s + E(z),
\endeq
where
\eq
E(z):=\frac{1}{2\pi \rmi}\int_\gamma\frac{(\mu(s)-I)(G(s)-I)}{s-z}\rmd s = C[(\mu-I)(G-I)](z).
\endeq
Defining $E_-(z) := C_-[(\mu-I)(G-I)](z)$, we have
\eq
\eqalign{
\fl \left|-\frac{1}{2\pi \rmi}\int_\gamma E_-(z)(G(z)-I)\rmd z\right| \leq c||E_-||_{L^2(\gamma)}||G-I||_{L^2(\gamma)} \\
   \leq ||C_-||_{L^2(\gamma)\to L^2(\gamma)}||\mu-I||_{L^2(\gamma)}||G-I||_{L^\infty(\gamma)}||G-I||_{L^2(\gamma)} \\ 
   \leq \frac{c}{t^{3/2}}.}
\endeq
Therefore
\eq
\label{bound-on-CXmIGmI}
\eqalign{
\fl -\frac{1}{2\pi \rmi} \int_{\gamma}\left(\chi_-(z)-I\right)(G(z)-I)\rmd z \\ 
= -\frac{1}{(2\pi \rmi )^2}\int_\gamma\int_\gamma\frac{(G(s)-I)(G(z)-I)}{s-z_-}\rmd s\rmd z+O\left(t^{-3/2}\right).}
\endeq
As (see (9.5.64) and (9.5.65) in \cite{Fokas:2006-book})
\eq
||G-I||_{L^2(\gamma\backslash C_0) \cap L^\infty(\gamma\backslash C_0)} \leq \frac{c}{t}
\endeq
for some constant $c$, in \eref{bound-on-CXmIGmI} we can restrict the 
integrals to $C_0$ without adding a larger error term:
\eq
\eqalign{
\fl -\frac{1}{2\pi \rmi} \int_{\gamma}\left(\chi_-(z)-I\right)(G(z)-I)\rmd z \\ 
= -\frac{1}{(2\pi \rmi )^2}\int_{C_0}\int_{C_0}\frac{(G_0(s)-I)(G_0(z)-I)}{s-z_-}\rmd s\rmd z+O\left(t^{-3/2}\right),}
\endeq
where $G_0$ denotes $G$ on $C_0$.  Furthermore, as (see (9.5.66) in 
\cite{Fokas:2006-book})
\eq
||G_0 - \breve{G}_0||_{L^2(C_0)\cap L^\infty(C_0)} \leq \frac{c}{t},
\endeq
where $\breve{G}_0$ is defined in (9.5.58) of \cite{Fokas:2006-book}, we 
can also replace $G_0$ by $\breve{G}_0$:
\eq
\label{replace-G0}
\eqalign{
\fl -\frac{1}{2\pi \rmi} \int_{\gamma}\left(\chi_-(z)-I\right)(G(z)-I)\rmd z \\ 
= -\frac{1}{(2\pi \rmi )^2}\int_{C_0}\int_{C_0}\frac{(\breve{G}_0(s)-I)(\breve{G}_0(z)-I)}{s-z_-}\rmd s\rmd z+O\left(t^{-3/2}\right).}
\endeq
Now we evaluate this double integral explicitly.  The function 
$\breve{G}_0(z)$ is defined ((9.5.58) of \cite{Fokas:2006-book}) by
\eq
\breve{G}_0(z) = B_0(z) \bpm 1 & \frac{\nu}{Q\zeta(z)}\rme^{-\rmi 2\sqrt{2}t/3} \\ \frac{Q}{\zeta(z)}\rme^{\rmi 2\sqrt{2}t/3} & 1 \epm B_0^{-1}(z),
\endeq
where $Q$ and $\nu$ are constants given in \eref{def:centercircle} above and 
$\zeta(z)$ and $B_0(z)$ (see \eref{B0}) are holomorphic in $\overline{C_0}$.  
The error term $O(z)$ in (9.5.47) of \cite{Fokas:2006-book} is 
actually $O(z^2)$, and the function $\zeta(z)$ satisfies 
\eq
\zeta(z) = \rme^{-\rmi \pi/4}\sqrt{t}2^{5/4}z(1+O(z^2)), \quad z\sim 0.
\endeq
The function $B_0(z)$ satisfies ((9.5.53) in \cite{Fokas:2006-book})
\eq
B_0(0) = \frac{1}{\sqrt{2}}\bpm 1 & \rmi\sigma \\ \rmi\sigma & 1 \epm \rme^{-\rmi \pi\nu\sigma_3/4}t^{\nu\sigma_3/2}2^{7\nu\sigma_3/4}
\endeq
where $\sigma = -\mbox{sgn}(\Im s_1)$.  Note that $\breve{G}_0(s)$ has a pole 
at $s=0$, with the residue (see (9.5.76) in \cite{Fokas:2006-book})
\eq
\label{A-ito-pq}
\eqalign{
\fl A:&=\mathop{\mbox{Res}}_{z=0}\breve{G}(z)\\
\fl &=B_0(0)\bpm 0 & \frac{\nu}{Q\zeta'(0)}\rme^{-2\rmi\sqrt{2}t/3} \\ \frac{Q}{\zeta'(0)}\rme^{2\rmi\sqrt{2}t/3} & 0 \epm B_0(0)^{-1}\\
\fl &=\frac{1}{2t^{1/2}}\bpm \rmi\sigma(qt^{-\nu}\rme^{2\sqrt{2}\rmi t/3}-pt^\nu \rme^{-2\sqrt{2}\rmi t/3}) & qt^{-\nu}\rme^{2\sqrt{2}\rmi t/3}+pt^\nu \rme^{-2\sqrt{2}\rmi t/3} \\ qt^{-\nu}\rme^{2\sqrt{2}\rmi t/3}+pt^\nu \rme^{-2\sqrt{2}\rmi t/3} & -\rmi\sigma(qt^{-\nu}\rme^{2\sqrt{2}\rmi t/3}-pt^\nu \rme^{-2\sqrt{2}\rmi t/3}) \epm,}
\endeq
where 
\eq
p = \rmi\sigma\frac{2^{-3/4}\sqrt{\pi}\rme^{\rmi \pi\nu/2}}{(1+s_1s_2)\Gamma(\nu)}\rme^{3\rmi\pi/4}2^{7\nu/2}, \quad q=\overline{p}.
\endeq
There are a few typographical errors in (9.5.76) and (9.5.78) of 
\cite{Fokas:2006-book}.  In (9.5.76), the diagonal entries of the middle 
matrix which is conjugated by $B_0(0)$ should both be $0$.  The diagonal 
entries of the middle matrix in the last equality of (9.5.76) should also 
both be $0$.  In (9.5.78), $\Gamma(\nu)$ should be replaced by $\Gamma(-\nu)$.  

From the residues at $s=z$ and $s=0$,
\eq
\label{from-z-0-res}
\eqalign{
\fl -\frac{1}{2\pi \rmi}\int_{C_0}\int_{C_0}\frac{(\breve{G}_0(s)-I)(\breve{G}_0(z)-I)}{s-z_-}\rmd s\rmd z \\ 
= -\frac{1}{2\pi \rmi}\int_{C_0}(\breve{G}_0(z)-I)^2\rmd z + \frac{1}{2\pi \rmi}\int_{C_0}\frac{A(\breve{G}_0(z)-I)}{z}\rmd z+O(t^{-\frac{3}{2}}).}
\endeq

Noting that the (12) entry of 
\eq
(\breve{G}_0(z)-I)^2 = \frac{\nu}{\zeta^2(z)}I
\endeq
is zero, we see the second integral on the right-hand side of 
\eref{from-z-0-res} does not contribute to the evaluation of $u(x)$.  Next, 
another residue calculation shows
\eq
\eqalign{
\fl \frac{1}{2\pi \rmi}\int_{C_0}\frac{A(\breve{G}_0(z)-I)}{z}\rmd z \\ 
= \frac{1}{2\pi \rmi}\int_{C_0}\frac{1}{z^2}AB_0(z)\bpm 0 & \frac{\nu z}{Q\zeta}\rme^{-2\rmi\sqrt{2}t/3} \\ \frac{Q z}{\zeta}\rme^{2\rmi\sqrt{2}t/3} & 0 \epm B_0(z)^{-1}\rmd z\\
 = A[B_0'(0)B_0(0)^{-1},A],}
\endeq
where $[M,N]:=MN-NM$.  From \eref{B0}, a direct calculation shows that
\eq
\label{B0pB0inv}
\eqalign{
B_0'(0)B_0(0)^{-1} & = \breve{Y}'(0)\breve{Y}(0)^{-1} + \frac{(\zeta^\nu\delta)'(0)}{(\zeta^\nu\delta)(0)}\breve{Y}(0)\sigma_3\breve{Y}(0)^{-1}\\
   & = -\frac{\rmi}{\sqrt{2}}\sigma\bpm 0 & 1 \\ 1 & 0 \epm.}
\endeq
From \eref{replace-G0}, \eref{A-ito-pq}, \eref{from-z-0-res}, and \eref{B0pB0inv},
\eq
\label{assertion2}
\eqalign{
\fl \left(-\frac{1}{2\pi \rmi} \int_{\gamma}\left(\chi_-(z)-I\right)(G(z)-I)\rmd z\right)_{12} \\ 
   = \left(A\left[-\frac{\rmi\sigma}{\sqrt{2}}\bpm 0 & 1 \\ 1 & 0 \epm, A \right] \right)_{12} +O\left(t^{-\frac{3}{2}}\right)\\
   = -\frac{\rmi\sigma}{2^{3/2}t}\left(-\rmi \nu 2^{-3/2} + q^2\rme^{4\sqrt{2}\rmi t/3-2\nu\log(t)} + p^2\rme^{-4\sqrt{2}\rmi t/3+2\nu\log(t)}\right) \\ 
 \hspace{.3in} + O\left(t^{-\frac{3}{2}}\right)\\
   = -\frac{\sigma\nu}{8t} + \frac{\sigma\nu}{8t}\cos\left( 2|\nu|\log(t) - 4\sqrt{2}t/3 -2\theta \right)+O\left(t^{-\frac{3}{2}}\right),}
\endeq
where $\theta$ is defined in \eref{rho-sigma-theta}.  The last equality in 
\eref{assertion2} uses
\eq
\Gamma(1+\rmi y)=\frac{\pi y}{\sinh y} \mbox{ for } y>0 \quad \mbox{and} \quad s_2=\frac{s_1-\overline{s_1}}{1+|s_1|^2}.
\endeq

We proceed to check assertion 3.  The jump matrices $G_u$ and $G_l$ are written as $\Psi^u\breve{\Psi}^{-1}$ and $\Psi^l\breve{\Psi}^{-1}$.  $G_u$ and $G_l$ satisfy the symmetry
\begin{equation}\label{symmetry}\int_{C_l}G_l(z)\rmd z=-\sigma_2\left(\int_{C_u}G_u(z)\rmd z\right)\sigma_2.\end{equation}
Using this symmetry we note that
\begin{equation}\label{symmetry2}
\int_{C_u}G_u(z)_{12}\rmd z+\int_{C_l}G_l(z)_{12}\rmd z=\int_{C_u}G_u(z)_{12}+G_u(z)_{21}\rmd z.
\end{equation}
This allows us to work only with $G_u$. $\breve{\Psi}$ and $\Psi^{u}$ are explicit functions (see (9.5.24)  and (9.5.34) of \cite{Fokas:2006-book}) the later being constructed using Airy functions (see (9.5.30) in \cite{Fokas:2006-book}). Using the leading and second order asymptotics of the Airy function and its derivative (see Abramowitz and Stegun \cite{Abramowitz:1965-book}) one can write:
\begin{equation}
\eqalign{
\fl G_u=\left[I+\frac{3}{4\zeta^{\frac{3}{2}}}\breve{Y}\delta^{\sigma_3}(\rmi s_2)^{\sigma_3/2}\bpm c_1+d_1 & -c_1+d_1\\ c_1-d_1 & -(c_1+d_1)\epm(\rmi s_2)^{-\sigma_3/2}\delta^{-\sigma_3}\breve{Y}^{-1}\right] \\ 
\times \left(I+O(|\zeta|^{-3})\right),}
\end{equation}
where $\breve{Y}(z)$ and $\delta(z)$ are defined in \eref{Ytilde} and
\eq
 \zeta=2^{\frac{2}{3}}t^{\frac{2}{3}}\rme^{-\frac{\pi \rmi }{3}}\left(z^2+\frac{1}{2}\right), \quad     c_1=\frac{5}{72},     \quad     d_1=-\frac{7}{72}.
     \endeq

The leading-order contribution from the integral $\int_{C_u}(G_u(z)-I)\rmd z$ will come from integrating
\begin{equation}
\frac{3}{4\zeta^{\frac{3}{2}}}\breve{Y}\delta^{\sigma_3}(\rmi s_2)^{\sigma_3/2}\bpm c_1+d_1 & -c_1+d_1\\ c_1-d_1 & -(c_1+d_1)\epm (\rmi s_2)^{-\sigma_3/2}\delta^{-\sigma_3}\breve{Y}^{-1}.
\end{equation}
In particular we are interested in the sum of the $(12)$ and the $(21)$ entries of this matrix, which can be written out as
\begin{equation}\label{12+21entry}
(12)+(21)=-\frac{\rmi}{16x^{\frac{3}{2}}(z^2+1/2)^{\frac{3}{2}}}\left(\delta^2(\rmi s_2)-\delta^{-2}(\rmi s_2)^{-1}\right).
\end{equation}

We expand $\delta(z)$ for $z$ near $\rmi /\sqrt{2}$, giving
\eq
\eqalign{
\fl \delta^2(z)
= \frac{1}{ \rmi\sigma s_2}\bigg(1+2^{\frac{7}{4}}\rme^{\pi \rmi/4}\nu(z-\rmi/\sqrt{2})^{\frac{1}{2}}+8\nu^2\rme^{\pi \rmi/2}(z-\rmi/\sqrt{2}) \\ 
+O\left((z-\rmi/\sqrt{2})^{\frac{3}{2}}\right)\bigg),}
\endeq
Consequently, 
\begin{eqnarray}
\label{eqn:deltadif}
(\rmi s_2)\delta^2-(\rmi s_2)^{-1}\delta^{-2}=\sigma\left(2^{\frac{11}{4}}\rme^{\frac{\pi \rmi}{4}}\nu(z-\rmi/\sqrt{2})^{\frac{1}{2}}+O\left((z-\rmi\sqrt{2})^{\frac{3}{2}}\right)\right).
\end{eqnarray}
Additionally,
\eq
\label{eqn:3/2power}
\left(1+\frac{\left(z-\frac{\rmi}{\sqrt{2}}\right)}{\rmi\sqrt{2}}\right)^{-3/2}=1-\frac{3}{\rmi2\sqrt{2}}\left(z-\frac{\rmi}{\sqrt{2}}\right)+O\left(\left(z-\frac{\rmi}{\sqrt{2}}\right)^{2}\right).
\endeq
Inserting the expansions (\ref{eqn:deltadif}) and (\ref{eqn:3/2power}) we obtain
\eq
\eqalign{
\label{deltadif}
\fl \int_{C_u}\frac{(\rmi s_2)\delta(z)^2-(\rmi s_2)^{-1}\delta^{-2}(z)}{\left(z^2+\frac{1}{2}\right)^{3/2}}\rmd z \\
=\int_{C_u}\frac{\sigma\left(2^{\frac{11}{4}}\rme^{\frac{\pi \rmi}{4}}\nu(z-\frac{\rmi}{\sqrt{2}})^{\frac{1}{2}}+O\left((z-\frac{\rmi}{\sqrt{2}})^{\frac{3}{2}}\right)\right)}{(z-\frac{\rmi}{\sqrt{2}})^{\frac{3}{2}}(\rmi\sqrt{2})^{\frac{3}{2}}} \\ 
\hspace{.3in} \times \left(1-\frac{3}{\rmi 2\sqrt{2}}\left(z-\frac{\rmi}{\sqrt{2}}\right)+O\left(\left(z-\frac{\rmi}{\sqrt{2}}\right)^{2}\right)\right)\rmd z\\
=\frac{\sigma2^{\frac{11}{4}}\rme^{\frac{\pi \rmi}{4}}\nu}{(\rmi\sqrt{2})^{\frac{3}{2}}} \int_{C_u}\frac{1}{z-\frac{\rmi}{\sqrt{2}}}\rmd z\\
=-8\sigma\nu\pi.}
\endeq
Using the definition of $G_u$,  (\ref{deltadif}), and (\ref{12+21entry}) we have that:
\begin{equation}
-\frac{1}{2\pi \rmi}\int_{C_u}G_u(z)_{12}\rmd z-\frac{1}{2\pi \rmi}\int_{C_l}G_l(z)_{12}\rmd z=-\frac{\sigma\nu}{4x^{\frac{3}{2}}}+O(x^{-3}).
\end{equation}
Together, assertions 1, 2, and 3 and \eref{pi-sing-pinf} establish Theorem \ref{sing-imag-asymp}.

\section{The GOE and GSE sine-kernel constants}
\label{sine-kernel}
Define $J$ to be the interval $(-1,1)$.  Let ${\bf K^{(x)}}$ be the integral operator 
on $L^2(J,\rmd z)$ with kernel
\eq
K^{(x)}(z,z';x):=\frac{\sin x(z-z')}{\pi(z-z')}.
\endeq
Also let ${\bf K_\pm^{(x)}}$ be the integral operators on $L^2((0,1),\rmd z)$ with 
kernels
\eq
K_\pm^{(x)}:=\frac{1}{\pi}\left(\frac{\sin x(z-z')}{z-z'}\pm\frac{\sin x(z+z')}{z+z'}\right).
\endeq
Then
\eq
P^{(x)}:=\det(1-{\bf K^{(x)}})
\endeq
is the limit (as $N\to\infty$) of the probability 
that an $N\times N$ matrix drawn from the Gaussian Unitary Ensemble has no 
eigenvalues in $\left(-\frac{x}{\pi},\frac{x}{\pi}\right)$ after proper scaling so that 
the mean spacing of eigenvalues in the bulk is normalized to $1$.  Also define 
the determinants 
\eq
D_\pm(x):=\det(1-{\bf K_\pm^{(x)}}).
\endeq
Then $D_+(x)$ and $\frac{1}{2}(D_+(2x)+D_-(2x))$ are, respectively, the 
limits of the probabilities that a matrix drawn from the Gaussian Orthogonal 
or Gaussian Simplectic Ensembles has no eigenvalues in $(0,\frac{x}{\pi})$ 
after scaling so the bulk 
spacing of eigenvalues is normalized to $1$.  Dyson \cite{Dyson:1976} 
conjectured and Ehrhardt \cite{Ehrhardt:2007} recently proved that
\begin{thm} \hspace{.2in}
\label{sine-kernel-thm}
\eq
\log D_\pm = -\frac{x^2}{4} \mp \frac{x}{2} - \frac{\log x}{8} + \frac{\log 2}{24} \pm \frac{\log 2}{4} + \frac{3}{2}\zeta'(-1) + o(1) \mbox{ as } x\to+\infty,
\endeq
where $\zeta$ is the Riemann zeta function.  
\end{thm}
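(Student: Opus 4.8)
The plan is to split the two constants in $\log D_\pm(x)$ into a part coming from the already-known Gaussian Unitary (sine-kernel) asymptotics and a part requiring a single total-integral computation of exactly the type carried out for the Hastings--McLeod solution in Theorem~\ref{hm-theorem}. First I would use the even/odd decomposition of the sine kernel: since $K_+^{(x)}$ and $K_-^{(x)}$ are precisely the even and odd parts of $\mathbf K^{(x)}$ under $z\mapsto -z$, the operator $\mathbf K^{(x)}$ on $L^2(-1,1)$ is unitarily equivalent to $K_+^{(x)}\oplus K_-^{(x)}$ on $L^2(0,1)$, whence $D_+(x)D_-(x)=P^{(x)}$. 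The now-established Dyson conjecture \cite{Dyson:1976} for the unitary gap probability gives
\eq
\log P^{(x)} = -\frac{x^2}{2}-\frac14\log x + \frac{1}{12}\log 2 + 3\zeta'(-1)+o(1), \quad x\to+\infty,
\endeq
so that $\frac12\log P^{(x)}$ accounts for every term in the statement except the linear term $\mp x/2$ and the constant $\pm\frac{1}{4}\log 2$. Writing $\log D_\pm=\frac12\log P^{(x)}\pm\frac12\log\!\big(D_+/D_-\big)$, the theorem is therefore equivalent to the single asymptotic statement
\eq
\label{ratio-goal}
\log\frac{D_+(x)}{D_-(x)} = -x + \frac{1}{2}\log 2 + o(1), \quad x\to+\infty.
\endeq

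Second, I would realize the left side of \eref{ratio-goal} as the total integral of a Painlev\'e~V transcendent. Following Tracy--Widom \cite{Tracy:1996}, the ratio $D_+/D_-$ is the exponential of the integral of a function $R(y)$ built from the resolvent of the sine kernel at the endpoint of the shrinking interval, and $R$ is expressible through a distinguished solution of the Painlev\'e~V equation associated with the sine kernel. Since $D_\pm(0)=1$, one has $\log\big(D_+/D_-\big)(x)=\int_0^x R(y)\,\rmd y$, and from the Painlev\'e~V connection data $R(y)\to -1$ as $y\to+\infty$. Thus \eref{ratio-goal} is equivalent to the regularized total integral
\eq
\label{total-int-goal}
\int_0^{\infty}\big(R(y)+1\big)\,\rmd y = \frac{1}{2}\log 2,
\endeq
which is precisely the Painlev\'e~V analog of the integrals evaluated in Section~\ref{sec:real}.

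Third, and this is the heart of the matter, I would evaluate \eref{total-int-goal} by the Lax-pair/Riemann--Hilbert interplay used throughout this paper rather than by direct analysis of $R$. The Painlev\'e~V Lax pair for the sine kernel has an $x$-equation whose value at the distinguished spectral point is governed by a first-order linear system $\frac{\rmd}{\rmd x}P(x)=A(x)P(x)$ whose fundamental solution carries $\exp\!\big(\int R\big)$ in a fixed matrix entry, exactly as $\exp(U(a,x))$ appears in \eref{P}. I would then compute $P(x)$ at the two endpoints: as $x\to 0^+$ the interval collapses and $P$ is explicit, while as $x\to+\infty$ I would apply the Deift--Zhou steepest-descent method to the sine-kernel Riemann--Hilbert problem, subtracting the non-integrable contribution that produces the $\to -1$ behaviour of $R$ and reading off the finite remainder. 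Matching the two evaluations yields \eref{total-int-goal}; the constant $\frac12\log 2$ should emerge from a Gamma-function (equivalently parabolic-cylinder) evaluation of the local parametrix exactly as the $\log 2$ did in the proof of Theorem~\ref{hm-theorem} and the $\arg\Gamma$ terms did in Theorem~\ref{pi-sing-thm}.

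The main obstacle is the final, constant-level matching in the $x\to+\infty$ analysis: the leading $-x$ behaviour of $\int_0^x R$ follows from the genus-zero model solution, but isolating the exact additive constant $\frac12\log 2$ requires tracking the subleading contributions of the endpoint parametrices and invoking the correct Gamma-function identities, with uniform control of the $o(1)$ error. As in Theorems~\ref{hm-theorem} and~\ref{pi-sing-thm}, a $2\pi\rmi\mathbb{Z}$ ambiguity will appear when taking logarithms; here it is fixed by the reality and positivity of $D_\pm$ together with the normalization $\log(D_+/D_-)(0)=0$.
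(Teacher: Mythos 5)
Your proposal is correct and follows essentially the same route as the paper: the identity $P^{(x)}=D_+D_-$ together with Dyson's asymptotics reduces the theorem to the total integral $\int_0^\infty(1-\xi(y))\,\rmd y=\frac{1}{2}\log 2$ of the Painlev\'e~V transcendent (your $R$ is $-\xi$), which the paper evaluates exactly as you describe --- via the $x$-equation of the Lax pair at the point $z=0$, the explicit solution at $x=0$, and Deift--Zhou steepest descent as $x\to+\infty$ (Lemmas~\ref{log-Dpm-lemma} and~\ref{U-sine-kernel-lemma}). The only cosmetic differences are that the paper reaches the decomposition $\log D_\pm=\frac{1}{2}\log P^{(x)}\mp\frac{1}{2}\int_0^x\xi$ by twice integrating the second-derivative identities $\xi^2\pm\xi'=-2\frac{\rmd^2}{\rmd x^2}\log D_\pm$ rather than a first-derivative identity, and the constant $\frac{1}{2}\log 2$ in fact emerges from the outer model solution evaluated at $z=0$ (just as in Theorem~\ref{hm-theorem}) rather than from a Gamma-function evaluation of an endpoint parametrix.
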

\noindent
The $o(1)$ terms are given by an explicit, asymptotic series.  We give a 
short alternative proof of this theorem that will follow from Lemmas 
\ref{log-Dpm-lemma} and \ref{U-sine-kernel-lemma}.  To begin, we express 
$\log D_\pm(x)$ in terms of the definite integral of a solution to the 
Painlev\'e V equation.  This function arises in the solution of a 
Riemann-Hilbert problem (studied in \cite{Deift:1997}) that is associated 
with the sine kernel.  Set $J:=(-1,1)$ and let $m(z;x)$ satisfy the 
Riemann-Hilbert problem (see (1.11) in \cite{Deift:1997})
\eq
\label{m-rhp}
\cases{ 
m(z;x) \mbox{ is analytic for } z\notin J \\ m_+(z;x) = 
m_-(z;x)\bpm 0 & \rme^{2\rmi xz} \\ -\rme^{-2\rmi xz} & 2 \epm \mbox{ on } \overline{J} \\ 
m(z;x) = I + O\left(\frac{1}{z}\right) \mbox{ as } z\to\infty.}
\endeq
Here $J$ is oriented left to right.  Define $m_1(x)$ by 
\eq\label{mexp}
m(z;x) = I + \frac{m_1(x)}{z} + O\left(\frac{1}{z^2}\right) \mbox{ as } z\to\infty.
\endeq
Then set (see (4.31) in \cite{Deift:1997}) 
\eq\label{xidef}
\xi(x):=2\rmi(m_1(x))_{21} = -2\rmi(m_1(x))_{12}.
\endeq
It is shown in \cite{Deift:1997} that $\xi(x)$ is related to a solution of 
the Painlev\'e V equation. Indeed, let $u(x)$ be the regular at $x =0$  solution of the Painlev\'e V
equation
\eq\label{pV1}
\frac{\rmd^2u}{\rmd x^2}=\left(\frac{\rmd u}{\rmd x}\right)^2\frac{3u - 1}{2u(u-1)} + 
\frac{2u(u+1)}{u-1} +\frac{2\rmi u}{x} -\frac{1}{x}\frac{\rmd u}{\rmd x},
\endeq
characterized by the following behavior at $x =0$:
\eq\label{ic1}
u(x) = 1 +2\rmi x -\frac{2\pi +2\rmi}{\pi} x^2+ O(x^3),
\quad x \to 0,
\endeq
and put
\eq\label{vdef}
v(x) = \sqrt{u(2x) } = 1 +2\rmi x  - \frac{2\pi + 4\rmi}{\pi}x^2 + O(x^3).
\endeq
Then 
\eq\label{pVxi1}
\xi(x) = \frac{2\rmi v(x) - v'(x)}{v^2(x) - 1}.
\endeq

Alternatively, one can use the Hirota-Jimbo-Miwa-Okomoto $\sigma$-form  of Painlev\'e V
(see \cite{JM}),
\eq\label{pV2}
\left(x\frac{\rmd^2\sigma}{\rmd x^2}\right) = -16\left(\sigma-x\frac{\rmd\sigma}{\rmd x} -\frac{1}{4}\left(\frac{\rmd\sigma}{\rmd x}\right)^2\right)
\left(\sigma - x\frac{\rmd\sigma}{\rmd x}\right),
\endeq
and choose the (regular for all positive $x$) solution $\sigma(x)$ satisfying
the initial conditions
\eq\label{ic2}
\sigma(x) = -\frac{2}{\pi}x -\frac{4}{\pi^2}x^2 + O(x^3), \quad x \to 0.
\endeq
The relation of $\sigma(x)$ to the function $\xi(x)$ is given by the formula (see \cite{Deift:1997}{\footnote{In \cite{Deift:1997}, the symbol $\theta(x)$ is used instead of $\sigma(x)$, the symbol
$y(x)$ is used instead of $v(x)$, and the symbol $\omega(x)$ instead of $u(x)$.}}),
\eq\label{pVxi2}
\frac{\rmd}{\rmd x}\left(\frac{\sigma(x)}{x}\right) =-\xi^2(x).
\endeq
It also worth noticing that the function $\sigma(x)$, similar
to the function $\xi(z)$, can be determined via the solution $m(z;x)$
of the Riemann-Hilbert problem (\ref{m-rhp}) via the equation
(see (4.55) of \cite{Deift:1997})
\eq\label{sigmarh}
\sigma(x) = -2\rmi x(m_{1}(x))_{11}.
\endeq
The central role of the function $\xi(x)$ in the analysis of the determinants $D_{\pm}(x)$ is
based on the following important fact.

Define (see \cite{Deift:1997} (4.38))
\eq
Q^\pm(x):=\xi^2(x)\pm\xi'(x).
\endeq
Then,
\eq
\label{Q-d2dx2-Dpm}
Q^\pm(x) = -2\frac{\rmd^2}{\rmd x^2}\big(\log D_\pm(x)\big).
\endeq
This equation is proved in  \cite{Deift:1997} (see equation (4.125) of that work) using
Dyson's results \cite{Dyson:1976} concerning the spectral analysis of the 
1-D  Schr\"odinger operators with the potentials
determined by the second logarithmic derivative of the determinants $D_{\pm}(x)$.
In the Appendix, we give an alternative derivation of  (\ref{Q-d2dx2-Dpm}) based
solely on the Riemann-Hilbert problem (\ref{m-rhp}).

Equations (\ref{Q-d2dx2-Dpm}) are companion equations to the equation
\eq\label{Pxi}
\xi^2(x) = -\frac{\rmd^2}{\rmd x^2}\log P^{(x)},
\endeq
which in turn follows from the relation
\eq\label{sinPV}
\sigma(x) = x\frac{\rmd}{\rmd x}\log P^{(x)}.
\endeq
This is one of the key formulas concerning the sine-kenel determinant $P^{(x)}$.
It was first discovered by Jimbo, Miwa, Mori, and Sato in \cite{JMMS}.
In  \cite{Deift:1997} it was re-derived using the Riemann-Hilbert problem (\ref{m-rhp})
(see also Appendix).  One more derivation of (\ref{sinPV}) was obtained earlier 
by Tracy and Widom in \cite{TW3}. 
 
We shall also need the important formula
\eq\label{DpmPx}
P^{(x)} = D_{+}D_{-},
\end{equation}
whose  proof via the general operator technique is given in \cite{Me}. An 
alternative proof of (\ref{DpmPx}) via Riemann-Hilbert techniques is 
presented in \cite{Deift:1997}, page 206.
 
We now proceed to establish the above mentioned evaluation of 
$\log D_{\pm}(x)$
and the asymptotics of these functions  in terms of the integrals of the Painlev\'e V transcendents, i.e.
in terms of the  function $\xi(z)$.

\begin{lemma} \hspace{.1in}
\label{log-Dpm-lemma}
\eq
\label{log-Dpm}
\log D_\pm(x) = -\frac{x^2}{4} - \frac{\log x}{8} + \frac{\log 2}{24} + \frac{3}{2}\zeta'(-1) \mp \frac{1}{2}\int_0^x\xi(y)\rmd y + o(1).
\endeq
\end{lemma}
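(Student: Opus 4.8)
The plan is to decompose $\log D_\pm(x)$ into a ``symmetric'' and an ``antisymmetric'' part with respect to the sign $\pm$. The symmetric part is immediate from the factorization \eref{DpmPx}, which gives
\eq
\log D_+(x)+\log D_-(x)=\log P^{(x)},
\endeq
so that $\frac{1}{2}\log P^{(x)}$ will account for every term on the right-hand side of \eref{log-Dpm} except the integral of $\xi$. For the antisymmetric part I would subtract the two cases of \eref{Q-d2dx2-Dpm}: since $Q^\pm=\xi^2\pm\xi'$ and $\frac{\rmd^2}{\rmd x^2}\log D_\pm=-\frac{1}{2}Q^\pm$,
\eq
\frac{\rmd^2}{\rmd x^2}\big(\log D_+(x)-\log D_-(x)\big)=-\frac{1}{2}(Q^+-Q^-)=-\xi'(x).
\endeq
(Adding the two cases instead reproduces \eref{Pxi}, a useful consistency check.)

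Next I would integrate this identity twice. A first integration gives $(\log D_+-\log D_-)'(x)=-\xi(x)+C_1$, and a second gives $\log D_+(x)-\log D_-(x)=-\int_0^x\xi(y)\,\rmd y+C_1x+C_2$. The crux is to show $C_1=C_2=0$, which I would extract from the behaviour as $x\to 0$. At $x=0$ the kernels of ${\bf K}_\pm^{(x)}$ vanish, so $D_\pm(0)=1$ and $\log D_+(0)-\log D_-(0)=0$, forcing $C_2=0$. For $C_1$ I would match first derivatives at the origin: on one hand $\xi(0)=2/\pi$, read off from the Painlev\'e~V initial data \eref{ic1}--\eref{pVxi1} (equivalently from \eref{ic2} and \eref{pVxi2}, which give $\xi^2(0)=4/\pi^2$); on the other hand a short trace computation using $K_\pm^{(x)}(z,z)=\frac{1}{\pi}\big(x\pm\frac{\sin 2xz}{2z}\big)$ yields $(\log D_\pm)'(0)=-\frac{1}{\pi}(1\pm 1)$, hence $(\log D_+-\log D_-)'(0)=-2/\pi$. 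The relation $-\xi(0)+C_1=-2/\pi$ then gives $C_1=0$, so that $\log D_+(x)-\log D_-(x)=-\int_0^x\xi(y)\,\rmd y$.

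Combining the symmetric and antisymmetric parts produces $\log D_\pm(x)=\frac{1}{2}\log P^{(x)}\mp\frac{1}{2}\int_0^x\xi(y)\,\rmd y$, and it remains only to insert the classical large-$x$ asymptotics of the Gaussian Unitary Ensemble sine-kernel determinant,
\eq
\log P^{(x)}=-\frac{x^2}{2}-\frac{\log x}{4}+\frac{\log 2}{12}+3\zeta'(-1)+o(1),\qquad x\to+\infty,
\endeq
whose constant term is the Widom--Dyson constant (conjectured in \cite{Dyson:1976}). Halving this expression reproduces exactly the first four terms of \eref{log-Dpm}, which completes the argument.

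The main obstacle I anticipate is not the algebra but the two inputs that are external to the two-fold integration. First, rigorously pinning the integration constants requires the small-$x$ expansions above, and one must be careful that the trace expansion of $\log\det(1-K_\pm^{(x)})$ is valid and that its $O(x^2)$ remainder does not affect the derivative at the origin. Second, and more seriously, the whole argument rests on the classical asymptotics of $\log P^{(x)}$, including the precise Widom--Dyson constant; this is the genuinely deep ingredient, whereas the passage from $\log P^{(x)}$ to $\log D_\pm(x)$ is the clean, elementary integration of the companion identities \eref{Q-d2dx2-Dpm}, \eref{Pxi}, and \eref{DpmPx}.
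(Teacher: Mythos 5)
Your proposal is correct and follows essentially the same route as the paper: both arguments reduce to the identity $\log D_\pm(x)=\frac{1}{2}\log P^{(x)}\mp\frac{1}{2}\int_0^x\xi(y)\,\rmd y$, pinned down by $D_\pm(0)=1$, the derivatives $(\log D_\pm)'(0)=-\frac{1}{\pi}(1\pm 1)$, and $\xi(0)=2/\pi$, and then quote the Dyson asymptotics of $\log P^{(x)}$. The only cosmetic difference is that you obtain the symmetric part in one line from the factorization $P^{(x)}=D_+D_-$, whereas the paper integrates each of the companion identities $(\log D_\pm)''=-\frac{1}{2}(\xi^2\pm\xi')$ twice and converts the double integral of $\xi^2$ into $-\log P^{(x)}-\frac{2}{\pi}x$ via $\xi^2=-(\log P^{(x)})''$.
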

\begin{proof}
From \cite{Deift:1997}, page 206 we have
\eq
\label{ddx-log-DpDm-x0}
\left.\frac{\rmd}{\rmd x}P^{(x)}\right|_{x=0} :=
\left.\frac{\rmd}{\rmd x}\left(\log D_+D_-\right)\right|_{x=0} = -\frac{2}{\pi}
\endeq
and
\eq
 \left.\frac{\rmd}{\rmd x}\left(\log \frac{D_+}{D_-}\right)\right|_{x=0} = -\frac{2}{\pi},
\endeq
and so{\footnote{ Equations (\ref{ddx-log-DpDm-x0}) follow also from the direct small-$x$ expansions of the 
Fredholm determinants $D_{\pm}(x)$, which can be easily obtained with the help of the identity
$$
\log D_{\pm} = \mbox{trace}\,\log \left(\mbox{1} - {\bf K}_\pm^{(x)}\right).
$$}}
\eq
\label{ddx-Dpm-at-0}
\left.\frac{\rmd}{\rmd x}\big(\log D_+\big)\right|_{x=0} = -\frac{2}{\pi} \quad \mbox{and} \quad \left.\frac{\rmd}{\rmd x}\big(\log D_-\big)\right|_{x=0} = 0.
\endeq
Integrating \eref{Q-d2dx2-Dpm} twice and using \eref{ddx-Dpm-at-0} and 
$D_\pm(0) = 1$ gives
\eqarr
\label{log-Dp}
\log D_+(x) & = & -\frac{1}{2}\int_0^x\int_0^y\xi^2(s)\rmd s\rmd y - \frac{1}{2}\int_0^x\int_0^y\xi'(s)\rmd s\rmd y  - \frac{2}{\pi}x, \\
\label{log-Dm}
\log D_-(x) & = & -\frac{1}{2}\int_0^x\int_0^y\xi^2(s)\rmd s\rmd y + \frac{1}{2}\int_0^x\int_0^y\xi'(s)\rmd s\rmd y.
\endeqarr
In view of (\ref{Pxi}), the integral involving $\xi^2(s)$ can be expressed in terms of the determinant $P^{(x)}$.
Indeed, taking into account the first equation in (\ref{ddx-log-DpDm-x0}) and the equation
$P_{0} = 1$,
we derive from (\ref{Pxi}) that
\eq\label{Pxxi}
\int_0^x\int_0^y\xi^2(s)\rmd s\rmd y +\frac{2}{\pi}x = - \log P^{(x)}.
\endeq
Simultaneously, from (\ref{vdef}) and (\ref{pVxi1}) it follows that $\xi(0) = 2/\pi$,
and hence
\eq
\label{double-integral-of-xi}
\int_0^x\int_0^y\xi'(s)\rmd s\rmd y = \int_0^x\xi(y)\rmd y - \frac{2}{\pi}x.
\endeq
Combining (\ref{log-Dp}), (\ref{log-Dm}), (\ref{Pxxi}), and (\ref{double-integral-of-xi}) we arrive at the
following representations for the logarithms of the determinants $D_{\pm}(x)$:
\eq\label{dpmdxi}
\log D_{\pm}(x) = \frac{1}{2}\log P^{(x)}  \mp \frac{1}{2} \int_0^x\xi(y)\rmd y.
\endeq

The asymptotic expansion
\eq\label{dyson}
\log P^{(x)} = -\frac{x^2}{2} - \frac{1}{4}\log x + \frac{\log 2}{12} + 3\zeta'(-1) + o(1) \mbox{ as } x\to+\infty
\endeq
was conjectured by Dyson \cite{Dyson:1976} and proven by three different 
methods by Krasovsky \cite{Krasovsky:2004}, Ehrhardt \cite{Ehrhardt:2006}, 
and Deift, Its, Krasovsky, and Zhou \cite{Deift:2007}.  Substituting (\ref{dyson}) into
(\ref{dpmdxi}) we obtain (\ref{log-Dpm}) and complete the proof of the Lemma.
\end{proof}
Now we compute $\int_0^x\xi(y)\rmd y$.  
\begin{lemma}
\label{U-sine-kernel-lemma}
We have
\eq
\label{U-sine-kernel}
U(0,x):=\int_0^x\xi(y)\rmd y = x - \frac{\log 2}{2} + o(1) \mbox{ as } x\to+\infty.
\endeq
\end{lemma}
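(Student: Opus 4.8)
The plan is to adapt the strategy of Theorem~\ref{pr-as-theorem} (see \eref{eq:Lax0}--\eref{P}) to the sine-kernel Riemann--Hilbert problem \eref{m-rhp}: produce a differential equation in $x$ for $m$, evaluate it at the spectral point $z=0$, and read off $U(0,x)=\int_0^x\xi$ from the resulting linear flow. Since the $x$-dependence of the jump in \eref{m-rhp} enters only through conjugation by $\rme^{\rmi xz\sigma_3}$, I first set $\Psi(z;x):=m(z;x)\rme^{\rmi xz\sigma_3}$. A direct computation shows that $\Psi$ has the $x$-independent jump $\bpm 0 & 1 \\ -1 & 2 \epm$ on $J$ and that $\Psi\rme^{-\rmi xz\sigma_3}\to I$ as $z\to\infty$. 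Because the jump is constant in $x$, the logarithmic derivative $(\partial_x\Psi)\Psi^{-1}$ has no jump across $J$; since the local exponents of $\Psi$ at the endpoints $\pm 1$ are likewise $x$-independent, the singularities there are removable and $(\partial_x\Psi)\Psi^{-1}$ is entire. Expanding $m=I+m_1(x)/z+O(z^{-2})$ at infinity gives $(\partial_x\Psi)\Psi^{-1}=\rmi z\sigma_3+\rmi[m_1,\sigma_3]+O(z^{-1})$, so by Liouville's theorem the error vanishes, and using \eref{xidef} one finds $\rmi[m_1,\sigma_3]=\xi\sigma_1$. Thus $\Psi$ solves the $x$-Lax equation
\eq
\frac{\partial}{\partial x}\Psi = (\rmi z\sigma_3 + \xi(x)\sigma_1)\Psi.
\endeq

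Evaluating at $z=0$, where the exponential factor is trivial, I set $P(x):=\Psi_+(0;x)=m_+(0;x)$ and obtain $\rmd P/\rmd x=\xi(x)\sigma_1 P$, exactly as in \eref{eq:Lax0}. Hence $P(x)=\rme^{U(0,x)\sigma_1}P(0)$ with $U(0,x)=\int_0^x\xi(y)\,\rmd y$. Pairing on the left with the row vector $(1,-1)$, a left eigenvector of $\sigma_1$ with eigenvalue $-1$, gives
\eq
(P(x))_{11}-(P(x))_{21} = \rme^{-U(0,x)}\big[(P(0))_{11}-(P(0))_{21}\big].
\endeq
To fix $P(0)$ I solve \eref{m-rhp} at $x=0$, where the jump is the constant unipotent matrix $V_0=I+N$ with $N=\bpm -1 & 1 \\ -1 & 1 \epm$ and $N^2=0$. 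This problem is solved explicitly by $m(z;0)=I+\frac{1}{2\pi \rmi}N\log\frac{z-1}{z+1}$ (branch cut on $J$, vanishing at infinity), which yields $P(0)=\bpm 1/2 & 1/2 \\ -1/2 & 3/2 \epm$ and, as a consistency check, reproduces $\xi(0)=2/\pi$ in agreement with \eref{vdef}--\eref{pVxi1}. In particular $(P(0))_{11}-(P(0))_{21}=1$, so the clean identity $\rme^{-U(0,x)}=(m_+(0;x))_{11}-(m_+(0;x))_{21}$ holds exactly; note that the alternative pairing with $(1,1)$ is degenerate here, so I must extract the exponentially \emph{small} combination.

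It remains to compute this combination as $x\to+\infty$ by the Deift--Zhou steepest-descent analysis of \eref{m-rhp} carried out in \cite{Deift:1997}. After the $g$-function transformation based on $\sqrt{z^2-1}$ (chosen $\sim z$ at infinity) and the opening of lenses, $m$ is reconstructed from the explicit outer parametrix built from $\gamma(z)=((z-1)/(z+1))^{1/4}$ together with the scalar factor $\rme^{\rmi x\sqrt{z^2-1}\,\sigma_3}$. At $z=0$ one has $\sqrt{z^2-1}=\rmi$, so this factor contributes $\mbox{diag}(\rme^{-x},\rme^{x})$---the source of the exponential---while $\gamma(0)=\rme^{\rmi\pi/4}$ gives $\gamma(0)+\gamma(0)^{-1}=\sqrt2$. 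Since the previous step already guarantees that $(m_+(0;x))_{11}-(m_+(0;x))_{21}=\rme^{-U(0,x)}$ decays, its leading behavior is the $\rme^{-x}$ term with coefficient $\sqrt2$, and I expect
\eq
(m_+(0;x))_{11}-(m_+(0;x))_{21} = \sqrt2\,\rme^{-x}\big(1+o(1)\big),
\endeq
whence $\rme^{-U(0,x)}=\sqrt2\,\rme^{-x}(1+o(1))$ and, taking logarithms, $U(0,x)=x-\frac{1}{2}\log2+o(1)$. No additive $2\pi\rmi$ ambiguity arises because $U(0,x)$ is real: $\log P^{(x)}$ and $\log D_\pm(x)$ are real, and \eref{dpmdxi} then forces $\int_0^x\xi$ to be real as well.

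The main obstacle is this last step: tracking the exact constant $\sqrt2$ (equivalently the $-\frac{1}{2}\log2$) through the steepest-descent transformations. This requires pinning down the branch of $\sqrt{z^2-1}$ and the value of the outer parametrix at $z=0$, and verifying that the endpoint (Bessel-type) parametrices at $\pm1$ contribute only to the $o(1)$ and do not corrupt the leading coefficient of the projected combination. A secondary, more routine, technical point is the removability of the singularities of $(\partial_x\Psi)\Psi^{-1}$ at $\pm1$ invoked in the first step, which follows from the $x$-independence of the local monodromy data.
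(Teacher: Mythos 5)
Your proposal is correct and follows essentially the same route as the paper: pass to $\psi=m\,\rme^{\rmi xz\sigma_3}$, use the $x$-Lax equation at $z=0$ to get $P(x)=\rme^{U(0,x)\sigma_1}P(0)$, compute $P(0)=\frac12\bpm 1&1\\-1&3\epm$ from the explicit $x=0$ solution, and extract the constant $\sqrt2$ from the Deift--Zhou outer parametrix with $g_+(0)=\rmi$, $\beta_+(0)=\rme^{\rmi\pi/4}$. The only differences are cosmetic (you rederive the Lax equation by Liouville and the $x=0$ solution via the nilpotent jump, where the paper cites \cite{Deift:1997}), and the steepest-descent step you flag as the remaining obstacle is exactly what the paper carries out, yielding $\lim_{x\to+\infty}R(x)\rme^{x\sigma_3}=\frac{\sqrt2}{2}\bpm 1&1\\-1&1\epm$ and hence the stated constant.
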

\begin{proof}
If $m(z)$ satisfies the Riemann-Hilbert problem \eref{m-rhp} then 
$\psi(z):=m(z)\rme^{\rmi xz\sigma_3}$ satisfies (see (4.11) and (4.30) 
in \cite{Deift:1997})
\eq
\frac{\rmd\psi}{\rmd x} = (\rmi z\sigma_3 + \xi\sigma_1)\psi.
\endeq
The function $\psi(z)$ solves the Riemann-Hilbert problem ((4.2) in \cite{Deift:1997}) 
\eq
\label{psi-rhp}
\cases{ \psi \mbox{ is analytic for } z\notin J \\ \psi_+ = \psi_-\bpm 0 & 1 \\ -1 & 2 
\epm \mbox{ on } \overline{J} \\ \psi(z)\rme^{-\rmi xz\sigma_3} = I + O\left(\frac{1}{z}\right) \mbox{ as } z\to\infty.}
\endeq
Set
\eq
R(x):=\psi_+(0;x).
\endeq
Since $R(x)$ satisfies the differential equation 
\eq
\frac{\rmd R}{\rmd x} = \xi\sigma_1 R
\endeq
we have
\eq
\label{R-ito-C}
R(x) = \rme^{U(0,x)\sigma_1}C = \bpm \cosh U & \sinh U \\ \sinh U & \cosh U \epm C
\endeq
for some constant matrix $C$.  Note $C=R(0)$, which can be computed exactly.  
From Lemma 4.5 in \cite{Deift:1997},
\eq
\psi(z;x=0) = \bpm 1 & 0 \\ 1 & 1 \epm \bpm 1 & \frac{\rmi}{2\pi}\log\left(\frac{z+1}{z-1}\right) \\ 0 & 1 \epm \bpm 1 & 0 \\ -1 & 1 \epm,
\endeq
where the principle branch of $\log\left(\frac{z+1}{z-1}\right)$ is chosen.  
Since $\displaystyle{\lim_{z\to 0^+}}\log\left(\frac{z+1}{z-1}\right) = -\rmi \pi$,
where the limit is taken from the upper half-plane, we have
\eq
C = R(0) = \lim_{z\to 0^+}\psi(z;x=0) = \frac{1}{2} \bpm 1 & 1 \\ -1 & 3 \epm.
\endeq
Hence
\eq
R(x) = \frac{1}{2}\bpm \rme^{-U(0,x)} & 2\rme^{U(0,x)} - \rme^{-U(0,x)} \\ -\rme^{-U(0,x)} & 2\rme^{U(0,x)} + \rme^{-U(0,x)} \epm.
\endeq
Now $\displaystyle{\lim_{x\to+\infty}}R(x)$ is analyzed via the nonlinear 
steepest-descent method for Riemann-Hilbert problems as in \cite{Deift:1997}.  
Define $g(z):=\sqrt{z^2-1}$ with branch cut $J$ and $g(z)\sim z$ as 
$z\to\infty$.  Then
\eq
f(z;x):=\psi(z;x)\rme^{-\rmi xg(z)\sigma_3}
\endeq
satisfies the Riemann-Hilbert problem
\eq
\label{f-rhp}
\cases{ f \mbox{ is analytic for } z\notin J \\ f_+ = f_-\bpm 0 & 1 \\ -1 & 2\rme^{2\rmi xg_+(z)} \epm \mbox{ on } \overline{J} \\ f(z) = I + O\left(\frac{1}{z}\right) \mbox{ as } z\to\infty.}
\endeq
Now since $\Im(g_+(z))>0$ on $J$, the jump matrix in \eref{f-rhp} decays 
to $\bpm 0 & 1 \\ -1 & 0 \epm$ as $x\to+\infty$.  Care must be taken because 
the decay is not uniform in $x$ near $\pm 1$.  Nevertheless, it is shown 
in \cite{Deift:1997} that $\displaystyle{\lim_{x\to+\infty}}f_+(0;x) = f_+^\infty(0)$, where $f^\infty(z)$ is independent of $x$ and satisfies the 
Riemann-Hilbert problem
\eq
\label{finf-rhp}
\cases{ f^\infty \mbox{ is analytic for } z\notin J \\ f^\infty_+ = f^\infty_-\bpm 0 & 1 \\ -1 & 0 \epm \mbox{ on } \overline{J} \\ f^\infty(z) = I + O\left(\frac{1}{z}\right) \mbox{ as } z\to\infty.}
\endeq
Define
\eq
\beta(z):=\left(\frac{z-1}{z+1}\right)^{1/4}
\endeq
with branch cut on $J$ and so that $\beta(z) = 1 + O(\frac{1}{z})$ as 
$z\to\infty$.  Then the Riemann-Hilbert problem \eref{finf-rhp} is solved by 
\eq
f^\infty(z) = \bpm \frac{1}{2}(\beta+\beta^{-1}) & \frac{1}{2\rmi }(\beta-\beta^{-1}) \\ -\frac{1}{2\rmi }(\beta-\beta^{-1}) & \frac{1}{2}(\beta+\beta^{-1}) \epm.
\endeq
Using $g_+(0) = \rmi$ and $\beta_+(0) = \rme^{\rmi \pi/4}$ gives
\eq
\eqalign{
\fl \lim_{x\to+\infty} R(x)\rme^{x\sigma_3} = \lim_{x\to+\infty}\lim_{z\to 0^+}\psi(z;x)\rme^{-\rmi xg(z)\sigma_3} \\ 
= \lim_{x\to+\infty}f_+(0) = f_+^\infty(0) = \frac{\sqrt{2}}{2}\bpm 1 & 1 \\ -1 & 1 \epm.}
\endeq
The $(11)$ entry of this equation shows
\eq
\rme^{-U(0,x) + x} = \sqrt{2} + o(1) \mbox{ as } x\to+\infty,
\endeq
and so 
\eq
\int_0^x\xi(y)\rmd y = x - \frac{1}{2}\log2 + 2\pi \rmi m + o(1) \mbox{ as } x\to+\infty
\endeq
for some $m\in\mathbb{Z}$.  Since the left-hand side of \eref{log-Dpm} is 
real, $m=0$, which establishes \eref{U-sine-kernel}.
\end{proof}
Taken together Lemmas \ref{log-Dpm-lemma} and \ref{U-sine-kernel-lemma} 
immediately prove Theorem \ref{sine-kernel-thm}.
Simultaneously, we have obtained the following Painlev\'e V analog of the Painlev\'e II  total-integrals theorems
of Sections \ref{sec:real} and \ref{sec:imaginary}.

\begin{thm}\label{th31} {\bf [A special fifth Painlev\'e transcendent]}  Suppose that 
$u(x)$ is a solution to the Painlev\'e V equation \eref{pV1}  characterized by the
Cauchy condition (\ref{ic1}), and let $\xi(x)$ be the function defined by $u(x)$
according to equation (\ref{pVxi1}).
Then
\begin{equation}
    \int_0^\infty  (1-\xi(y)) \rmd y = \frac{1}{2}\log 2.
\end{equation}
The function $\xi(x)$ can be alternatively defined by equation (\ref{pVxi2}) in terms of the solution
$\sigma(x)$ of the $\sigma$-version (\ref{pV2}) of the fifth Painlev\'e  equation characterized
by the initial condition (\ref{ic2}).
\end{thm}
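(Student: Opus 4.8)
The plan is to recognize that this theorem is essentially a repackaging of Lemma~\ref{U-sine-kernel-lemma}, which has already carried out all of the analytic work. That lemma establishes
\eq
U(0,x) = \int_0^x \xi(y)\,\rmd y = x - \frac{\log 2}{2} + o(1) \quad \mbox{as } x\to+\infty.
\endeq
First I would write $x = \int_0^x 1\,\rmd y$ and subtract, obtaining
\eq
\int_0^x \big(1 - \xi(y)\big)\,\rmd y = x - \int_0^x \xi(y)\,\rmd y = \frac{\log 2}{2} + o(1) \quad \mbox{as } x\to+\infty.
\endeq
The existence of the limit on the right-hand side is precisely what guarantees that the improper integral $\int_0^\infty (1-\xi(y))\,\rmd y$ converges; letting $x\to\infty$ then yields the stated value $\frac{1}{2}\log 2$ directly.

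The only delicate point — and it is already resolved inside Lemma~\ref{U-sine-kernel-lemma} — is the convergence of the integral, equivalently the statement that $\xi(y)\to 1$ sufficiently fast as $y\to\infty$. This is supplied by the nonlinear steepest-descent analysis of the Riemann-Hilbert problem \eref{psi-rhp} performed there: the $(11)$ entry of $\lim_{x\to+\infty}R(x)\rme^{x\sigma_3}$ is computed via the model solution $f^\infty$, which pins down $U(0,x)$ up to the ambiguous additive $2\pi\rmi m$ term, and reality of $\log D_\pm$ then forces $m=0$. Thus no new Riemann-Hilbert analysis is required here; the ``hard part'' has been discharged upstream, and what remains is a one-line algebraic consequence.

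Finally, the alternative characterization of $\xi(x)$ through the $\sigma$-form \eref{pV2} with initial data \eref{ic2} needs no separate argument. Equations \eref{pVxi1} and \eref{pVxi2} describe the same function $\xi(x)$, since both are tied to the single Riemann-Hilbert problem \eref{m-rhp} — namely $\xi$ through \eref{xidef} and $\sigma$ through \eref{sigmarh}, with the sign of $\xi$ in \eref{pVxi2} fixed by continuity and $\xi(0)=2/\pi>0$. Hence the conclusion is unchanged under either description, and the theorem follows at once.
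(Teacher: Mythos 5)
Your proposal is correct and is essentially identical to the paper's own argument: the paper states Theorem~\ref{th31} as an immediate by-product of Lemma~\ref{U-sine-kernel-lemma}, since $\int_0^x(1-\xi(y))\,\rmd y = x - U(0,x) = \tfrac{1}{2}\log 2 + o(1)$, with all the Riemann-Hilbert work (and the resolution of the $2\pi\rmi m$ ambiguity via reality of $\log D_\pm$) already discharged in that lemma. Your remarks on convergence and on the equivalence of the two characterizations of $\xi$ are accurate and consistent with the paper.
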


\section{The first integrals of the mKdV equation}
\label{mKdV}
The evaluation of the total integrals of the global solutions of  Painlev\'e 
equations performed in the previous sections was based on the analysis
of the solution $\Psi(\lambda;x)$ of the relevant Riemann-Hilbert problems at the point $\lambda =0$.
One can wonder then what would come (if anything) from the investigation of 
the higher terms of the expansion of the $\Psi$-function at $\lambda =\infty$.
It turns out that if we look at these terms then, instead of the
total integrals of the Painlev\'e functions themselves,
we will be able to evaluate explicitly the (properly regularized) total integrals of 
certain polynomials of $u$ and
its derivatives that play a central role in the theory of 
the modified Korteweg-de Vries (mKdV) equation
\begin{equation}\label{mkdv}
u_{t} -6u^2u_{x} +u_{xxx} = 0.
\end{equation}
We remind the reader (see \cite{Ablowitz:1977}) that the second Painlev\'e transcendents 
provide this equation with the important
class of self-similar solutions. Indeed, if $u(x)$ is a solution of the
Painlev\'e equation (\ref{pII}) then the formula
\eq \label{ptomkdv}
u(x,t) = \frac{1}{(3t)^{1/3}}u\left(\frac{x}{(3t)^{1/3}}\right),
\endeq
gives a solution of the mKdV equation (\ref{mkdv}). 

A fundamental fact about equation (\ref{mkdv}) is that it defines (for more detail see e.g. \cite{abcl}, 
\cite{fadtah}) on the proper functional spaces, e.g.
on the Schwartz space, an infinite-dimensional completely integrable Hamiltonian system
that possesses an infinite number of independent and commuting 
first integrals of the form
\eq \label{firstmkdv}
I_{n} \equiv \int_{-\infty}^{\infty} \alpha_{2n}\rmd x,\quad n =1,2,3...
\endeq
Here, each conserved density $\alpha_{k}$ is a polynomial of $u$ and its derivatives up
to the order $k$ that can be found explicitly  via the following recurrence relations:
\eq \label{alpha0}
\alpha_{0} = -\frac{\rmi}{2}u^2,
\end{equation}
\eq \label{alpha1}
\alpha_{1} = -\frac{1}{4}uu_{x},
\end{equation}
\eq \label{alpha2}
\alpha_{2} = \frac{\rmi}{8}\left( uu_{xx}-u^4 \right),
\end{equation}
\eq \label{alphan}
\alpha_{k+1} = \frac{\rmi}{2}\left(\frac{u_{x}}{u}\alpha_{k} - \frac{d\alpha_{k}}{\rmd x}
+\sum_{l,m \geq 0;\,\,\,l+m=k-1}\alpha_{l}\alpha_{m}\right),
\quad k \geq1.
\endeq
It can be observed that all  the $\alpha$'s with odd subscripts are total derivatives 
(cf. (\ref{alpha1})), hence the appearance of only $\alpha_{2n}$  in the description
of the nontrivial first integrals (\ref{firstmkdv}).
 
Suppose now that $u(x)$ is a global (for real $x$) solution of the Painlev\'e
equation (\ref{pII}). Then each $\alpha_{k}$ can be transformed to
a polynomial of $u$, $u_{x}$, and $x$,
\begin{equation*}
\alpha_{k} \equiv \alpha_{k}(u, u_{x},x).
\end{equation*}
Our aim in this section is to evaluate the properly regularized total integrals of
$\alpha_{k}(u, u_{x},x)$. Obviously, we need to concentrate on the 
$\alpha$'s with even subscripts only. The remarkable fact is that, when calculated
for the Painlev\'e functions, the $\alpha_{2n}$ become total derivatives
(of certain polynomials of $u$, $u_{x}$, and $x$)  as well.
This is well known in modern Painlev\'e theory (see, e.g., \cite{cjp} and 
\cite{mm}). 
We shall now outline the procedure of finding the relevant
antiderivatives. To this end let us recall the origin of the 
recurrence system (\ref{alpha0})-(\ref{alphan}).

Even before their emergence in soliton theory, the densities $\alpha_{k}(x)$ 
were very well known in the scattering theory
of the Dirac equation (\ref{Lax-x}) with a rapidly decaying potential
$u(x)$ (see e.g. \cite{LS} and earlier references therein). The 
densities $\alpha_{k}(x)$ appear in the so-called {\it{trace formulae}}
which equate  integrals (\ref{firstmkdv}) with the moments of  the logarithm of
the absolute value of the  transmission coefficient associated with the 
potential $u(x)$. We will discuss the trace formulae in more detail in
Section \ref{trace}. What is important for us in this section is the main ingredient
of the trace formulae derivation, i.e. the {\it Riccati equation} associated 
with the Dirac equation (\ref{Lax-x}). The Riccati equation appears after one transforms
the first order matrix differential  equation 
(\ref{Lax-x}) to the second order scalar differential equation for the 
entry $\Psi_{11}(\lambda;x)$,
\begin{equation}\label{scalareq}
\Psi_{11,xx} -\frac{u_{x}}{u}\Psi_{11,x} + \left(\lambda^2
-\rmi \lambda\frac{u_{x}}{u} - u^2\right)\Psi_{11} =0.
\end{equation}
If  we now write the function $\Psi_{11}(\lambda;x)$
in the form
\begin{equation}\label{Ldef}
\Psi_{11}(\lambda;x) = \exp\left( -\frac{4\rmi}{3}\lambda^3 -\rmi \lambda x
-L(\lambda;x)\right)
\end{equation}
and put
\begin{equation}\label{alphadef}
\alpha(\lambda;x) = L_{x}(\lambda;x),
\end{equation}
then this substitution indeed brings equation (\ref{scalareq})
to the  following Riccati type differential equation for $\alpha(\lambda;x)$:
\begin{equation}\label{riccati}
\frac{\rmd\alpha}{\rmd x} -2\rmi \lambda\alpha - \alpha^2 - \frac{u_{x}}{u}\alpha + u^2 =0.
\endeq 
Assume now that the function $\alpha(\lambda;x)$ admits the differentiable asymptotic
expansion
\eq \label{alphaseries}
\alpha(\lambda;x) \sim \sum_{k=0}^{\infty}\frac{\alpha_{k}(x)}{\lambda^{k+1}}
\quad \mbox{as}\quad \lambda \to \infty.
\endeq
This is certainly true in both cases of our interest, i.e. in the case of
the Schwartz function $u(x)$ and in the case of the Painlev\'e function $u(x)$.
The recurrence system (\ref{alpha0})-(\ref{alphan}) appears now as a result
of the substitution of the series (\ref{alphaseries}) into Riccati equation
(\ref{riccati}). 

Let us now expand the function $L(\lambda;x)$ in the neighborhood 
of $\lambda =\infty$,
\begin{equation}\label{Las}
L(\lambda;x) \sim \sum_{k=0}^{\infty}\frac{L_{k}(x)}{\lambda^{k+1}},
\quad \mbox{as}\quad \lambda \to \infty.
\endeq
We have that 
\eq \label{alphaL}
\alpha_{k}(x) = \frac{\rmd}{\rmd x}L_{k}(x).
\endeq
A principal point now is that, in the case of the Painlev\'e function $u(x)$,  all the
coefficients $L_{k}(x)$ are polynomials of $u$, $u_{x}$, and $x$. This important
fact follows from the possibility, in the Painlev\'e  case, of using 
the first equation  of the Lax pair (\ref{Lax-lambda})-(\ref{Lax-x}),
in addition to the second one, in order to study the asymptotic series (\ref{Las}).

Technically, it is more convenient  to start  with the asymptotic
series for the whole matrix function $\Psi(\lambda;x)$,
\begin{equation}\label{psiseries}
\Psi(\lambda;x)\rme^{\theta(\lambda;x)\sigma_{3}} \sim 
I + \sum_{k=1}^{\infty}\frac{m_{k}(x)}{\lambda^k}\quad \mbox{ as }\quad \lambda\to\infty.
\end{equation} 
The existence and differentiability of the series follows from the general
properties of the Riemann-Hilbert problem (\ref{rhp}) (see e.g. \cite{bik},
see also \cite{Fokas:2006-book}). Moreover,
the entries of the matrix coefficients $m_{k}$ are polynomials of 
$u \equiv 2(m_{1})_{12}$, $u_{x}$, and $x$. The recurrence procedure
that allows one to evaluate these polynomials is the following.

As in \cite{Jimbo:1981} (see also Chapter 1 of \cite{Fokas:2006-book}), we 
rewrite the formal series from (\ref{psiseries}) as
\begin{equation}\label{formal1}
I + \sum_{k=1}^{\infty}\frac{m_{k}}{\lambda^k}
\equiv \left(I + \sum_{k=1}^{\infty}\frac{F_{k}}{\lambda^k}\right)
\exp\left(\sum_{k=1}^{\infty}\frac{\Lambda_{k}}{\lambda^k}\right),
\end{equation}
where all the matrices $\Lambda_{k}$ and $F_{k}$ are assumed to
be diagonal and diagonal-free, respectively. Then, from the differential equation 
(\ref{Lax-lambda}) we easily get{\footnote{It is worth noticing that the polynomial
$$
H = \rmi(\Lambda_{1})_{11} \equiv \frac{1}{2}\left(u^{2}_{x} -xu^2 -u^4\right)
$$
is the Hamiltonian for Painlev\'e II equation (\ref{pII}) with respect to the usual choice 
of the canonical variables: $q = u$, $ p = u_{x}$. For more on the Hamiltonian
aspects of the theory of Painlev\'e  equations, which we feel should have a strong
relation to the topic of this paper, we refer the reader to the papers \cite{a17},
\cite{Jimbo:1981}, and \cite{harnad1}.}}
\begin{equation}\label{Fs}
\eqalign{
F_{1} = \frac{u}{2}\sigma_{1}, \quad F_{2} = -\frac{u_{x}}{4}\sigma_{2},
\quad F_{3} = -\frac{1}{4}\left(xu+u^3\right)\sigma_{1}, \\
F_{4} = \frac{1}{16}\left(u+xu_{x} +u^{2}u_{x}\right)\sigma_{2},
\quad \Lambda_{1} = \frac{\rmi}{2}(u^4 +xu^2 - u^{2}_{x})\sigma_{3},}
\end{equation}
and the recurrence relation for the rest of the coefficients,
\eq
\label{Fns}
\eqalign{
\fl 4\rmi[\sigma_{3}, F_{k+3}] -k\Lambda_{k} =  kF_{k}  -(2u_{x}\sigma_{1} -2\rmi u^2\sigma_{3})F_{k+1} \\
     -4u\sigma_{2}F_{k+2} + \sum_{l,m \geq 1;\,\,\,l+m=k}m\Lambda_{m}F_{l}, \quad k >1.}
\endeq
Note that taking the diagonal part of the last equation we determine $\Lambda_{k}$
for $k >1$
while the off-diagonal part yields $F_{k}$ for $k>4$. The coefficients $m_{k}$ of the original
series are determined, once again by recurrence, via the identity (\ref{formal1}).
Indeed,
\begin{equation}\label{m1}
\fl m_{1} = F_{1} + \Lambda_{1} = \frac{u}{2}\sigma_{1} +  \frac{\rmi}{2}(u^4 +xu^2 - u^{2}_{x})\sigma_{3},
\end{equation}
\eq
\label{m2}
\eqalign{
\fl m_{2} =  F_{2} + \Lambda_{2} + \frac{1}{2}\Lambda^{2}_{1} + F_{1} \Lambda_{1} \\
  = \frac{1}{8}\left(u^2 - (u^4+xu^2-u^{2}_{x})^2\right)I
-\frac{1}{4}\left(u_{x} - u(u^4+xu^2-u^{2}_{x})\right)\sigma_{2},}
\endeq
and so on.

Let us now come back to the series (\ref{Las}). The coefficients $L_{k}(x)$, which we have
been after,  are   recursively determined
from the already known $m_{k}(x)$ via the formal identity
\eq \label{formal10}
\exp\left(-\sum_{k=0}^{\infty}\frac{L_{k}(x)}{\lambda^{k+1}}\right)
\equiv 1 + \sum_{k=1}^{\infty}\frac{(m_{k}(x))_{11}}{\lambda^{k}}.
\endeq
It follows then that all coefficients $L_{k}(x)$ are indeed polynomials of 
$u$, $u_{x}$, and $x$. In particular, we have that
\begin{equation}\label{L1}
L_{0} = -(m_{1})_{11} =  -\frac{\rmi}{2}(u^4 +xu^2 - u^{2}_{x}),
\end{equation}
\begin{equation}\label{L2}
L_{1} =\frac{1}{2}L^{2}_{0} - (m_{2})_{11} =  -\frac{1}{8}u^2,
\end{equation}
\begin{equation}\label{L3}
L_{2} =-\frac{1}{6}L^{2}_{0} + L_{0}L_{1} - (m_{3})_{11}.
\end{equation}

Equation (\ref{alphaL}) tells us that  the polynomials 
$L_{k}(u,u_{x},x)$ defined by (\ref{formal10}) are the antiderivatives of
the polynomials $\alpha_{k}(u,u_{x},x)$ defined by (\ref{alpha0})-(\ref{alphan}).
In fact, only half of these relations - the ones corresponding to
even $k$'s - are of interest;  whereas the ones that correspond to odd $k$'s are
just identities (cf. (\ref{L2}) and (\ref{alpha1})). Hence, the polynomials
$L_{2n}(u,u_{x},x)$ are exactly the antiderivatives we have been
looking for.  We are ready now to
proceed with the evaluation of the regularized total integrals of $\alpha_{2n}(u,u_{x},x)$.

Let $u$ be either a purely real Ablowitz-Segur or Hastings-McLeod solution.
Then we can integrate (\ref{alphaL}) from $x$ to $+\infty$ and obtain
the relations
\eq \label{trace1}
\int_{x}^{+\infty} \alpha_{2n}(u,u_{y},y)\rmd y = -L_{2n}(u,u_{x},x),
\quad n =0,1,2,...
\endeq
In particular, the first relation reads{\footnote{Of course, equation (\ref{trace2})
can be checked by direct differentiation.}}
\eq \label{trace2}
\int_{x}^{+\infty}u^2(y)\rmd y =  u^2_{x} - xu^2 - u^4.
\endeq
Suppose now that  $u(x)$ is the Ablowitz-Segur solution (\ref{abseg})-(\ref{pr-as-cond}). 
Then to regularize the above integral at $x =-\infty$
we need to subtract  the term $-\beta/\sqrt{|y|}$ from $u^2(y)$. Simultaneously, 
the right hand side of (\ref{trace2}) satisfies the estimates
$$
u^2_{x} - xu^2 - u^4 = -2\beta |x|^{1/2} + o(1) \quad \mbox{as}\quad x \to -\infty.
$$
Therefore, for any $c\in\mathbb{R}$ we have that
\eq
\eqalign{
\fl \int_{c}^{+\infty}u^2(y)\rmd y + \int_{x}^{c}\left(u^2(y) + \frac{\beta}{\sqrt{|y|}}\right) \rmd y -2\beta \mbox{sgn}(c)|c|^{1/2} -2\beta |x|^{1/2} \\
    = u^2_{x} - xu^2 - u^4 \\
    = -2\beta |x|^{1/2} + o(1) \quad \mbox{as}\quad x \to -\infty.}
\endeq
Hence, we obtain the following total integral formula:
\begin{equation}\label{totalu2}
\int_{c}^{+\infty}u^2(y)\rmd y + \int_{-\infty}^{c}\left(u^2(y) + \frac{\beta}{\sqrt{|y|}}\right) \rmd y 
 =  2\beta \mbox{sgn}(c)|c|^{1/2}.
 \end{equation}
In the case of the Hastings-McLeod solution,  we need to subtract off the term $-y/2$
in order to make the integral convergent at  $x =-\infty$. The resulting total integral
relation reads
\begin{equation}\label{totalu3}
\int_{c}^{+\infty}u^2(y)\rmd y + \int_{-\infty}^{c}\left(u^2(y) + \frac{y}{2}\right) \rmd y 
 =  \frac{c^2}{4}
 \end{equation}
for any $c\in\mathbb{R}$.
Similar analysis can be performed with equation (\ref{trace1}) for any $n$, and it yields
the total integral relation of the form
\begin{equation}\label{totalu4}
\int_{c}^{+\infty}\alpha_{2n}(u,u_{y},y)\rmd y +
 \int_{-\infty}^{c}\left(\alpha_{2n}(u,u_{y},y) - \frac{\rmd F_{n}(y)}{\rmd y}\right) \rmd y 
 =  - F_{n}(c)
 \end{equation}
for any $c \leq 0$. Here, the function $F_{n}(x)$ is uniquely defined
by the asymptotic relation{\footnote{In the derivation of (\ref{totalu4}) we need  
the differentiability of the estimate (\ref{Fn}) which can be shown to be a consequence of the differentiability
of the basic asymptotics for the solution $u(x)$.}},
\begin{equation}\label{Fn}
L_{2n}(u(x),u_{x}(x),x) = F_{n}(x) + o(1) \quad \mbox{as} \quad x \to -\infty.
\end{equation} 
In particular,
\begin{equation}
F_{0}(x) = 
 \cases{
        2\beta (-x)^{1/2}\quad \mbox{for the Ablowitz-Segur solution}\\ 
       -x^2/4\quad \mbox{for the Hastings-McLeod solution}.}
\endeq

In order to explicitly write the regularizing function $F_{n}(x)$ 
for large values of the number $n$ one needs
to know more terms in the asymptotics of the solution $u(x)$. In the case of
the Ablowitz-Segur and Hastings-McLeod solutions these terms can be 
relatively easily obtained from the substitution of a-priori  ansatzs (whose existence is vouched 
for by the Riemann-Hilbert analysis) into the Painlev\'e equation (\ref{pII})
(see \cite{Deift:1995})). We also note that the total integral
formulae, which are similar to  (\ref{totalu2})-(\ref{totalu4}), can be
obtained for the case of the purely imaginary solutions $u(x)$ as well. In the generic
purely imaginary case one needs the regularization at $x =+\infty $ as well,
and to determine the higher terms of the relevant asymptotic expansions
is now a serious technical problem for large  values of the number $n$.  

There is an interesting feature in which equations (\ref{totalu2})-(\ref{totalu4}) differ from 
equations (\ref{pr-as-integral}), (\ref{hm-integral}), (\ref{pi-as-integral}), 
and (\ref{pi-sing-integral})  describing the total integrals of the function $u$ itself.
Let us combine in  all these equations the integral terms with the  terms generated by the regularization
procedure and use the symbol
$$
v.p\int_{-\infty}^{+\infty}
$$
to denote this combination. That is, we put
\begin{equation}\label{vp1}
\eqalign{
\fl v.p\int_{-\infty}^{+\infty}\alpha_{2n}(u,u_{y},y)\rmd y \\
\hspace{-.1in}:= \int_{c}^{+\infty}\alpha_{2n}(u,u_{y},y)\rmd y +
 \int_{-\infty}^{c}\left(\alpha_{2n}(u,u_{y},y) - \frac{\rmd F_{n}(y)}{\rmd y}\right) \rmd y 
+ F_{n}(c)}
\end{equation}
for the purely real Ablowitz-Segur and Hastings-McLeod solutions,
\begin{equation}\label{vp2}
\eqalign{
\fl v.p\int_{-\infty}^{+\infty}u(y)\rmd y \\
:= \int_{c}^{+\infty}u(y)\rmd y +
 \int_{-\infty}^{c}\left(u(y) - \rmi s_{1}\sqrt{\frac{|y|}{2}}\right) \rmd y 
+ is_{1} \frac{\sqrt{2}}{3}c|c|^{1/2}}
\end{equation}
for the Hastings-McLeod solution, and
\begin{equation}\label{vp3}
\eqalign{
\fl v.p\int_{-\infty}^{+\infty}u(y)\rmd y := \int_{-\infty}^{c}u(y)\rmd y \\ 
+ \int_{c}^{+\infty}\left(u(y) - \rmi\sigma\sqrt{\frac{y}{2}} +\rmi\sigma\frac{3\rho^2}{4y}\right) \rmd y 
- \rmi\sigma \frac{\sqrt{2}}{3}c^{3/2} + \rmi\sigma\frac{3\rho^2}{4}\log{c}}
\end{equation}
for the generic purely imaginary solution. The total integrals of the Ablowitz-Segur
solutions do not need any regularization.  Thus we have 
\begin{equation}\label{vp4}
v.p\int_{-\infty}^{+\infty}u(y)\rmd y := \int_{-\infty}^{+\infty}u(y)\rmd y
\end{equation}
for the Ablowitz-Segur solutions. The point we want to make now is that while
the regularized total integrals of the solutions themselves are nontrivial quantities 
depending on the solution integrated, the regularized total integrals
of the densities $\alpha_{2n}(u,u_{x},x)$ are all identically 
zero{\footnote{
The particular statement that $v.p\int_{-\infty}^{+\infty}u^2(x)\rmd x =0$ was 
first pointed out to the fourth author by J. B. McLeod; it plays an
important role in the analysis of the double scaling limit in the Hermitian
matrix models with quartic potential - see \cite{BI}.}}. The explanation
of this phenomenon lies in the fact that 
all the polynomials $\alpha_{2n}(u,u_{x},x)$ that have been calculated
for the Painlev\'e function $u(x)$  become the  total derivatives 
of other polynomials - the polynomials $L_{k}(u,u_{x},x)$. Therefore,
the evaluation of the total integrals of  $\alpha_{n}(u,u_{x},x)$ 
becomes rather trivial due to (very nontrivial!) fact that  we know the
global asymptotics of the Painlev\'e functions. In the case of the total
integral of the function $u$ itself the situation is much different.
The antiderivative of $u$ is actually given by equation (\ref{eq:Lax0})
and it is {\it not} a polynomial in $u$ and $u_{x}$. The antiderivative is 
explicit, but it is given in terms of the solution of the associated
Lax pair.  Therefore, in order to evaluate the integral of $u$ 
the knowledge of  its asymptotics is not enough. We have to
know the asymptotics of the associated $\Psi$-function.
The latter we extract from the asymptotic analysis of the
Riemann-Hilbert problem associated with the Painlev\'e 
equation{\footnote[1]{The evaluation of the integrals of $\alpha_{k}(u,u_{x},x)$
also needs the asymptotic solution of the Riemann-Hilbert problem.
Indeed, the global asymptotics of the function $u(x)$ necessary
for this evaluation are obtained from the solution of the
Riemann-Hilbert problem.}}(\ref{pII}).
\begin{remark} The vanishing of all regularized total integrals of $ \alpha_{2n}(u,u_{x},x)$,
which has been established in this section, perhaps can be also derived
using the meromorphicity of the second Painlev\'e functions and 
the fact that the densities $\alpha_{2n}(u(x),u_{x}(x),x)$ all have zero residues 
at the poles of $u(x)$ (Treves' type theorem \cite{treves}).
\end{remark}
\begin{remark}
As we have already seen, the evaluation of the total integral of the function $u(x)$
is a more difficult task than the evaluation of the total integrals of $\alpha_{2n}(u(x),u_{x}(x),x)$,
e.g. the evaluation of the $u^2(x)$. Even more difficult, though still possible
(\cite{Krasovsky:2004}, \cite{ Ehrhardt:2006},  \cite{Deift:2007},  \cite{Baik:2007}), is the
evaluation of the total integrals of the combination $xu^{2}(x)$  which, in 
case of  $u(x)$ being  the Hastings-McLeod solution,
appears in connection with the analysis of the Tracy-Widom distribution functions.
The integral mentioned is neither one of the polynomials $ \alpha_{2n}(u,u_{x},x)$ nor
it can be extracted from the behavior of the $\Psi$-function at $\lambda =0$.
Evaluation of this integral involves an extra discretization of the original Painlev\'e equation
and an extra Riemann-Hilbert analysis of certain Toeplitz (\cite{Baik:2007})
or Hankel (\cite{Deift:2007}) determinants. The corresponding answer reads
\eq\label{twzeta}
\eqalign{
\fl \int_c^{+\infty}yu^2(y)\rmd y+\int_{-\infty}^c\left(yu^2(y)+\frac{y^2}{2} + \frac{1}{8y}\right)\rmd y \\
 = \frac{c^3}{6} + \frac{1}{8}\log|c| - \frac{1}{8} +\frac{\log 2}{24}  +  \zeta'(-1).}
\endeq
A similar ``most difficult'' integral in the case of the solution $\sigma(x)$ of
the Painlev\'e V equation we dealt with in Section \ref{sine-kernel}
follows from the formulae (\ref{sinPV}) and (\ref{dyson}), and it reads
\eq\label{pVzeta}
\eqalign{
\fl \int_0^{c}\frac{\sigma(y)}{y}\rmd y+\int_{c}^{\infty}\left(\frac{\sigma(y)}{y}+y + \frac{1}{4y}\right)\rmd y \\
 = -\frac{c^2}{2} - \frac{1}{4}\log|c| +\frac{\log 2}{12}  +  3\zeta'(-1).}
\endeq
We bring the reader's attention to the appearance of the Riemann zeta-function
in both equations.
\end{remark}

\section{Trace formulae}
\label{trace}
Let $u(x)$ be a purely real Ablowitz-Segur solution of the Painlev\'e equation (\ref{pII}),
and let $L(\lambda;x)$ be as in (\ref{Ldef}) from Section \ref{mKdV}. From the 
asymptotic analysis performed in Section
\ref{sec:imaginary} the following estimate  for $L(\lambda;x)$ follows (cf. (\ref{new1}), (\ref{new2})):
\eq \label{ptrace1}
L\left(z(-x)^{1/2}; x\right) = -\mu \log\frac{z+1/2}{z-1/2} + O\left(\frac{1}{(-x)^{3/4}}\right),
\endeq
as $x \to -\infty$, uniformly for $|z| >1$. Here, we remind the reader that
\eq \label{ptrace2}
\mu = - \frac{1}{2\pi \rmi}\log\left(1 - |s_{1}|^2\right), \quad -1 < \rmi s_{1} < 1.
\endeq
Expanding both sides of (\ref{ptrace1}) over the negative powers of $z$ we 
arrive at the relations
\eq \label{ptrace3}
\frac{1}{(-x)^{\frac{2n+1}{2}}}L_{2n}(x) = \frac{1}{2\pi \rmi}\frac{1}{2^{2n}(2n+1)}\log\left(1-|s_{1}|^2\right)
+  O\left(\frac{1}{(-x)^{3/4}}\right),
\endeq
or, recalling (\ref{trace1}),
\eq \label{ptrace4}
\eqalign{
\fl \frac{1}{(-x)^{\frac{2n+1}{2}}}\int_{x}^{+\infty}\alpha_{2n}(u,u_{y},y)\rmd y
 = -\frac{1}{2\pi \rmi}\frac{1}{2^{2n}(2n+1)}\log\left(1-|s_{1}|^2\right)
+  O\left(\frac{1}{(-x)^{3/4}}\right) \\
\mbox{as}\quad x \to -\infty,\quad n =0,1,2,...}
\endeq

The left-hand side of (\ref{ptrace4}) suggests yet another way (comparing to the one
used in Section \ref{mKdV}) to regularize the total
integrals of $\alpha_{2n}$. Namely, we can put
\eq \label{reg2}
reg\int_{-\infty}^{+\infty}\alpha_{2n}(u,u_{y},y)\rmd y:=
\lim_{x \to -\infty}\left[\frac{1}{(-x)^{\frac{2n+1}{2}}}\int_{x}^{+\infty}\alpha_{2n}(u,u_{y},y)\rmd y\right].
\endeq
From (\ref{ptrace4}) it follows then that
\begin{equation}\label{ptrace5}
\eqalign{
\fl reg\int_{-\infty}^{+\infty}\alpha_{2n}(u,u_{y},y)\rmd y =
 -\frac{1}{2\pi \rmi}\frac{1}{2^{2n}(2n+1)}\log\left(1-|s_{1}|^2\right), \\ 
n = 0,1,2,...}
 \endeq
 
 There is a striking similarity of relations (\ref{ptrace5}) with the classical trace formulae
 of the theory of the Dirac operator (\ref{Lax-x}) with the potential $u(x)$ belonging to the
 Schwartz class. Indeed, the Dirac operator trace formulae are
 \begin{equation}\label{dtrace1}
\eqalign{
\fl \int_{-\infty}^{+\infty}\alpha_{2n}(u,u_{y}, u_{yy},...)\rmd y = -\frac{1}{2\pi \rmi}\int_{-\infty}^{\infty}
 \log\left(1-|r(\lambda)|^2\right)\lambda^{2n}d\lambda,\\ 
n =0,1,2,...,}
 \end{equation}
where the {\it reflection coefficient} $r(\lambda)$ is defined via  the  following
relation (see e.g. \cite{fadtah}; see also \cite{diz}),
\eq \label{dtrace2}
\Phi^{-1}_{-}(\lambda;x)\Phi_{+}(\lambda;x)
=\bpm 1-|r(\lambda)|^2 & -\overline{r(\lambda)} \\ r(\lambda) & 1 \epm,\quad \lambda \in \mathbb{R}, 
\endeq
where, in turn, $\Phi(\lambda;x)$ is a unique solution of the Dirac equation (\ref{Lax-x})
satisfying the conditions
\begin{equation}\label{dtrace3}
\Phi(\lambda;x)\,\,\, \mbox{is analytic in}\,\,\, \mathbb{C}\setminus\mathbb{R}
\endeq
and
\eq \label{dtrace4}
\Phi(\lambda;x)\rme^{\rmi \lambda x\sigma_{3}} \to I \quad \mbox{as}\quad \lambda \to \infty.
\endeq
In other words, $r(\lambda)$ is defined through the jump matrix of the Riemann-Hilbert
problem corresponding to the Dirac operator in the formalism of the inverse scattering
problem (for more detail see again \cite{fadtah}). Equations (\ref{ptrace5}) can be formally obtained
from the trace formulae (\ref{dtrace1}) by putting in the latter
$$
\log\left(1-|r(\lambda)|^2\right) \equiv \log\left(1-|s_{1}|^2\right)\theta\left(\frac{1}{4} - \lambda^2\right),
$$
where $\theta(s)$ is the Heaviside step function, and replacing simultaneously in the left hand side the
symbol $\int$ by the symbol $reg\int$. We also note that the interval $[-1/2, 1/2]$ plays
a central role in the asymptotic analysis of the Painlev\'e Riemann-Hilbert problem (\ref{rhp})
as $x \to -\infty$  - see Section \ref{sec:imaginary}.

The relation between the formulae (\ref{ptrace5}) and (\ref{dtrace1}) can be made less formal
if one observes that under the restrictions (\ref{abseg}) on the monodromy data corresponding to
the purely real Ablowitz-Segur case one can define the solution $\Phi(\lambda;x)$ of the
associated Dirac operator (\ref{Lax-x}) that would  {\it almost} have the properties
(\ref{dtrace3}) and (\ref{dtrace4}). Indeed, if we put
\begin{equation}\label{dtrace5}
\Phi(\lambda;x) = 
 \cases{
        \Psi_{2}(\lambda;x)\rme^{\frac{4\rmi}{3}\lambda^3\sigma_{3}}\,\,\, \mbox{for}\,\,\,\Im \lambda >0,\vspace{.1in}\\ 
       \Psi_{6}(\lambda;x)\rme^{\frac{4\rmi}{3}\lambda^3\sigma_{3}}\,\,\, \mbox{for}\,\,\,\Im \lambda <0,}
\endeq
then we would have that
\begin{equation}\label{dptrace3}
\Phi(\lambda;x)\,\,\, \mbox{is analytic in}\,\,\, \mathbb{C}\setminus\mathbb{R}
\endeq
and
\eq \label{dptrace4}
\Phi(\lambda;x)\rme^{\rmi \lambda x\sigma_{3}} \to I \quad \mbox{as}\quad \lambda \to \infty,
\quad \Im \lambda \neq 0
\endeq
(note the last inequality!). Simultaneously, in place of (\ref{dtrace2}) we get the equation
\eq \label{dptrace2}
\Phi^{-1}_{-}(\lambda;x)\Phi_{+}(\lambda;x)
=\bpm 1-|s_{1}|^2 & -\overline{s_{1}} \\ s_{1} & 1 \epm,\quad \lambda \in \mathbb{R}. 
\endeq
Hence we arrive at the {\it almost} non-formal identification,
\eq\label{rs}
r(\lambda) \equiv s_{1}.
\endeq

In conclusion, we note that the ``Painlev\'e trace formulae'' (\ref{ptrace5}) can be also
used to evaluate the total integrals $v.p.\int \alpha_{2n}\rmd x$ that we worked
out in Section \ref{mKdV}. However, as with the method of that Section, in order
to evaluate the integrals for large values of the number $n$ we need to find higher 
corrections to the estimate (\ref{ptrace4}).

\section{Appendix}
\label{appendix}
In this appendix we present an alternative to the  derivation of equation (\ref{Q-d2dx2-Dpm}) in \cite{Deift:1997}. Instead of using the spectral results of \cite{Dyson:1976}
we shall rely solely on the relation between the sine-kernel determinant
${\bf K^{(x)}}$ and the Riemann-Hilbert problem (\ref{m-rhp}).  This relation 
is based on the representation of the sine-kernel $K^{(x)}(z,z')$ in the 
``integrable form''{\footnote{The theory of  ``integrable integral operators''   was pushed forward
in \cite{IIKS} and built upon the ideas of \cite{JMMS}. It was further 
developed in \cite{TW}, \cite{HI}, \cite{deift}.  Some of the important elements of  the modern theory 
of integrable operators were already implicitly present in the earlier work \cite{Sakh}.}} (see e.g. Section 2 of \cite{Deift:1997})
\eq\label{intop}
K^{(x)}(z,z') = \frac{f^{T}(z)g(z')}{z-z'},\quad f^{T}(z)g(z) = 0,
\endeq
with the column vector-functions $f(z)$ and $g(z)$ defined by the equations
\eq\label{fgdef}
\eqalign{
 f(z)\equiv \Bigl(f_{1}(z),\, f_{2}(z)\Bigr)^{T} := \Bigl(\rme^{\rmi zx},\, \rme^{-\rmi zx}\Bigr)^{T}, \\
 g(z) \equiv \Bigl(g_{1}(z),\, g_{2}(z)\Bigr)^{T}:=  \frac{1}{2\pi \rmi}\Bigl(\rme^{-\rmi zx},\, -\rme^{\rmi zx}\Bigr)^{T}}
\endeq

A first key point is that  the kernel $R^{(x)}(z,z')$ of the
resolvent, ${\bf R^{(x)}}= (\mbox{1} - {\bf K^{(x)}})^{-1} - \mbox{1}
\equiv -(\mbox{1} - {\bf K^{(x)}})^{-1}{\bf K^{(x)}}$ has the same ``integrable''
structure  (see \cite{IIKS}, \cite{TW}, and Section 2 in \cite{Deift:1997}) as the one indicated
in (\ref{intop}), i.e.,
\eq\label{resolvent1}
R^{(x)}(z,z') = \frac{F^{T}(z)G(z')}{z-z'},
\endeq
where the components $F_{j}(z)$ and $G_{j}(z)$, $j =1,2$, of the column vector-functions $F(z)$ and $G(z)$
are defined by the relations{\footnote{It is shown in  \cite{Deift:1997}, Section 2, that the operator
$\mbox{1} - {\bf K^{(x)}}$ is indeed invertible for all positive $x$.}}
\eq\label{FGdef1}
F_{j} = \Bigl(\mbox{1} - {\bf K^{(x)}}\Bigr)^{-1}f_{j},\quad
G_{j} = \Bigl(\mbox{1} - {\bf (K^{(x)})^T}\Bigr)^{-1}g_{j},\quad j =1,2.
\endeq
In other words, the functions $F_{j}(z)$ and $G_{j}(z)$ are the solutions
of the integral Fredholm equations
\eq\label{Fint}
F_{j}(z) - \int_{-1}^{1}K^{(x)}(z,z')F_{j}(z')\rmd z' =  f_{j}(z), \quad j =1,2,
\endeq
and
\eq\label{Gint}
G_{j}(z) - \int_{-1}^{1}K^{(x)}(z',z)G_{j}(z')\rmd z' =  g_{j}(z), \quad j =1,2,
\endeq
respectively. It is worth noticing that in our concrete example of the integrable kernel
the following symmetry identities hold:
\eq\label{sym}
F_{1}(-z) = F_{2}(z),\quad F_{2}(-z) = F_{1}(z).
\endeq

The second principal observation is that the vector  functions $F(z)$ and $G(z)$ can 
be alternatively evaluated via the algebraic equations (see \cite{IIKS} and 
Section 2 of \cite{Deift:1997}) 
\eq\label{FGdef2}
F(z) = m_{\pm}(z)f(z),\quad G(z) = \Bigl(m^{T}_{\pm}(z)\Bigr)^{-1}g(z),
\endeq
where $m(z)$ is the solution of the Riemann-Hilbert problem posed on the
interval $[-1,1]$ with the jump matrix $V(z)$ defined by the formula
\eq\label{intjump}
V(z) = I - 2\pi \rmi  f(z)g^{T}(z).
\endeq
(We note that in virtue of the second equation in (\ref{intop}), $m_{+}(z)f(z) = m_{-}(z)f(z)$
and $ \Bigl(m^{T}_{+}(z)\Bigr)^{-1}g(z) =  \Bigl(m^{T}_{-}(z)\Bigr)^{-1}g(z)$.)
By a direct calculation, using (\ref{fgdef}), we see at once that in our case
\eq
V(z) = \bpm 0 & \rme^{2\rmi xz} \\ -\rme^{-2\rmi xz} & 2 \epm,
\endeq
hence $m(z)$ is exactly the solution of the Riemann-Hilbert problem
(\ref{m-rhp}). 

The last piece of the general theory that we will need is the inversion of
equations (\ref{FGdef2}), i.e., the formula expressing $m(z)$ in terms of
$F(z)$ (see again, e.g., Section 2 of \cite{Deift:1997}):
\eq\label{mFg}
m(z) = I - \int_{-1}^{1}F(z')g^{T}(z')\frac{\rmd z'}{z'-z},\quad z \notin [-1,1].
\endeq
From this equation it follows, in particular, that the matrix coefficient $m_{1}(x)$
in the expansion (\ref{mexp}) admits a representation in the form
\eq\label{m1int}
m_{1}(x) = \int_{-1}^{1}F(z)g^{T}(z)\rmd z.
\endeq

Consider now the determinants $D_{\pm}(x)$ and let us try to evaluate
their logarithmic derivatives with respect to $x$ following the same
line of arguments as presented on pages 167-168 of \cite{Deift:1997}.
We have
\eq\label{Kpmdif1}
\frac{\rmd}{\rmd x}\log D_{\pm}(x) = -\mbox{trace}
\left(\Bigl(\mbox{1} - {\bf K_\pm^{(x)}}\Bigr)^{-1}\frac{\rmd}{\rmd x}{\bf K_\pm^{(x)}}\right).
\endeq
A simple calculation shows that
\eq
\label{Kpmdif2}
\eqalign{
\fl \frac{\rmd}{\rmd x}{\bf K_\pm^{(x)}} = \frac{1}{\pi}\Bigl(\cos x(z-z') \pm \cos x(z+z')\Bigr)\\
= \rmi\Bigl(f_{1}(z) \pm f_{2}(z)\Bigr)g_{1}(z)
\mp \rmi\Bigl(f_{1}(z) \pm f_{2}(z)\Bigr)g_{1}(z).}
\endeq 
On the other hand, taking into account the symmetries (\ref{sym}), the integral equations (\ref{Fint}) can be rewritten
as
\eq\label{Fint1}
F_{1}(z) - \int_{0}^{1}K^{(x)}(z,z')F_{1}(z')\rmd z' - \int_{0}^{1}K^{(x)}(z,-z')F_{2}(z')\rmd z'  =  f_{1}(z)
\endeq
and
\eq\label{Fint2}
F_{2}(z) - \int_{0}^{1}K^{(x)}(z,z')F_{2}(z')\rmd z' - \int_{0}^{1}K^{(x)}(z,-z')F_{1}(z')\rmd z'  =  f_{2}(z).
\endeq
By summing and subtracting (\ref{Fint1}) and (\ref{Fint2}), we obtain the integral
equations for the combinations $\Bigl(F_{1}(z)\pm F_{2}(z)\Bigr)$:
\eq\label{Fint3}
\eqalign{
\fl \Bigl(F_{1}(z)\pm F_{2}(z)\Bigr) - \int_{0}^{1}K_\pm^{(x)}(z,z')\Bigl(F_{1}(z')\pm F_{2}(z')\Bigr)\rmd z' \\
= \Bigl(f_{1}(z)\pm f_{2}(z)\Bigr).}
\endeq
From these equations we read that
\eq\label{Fint4}
\Bigl(\mbox{1} - {\bf K_\pm^{(x)}}\Bigr)^{-1}(f_{1} \pm f_{2}) = F_{1} \pm F_{2}.
\endeq
Equations (\ref{Fint4}) and (\ref{Kpmdif2}) imply that the operator 
$\Bigl(\mbox{1} - {\bf K_\pm^{(x)}}\Bigr)^{-1}\frac{\rmd}{\rmd x}{\bf K_\pm^{(x)}}$ has kernel
$$
\rmi\Bigl(F_{1}(z)\pm F_{2}(z)\Bigr)g_{1}(z') \mp \rmi\Bigl(F_{1}(z)\pm F_{2}(z)\Bigr)g_{2}(z'),
$$
which yields the formula
\eq\label{Fint5}
\eqalign{
\fl \frac{\rmd}{\rmd x}\log D_{\pm}(x) \\ 
= -\rmi \int_{0}^{1}\Bigl(F_{1}(z)\pm F_{2}(z)\Bigr)g_{1}(z)\rmd z
\pm \rmi \int_{0}^{1}\Bigl(F_{1}(z)\pm F_{2}(z)\Bigr)g_{2}(z)\rmd z.}
\endeq
Taking into account  the symmetry relations (\ref{sym}) one more time (and similar
symmetries for $g_{1}(z)$ and $g_{2}(z)$) we rewrite (\ref{Fint5}) as
\eq\label{Fint6}
\frac{\rmd}{\rmd x}\log D_{\pm}(x) 
= -\rmi \int_{-1}^{1}F_{1}(z)g_{1}(z)\rmd z \pm \rmi\int_{-1}^{1}F_{1}(z)g_{2}(z)\rmd z.
\endeq 

With the help of the identity (\ref{m1int}), we transform (\ref{Fint6}) into the relation
\eq\label{Fint7}
\frac{\rmd}{\rmd x}\log D_{\pm}(x) 
= -\rmi \Bigl((m_{1}(x))_{11} \mp (m_1(x))_{12}\Bigr).
\endeq
To complete the proof of (\ref{Q-d2dx2-Dpm}) we only need to recall 
definition (\ref{xidef}) of the function $\xi(x)$ and notice that 
(\ref{pVxi2}) and (\ref{sigmarh}) lead to the equation (cf. (4.33) of \cite{Deift:1997})
\eq\label{m11m12}
2\rmi \frac{\rmd}{\rmd x}(m_{1}(x))_{11} = \xi^2(x).
\endeq

{}

\ack The work of the first author was supported in part 
by NSF Grants \# DMS-0457335 and DMS-0757709.  The second and third 
authors were supported in part by NSF Focused Research Group Grant 
\# DMS-0354373.  The work of the fourth author was supported in part by NSF 
Grants \# DMS-0401009 and DMS-0701768.

\section*{References}

\bibliographystyle{alpha}

\begin{thebibliography}{99}

\bibitem{abcl}
Ablowitz M and Clarkson P 1991 {\it Solitons, Nonlinear
Evolution Equations and Inverse Scattering} ({\it London Math. Soc.
Lecture Notes Series} vol 149) (Cambridge: Cambridge University Press)

\bibitem{Ablowitz:1977}
Ablowitz M and Segur H 1977 Exact linearization of a Painlev\'e transcendent
\textit{Phys. Rev. Lett.}
\textbf{38}
1103--1106

\bibitem{Abramowitz:1965-book}
Abramowitz M and Stegun I 1965
\textit{Handbook of Mathematical Functions}
(New York: Dover Publications)

\bibitem{Baik:2007}
Baik J, Buckingham R and DiFranco J
2008
Asymptotics of Tracy-Widom distributions and the total integral of a 
Painlev\'e II function
\textit{Comm. Math. Phys.}
\textbf{280}
463--497

\bibitem {BI} 
Bleher P and  Its A 
2003
Double scaling limit in the random matrix model:  the Riemann-Hilbert approach 
{\it Comm. Pure Appl. Math.}
\textbf{56}
0433--0516
   
\bibitem{bik} 
Bolibruch A, Its A and Kapaev A  
2004
On the Riemann-Hilbert-Birkhoff inverse monodromy problem and the Painlev\'e equations 
{\it Algebra and Analysis}
{\bf 16}
121--162

\bibitem{cjp} 
Clarkson P, Joshi N and Pickering A 
1999
B\"acklund transformation for the second Painlev\'e hierarchy: a modified truncation approach
{\it Inverse Problems} 
{\bf 15} 
175--187

\bibitem{Deift:1998-book}
Deift P
1998
\textit{Orthogonal Polynomials and Random Matrices:  a Riemann-Hilbert Approach}
(Providence: American Mathematial Society)

\bibitem{deift} 
Deift P 
1999
Integrable operators
{\it Differential Operators and Spectral Theory}  
({\it Amer. Math. Soc. Transl., Ser. 2} vol 189) 
(Providence:  American Mathematical Society)

\bibitem{Deift:2007}
Deift P, Its A, Krasovsky I and Zhou X
2007
The Widom-Dyson constant for the gap probability in random matrix theory.
\textit{J. Comput. Appl. Math.}
\textbf{202}
26--47

\bibitem{diz} 
Deift P, Its A and Zhou X 
1993
Long-time asymptotics for integrable nonlinear wave equations
{\it Important Developments in Soliton Theory} 
ed A Fokas and V Zakharov
(Berlin:  Springer-Verlag)
pp~181--204

\bibitem{Deift:1997}
Deift P, Its A and Zhou X
1997
A Riemann-Hilbert approach to asymptotic problems arising in the theory of random matrix models, and also in the theory of integrable statistical mechanics.
\textit{Ann. of Math.}
\textbf{146}
149--235

\bibitem{Deift:1994}
Deift P and Zhou X
1994
{\it Long-time behavior of the non-focusing nonlinear Schr\"odinger equation - a case study}
({\it New series, Lectures in Mathematical Sciences} vol 5)
(Tokyo)

\bibitem{Deift:1995}
Deift P and Zhou X
1995
Asymptotics for the Painlev\'e II equation
\textit{Comm. Pure Appl. Math.}
\textbf{48}
277--337

\bibitem{deift2} 
Deift P and Zhou X
1995
Long-time asymptotics for integrable systems. Higher order theory.
{\it Comm.\ Math.\ Phys.} 
{\bf 165} 
175--191

\bibitem{Dyson:1976}
Dyson F
1976
Fredholm determinants and inverse scattering problems
\textit{Comm. Math. Phys.}
\textbf{47}
171--183

\bibitem{Ehrhardt:2006}
Ehrhardt T
2006
Dyson's constant in the asymptotics of the Fredholm determinant of the sine kernel
\textit{Comm. Math. Phys.}
\textbf{262}
317--341

\bibitem{Ehrhardt:2007}
Ehrhardt T
2007
Dyson's constants in the asymptotics of the determinants of Wiener-Hopf-Hankel operators with the sine kernel
\textit{Comm. Math. Phys.}
\textbf{272}
683--698

\bibitem{fadtah} 
Faddeev L and Takhtadjan L 
1987
{\it Hamiltonian methods in the theory of solitons}
(Berlin: Springer Verlag)

\bibitem{Flaschka:1980}
Flaschka H and Newell A
1980
Monodromy- and spectrum-preserving deformations I
\textit{Comm. Math. Phys.}
\textbf{76}
65--116

\bibitem{Fokas:1983}
Fokas A and Ablowitz M
1983
On the initial value problem of the second Painlev\'e transcendent
\textit{Comm. Math. Phys.}
\textbf{91}
381--403

\bibitem{Fokas:2006-book}
Fokas A, Its A, Kapaev A and Novokshenov V
2006
\textit{Painlev\'e Transcendents:  The Riemann-Hilbert Approach}.
({\it American Mathematical Society Mathematical Surveys and Monographs} vol 128) (Providence: American Mathematical Society)

\bibitem{HI} 
Harnad J and Its A  
2002
Integrable Fredholm operators and dual isomonodromic deformations 
{\it Comm. Math. Phys.} 
{\bf 226}
497--530

\bibitem{harnad1}
Harnad J and Wisse M  
1993
Loop algebra moment maps and Hamiltonian models for the Painlev\'e transcendants
{\it Fields Inst. Commun.}
{\bf 7} 
155--169

\bibitem{Hastings:1980}
Hastings S and McLeod J
1980
A boundary value problem associated with the second Painlev\'e transcendent 
and the Korteweg-de Vries equation
\textit{Arch. Rational Mech. Anal.}
\textbf{73}
31--51

\bibitem{IIKS} 
Its A, Izergin A, Korepin V and Slavnov N
1990
Differential equations for quantum correlation functions
{\it J. Mod. Phys. B} 
{\bf 1003}; 
1990
{\it Proc. of the Conf. on Yang-Baxter Equations, Conformal Invariance and Integrability in Statistical Mechanics and Field Theory, CMA, ANU (Canberra, Australia), July 10-14, 1989} 
eds M Barber and P Pearce 
(World Scientific)
pp 303--338

\bibitem{Its:1988}
Its A and Kapaev A
1987
The method of isomonodromy deformations and connection formulas for the second Painlev\'e transcendent
\textit{Izv. Akad. Nauk SSR Ser. Mat.}
\textbf{51}
78 (Russian);
1988 \textit{Math. USSR-Izv.}
\textbf{31}
193--207 (English)

\bibitem{Its:1986-book}
Its A and Novokshenov V
1986
\textit{The Isomonodromic Deformation Method in the Theory of Painlev\'e Equations}
({\it Lecture Notes in Mathematics} vol 1191) 
(Berlin: Springer-Verlag)

\bibitem{JM}
Jimbo M and Miwa T  
1981
Monodromy preserving deformation of linear
ordinary differential equations with rational coefficients II.
\textit{Physica D}  
{\bf 2} 
407--448

\bibitem{JMMS}
Jimbo M, Miwa T, M\^ori Y and Sato M  
1980
Density matrix of impenetrable bose gas and the fifth Painlev\'e transcendent
{\it Physica D}
{\bf 1}
80--158

\bibitem{Jimbo:1981}
Jimbo M, Tetsuji M and Ueno K
1981
Monodromy preserving deformation of linear ordinary differential equations with rational coefficients
\textit{Physica D}
\textbf{2}
306--362

\bibitem{kapaev:thesis}
Kapaev A  
1987
Asymptotics of the second Painlev\'e transcendents, PhD thesis, Leningrad University

\bibitem{kapaev91}
Kapaev A
1991
Irregular singular point of the second Painlev\'e function and the nonlinear Stokes phenomenon
{\it Zap.\ Nauch.\ Semin.\ LOMI}
{\bf 187}
139--170

\bibitem{kapaev:P2}
Kapaev A
1992
Global asymptotics of the second Painlev\'e transcendent 
{\it Phys.\ Lett.\ A}
{\bf 167}
356--362

\bibitem{Krasovsky:2004}
Krasovsky I
2004
Gap probability in the spectrum of random matrices and asymptotics of polynomials orthogonal on an arc of the unit circle
\textit{Int. Math. Res. Not.}
\textbf{25}
1249--1272

\bibitem{LS} 
Levitan B and Sargsyan I 
1988
Operatory ShturmaÐLiuvillya i Diraka. M.: Nauka

\bibitem{mm} 
Mazzocco M and Mo M  
2007
The Hamiltonian structure of the second Painlev\'e hierarchy
\textit{Nonlinearity}
\textbf{20}
2845-–2882

\bibitem{Me}
Metha M
1994
{\it Random Matrices, 6th edition} 
(San Diego: Academic Press)

\bibitem{a17}
Okamoto K 
1980
Polynomial Hamiltonians associated with Painlev\'e equations II. Differential equations satisfied by polynomial Hamiltonians 
{\it Proc.\ Japan Acad.\ Ser.\ A Math.\ Sci.},
{\bf 56}
367--371

\bibitem{Sakh}  
Sakhnovich L 
1968
Operators similar to unitary operators 
{\it Functional Anal.and Appl.}  
{\bf 2} 
48--60

\bibitem{Segur:1977}
Segur H and Ablowitz M
1977
Asymptotic solutions of the Korteweg de Vries equation
\textit{Stud. Appl. Math.}
\textbf{571}
13--44

\bibitem{Segur:1981}
Segur H and Ablowitz M
1981
Asymptotic solutions of nonlinear evolution equations and a Painlev\'e transcendent
\textit{Physica D}
\textbf{3}
165--184

\bibitem{TW3}
Tracy C and Widom H 
1993
Introduction to random matrices 
{\it Geometric and Quantum Aspects of Integrable Systems}
({\it Lect. Notes in Phys.} vol 424) ed G Helminck
pp 103--130

\bibitem{TW} 
Tracy C and Widom H  
1994
Fredholm determinants, differential equations and matrix models 
{\it Comm.\ Math.\ Phys.\ } 
{\bf 163}
33--72

\bibitem{Tracy:1994}
Tracy C and Widom H 
1994
Level-spacing distributions and the Airy kernel.
\textit{Comm. Math. Phys.} 
\textbf{159} 
151--174

\bibitem{Tracy:1996}
Tracy C and Widom H
1996
On orthogonal and symplectic matrix ensembles
\textit{Comm. Math. Phys.}
\textbf{177}
727--754

\bibitem{treves} 
Treves F 
2001
An algebraic characterization of the Korteweg-de Vries hierarchy
{\it Duke Math. Journal}  
{\bf 108} 
251-294

\bibitem{Whittaker:1927-book}
Whittaker E and Watson G
1927
\textit{A Course of Modern Analysis}
(Cambridge:  Cambridge University Press)
\end{thebibliography}

\end{document}